\documentclass[11pt]{article}
\usepackage[margin=1in]{geometry}
\usepackage{amsmath,amsthm,amssymb,mathtools}
\usepackage{booktabs}
\usepackage{microtype}
\usepackage{pgfplots}
\pgfplotsset{compat=1.17}
\definecolor{cblue}{RGB}{0,114,178}
\definecolor{cverm}{RGB}{213,94,0}
\definecolor{cblgr}{RGB}{0,158,115}
\definecolor{crp}{RGB}{204,121,167}
\usepackage{enumitem}

\newtheorem{theorem}{Theorem}[section]
\newtheorem{lemma}[theorem]{Lemma}
\newtheorem{proposition}[theorem]{Proposition}
\newtheorem{corollary}[theorem]{Corollary}

\theoremstyle{definition}
\newtheorem{definition}[theorem]{Definition}

\theoremstyle{remark}
\newtheorem{remark}[theorem]{Remark}

\DeclareMathOperator{\zer}{zer}
\DeclareMathOperator{\dom}{dom}
\DeclareMathOperator{\aff}{aff}
\DeclareMathOperator{\conv}{conv}
\DeclareMathOperator{\dist}{dist}

\newcommand{\Hh}{\mathcal{H}}
\newcommand{\Aa}{\mathcal{A}}
\newcommand{\R}{\mathbb{R}}
\newcommand{\C}{\mathbb{C}}
\newcommand{\inner}[2]{\langle #1, #2 \rangle}
\newcommand{\norm}[1]{\| #1 \|}
\newcommand{\abs}[1]{| #1 |}

\title{Anderson acceleration of the proximal point method: the exact adaptive minimax, a spectral phase transition, and optimal safeguarding}
\author{Zheng Jia\footnote{School of Statistics and Data Science,
Jilin University of Finance and Economics, Changchun 130117, China;
e-mail: 109012@jlufe.edu.cn}
\and Yekini Shehu\footnote{(Corresponding Author) School of Mathematical Sciences,
Zhejiang Normal University, Jinhua 321004, China;
e-mail: yekini.shehu@zjnu.edu.cn}
\and Yonghong Yao\footnote{School of Mathematical Sciences,
Tiangong University, Tianjin 300387, China;
e-mail: yyhtgu@hotmail.com}}

\date{\today}

\begin{document}
\maketitle

\begin{abstract}
\noindent
We study residual-polynomial acceleration of the proximal point method (PPM)
$y_{k+1}=(I+M)^{-1}y_k$ for maximal monotone inclusions $0\in M(x)$, with
Anderson acceleration (AA) as the prototypical adaptive scheme, and we ask
three questions. (i)~\emph{How much} can any method that forms affine
combinations of resolvent outputs---any memory, any adaptivity, any
safeguarding---improve on the tight $\Theta(d_0/\sqrt{k})$ residual rate of
PPM? (ii)~\emph{When}, i.e.\ for which spectral structure of the resolvent,
is such an improvement possible? (iii)~\emph{What does safeguarding cost}?
We answer all three exactly. First, the minimax complexity over the full
adaptive class is \emph{exactly} $d_0/(K+1)$ per $K$ resolvent evaluations.
The upper bound is elementary: the averaged-reflection estimator
$\hat y_K=\frac1{K+1}\sum_{j=0}^{K}(2J-I)^jy_0$ attains
$\norm{r(\hat y_K)}\le d_0/(K+1)$ for \emph{every} maximal monotone $M$ by a
one-line telescoping identity. The matching lower bound uses an explicit
skew-adjoint instance---the resolvent with eigenvalues $(1+\omega_j)/2$ at
the $K+1$ roots $\omega_j$ of $u^{K+1}=-1$, with masses
$w_j\propto\csc^2((2j+1)\pi/(2K+2))$---on which \emph{every} degree-$K$
polynomial method, adaptive or not, satisfies
$\norm{r(y_K)}\ge 1/(K+1)$. The instance minimax equals $1/(K+1)$; the
instance-optimal polynomial is \emph{uniquely} the Fej\'er kernel, so on
the hardest instance full-memory AA collapses to the non-adaptive averaged
reflection; and the same instance certifies the per-step floor
$\norm{r(y_k)}\ge1/\sqrt{(K+1)(k+1)}$ for every $k\le K$. The proof reduces
the adaptive lower bound to a Christoffel-function computation dual to the
circle Chebyshev problem
$\min_{P(1)=1}\max_{\abs u=1}\abs{(u-1)P(u)}=2/(K+1)$, certified by the
Lagrange identity $\ell_j(1)=-2\omega_j/((K+1)(1-\omega_j))$ and the
classical sum $\sum_j\csc^2((2j+1)\pi/(2K+2))=(K+1)^2$. This closes the
factor-two gap left by Bernstein-type bounds and removes the $\log K$
slack of rotation-spectrum instances: the log is an artifact of
$\csc$-distributed masses (subcritical exponent $1$), while the extremal
measure has the critical exponent $2$ and concentrates $\to 8/\pi^2$ of
its mass at the two nodes nearest $u=1$; on the rotation instance we
retain the exact instance minimax $1/\Lambda_K$,
$\Lambda_K=(\frac2\pi+o(1))K\log K$, via the identity
$\ell_j(1)=-\omega^j$. The exact value $d_0/(K+1)$ complements the
optimal-complexity result of Park--Ryu for span and deterministic oracle
classes by identifying the unique extremal spectral measure and the
per-step floors in closed form. Second, the two sides meet at a
\emph{sharp phase transition} with now-exact constants: writing
$s=\dist(1,\sigma(L))$, a Jackson-kernel polynomial attains
$\norm{r(y_K)}=O(d_0/(K^2s))$ whenever $sK\to\infty$, while at the
critical floor $s=\sin(\pi/(2K+2))$ the extremal instance has instance
minimax exactly $1/(K+1)$; the critical spectral scale is $s\asymp1/K$.
The picture extends beyond rotation spectra: the phase transition holds
verbatim for \emph{normal} maximal monotone operators, and the minimax
value over the genuinely nonlinear family $M=S+N_C$ ($S$ skew-adjoint,
$C$ closed convex) is again exactly $d_0/(K+1)$. Third, safeguarding: on
linear problems AA-PPM needs no safeguard at all (its residuals decrease
automatically), while on nonlinear problems certification of the
$O(1/k)$ envelope requires exactly two oracle evaluations per iteration,
and we prove a matching optimality statement: no rule based on window
history alone can certify any rate. Finally we correct and complete the
positive theory for structured problems---affine operators (finite
termination), strong monotonicity (linear rates), piecewise-affine
operators, and H\"olderian growth, where the correct trichotomy is:
superlinear convergence of order $1/(q-1)$ for growth exponent $q<2$,
linear for $q=2$, and $d_k=O(k^{-1/(q-2)})$,
$\norm{r_k}=O(k^{-(q-1)/(q-2)})$ for $q>2$, sharp for $f(x)=\abs{x}^q/q$.
Numerical experiments confirm every prediction to machine precision,
including the instance floor $1/(K+1)$ attained by the averaged reflection
at every step on the extremal instance.
\end{abstract}

\noindent\textbf{Keywords.} proximal point method, Anderson acceleration,
maximal monotone operator, fixed-point residual, lower bounds, Chebyshev
polynomials, Jackson kernel, safeguarding, H\"olderian growth, minimax
complexity

\medskip
\noindent\textbf{MSC 2020.} 47H05, 47J25, 65K05, 65B05, 90C25, 68Q25

\section{Introduction}

The proximal point method (PPM),
\begin{equation}\label{eq:ppm}
y_{k+1}=J(y_k),\qquad J=(I+M)^{-1},
\end{equation}
for a maximal monotone operator $M:\Hh\rightrightarrows\Hh$ on a real Hilbert
space $\Hh$ is the conceptual parent of a large part of first-order
optimization: splitting methods (Douglas--Rachford, ADMM, PDHG) are PPM-type
iterations on tailored monotone operators \cite{rockafellar1976mono,
rockafellar1976aug,eckstein1992douglas}. Its last-iterate fixed-point
residual $r(y_k)=J(y_k)-y_k$ satisfies the tight estimate
\begin{equation}\label{eq:guyang}
\norm{r(y_k)}\le \frac{d_0}{\sqrt{k+1}}\,(1+o(1)),
\end{equation}
and the rate $\Theta(d_0/\sqrt{k})$ cannot be improved for the bare iteration
\cite{guyang2020tight,bravo2019sharp}. A natural question, raised with force
by Kim's accelerated proximal point method (APPM) \cite{kim2021accelerated},
which achieves $\norm{r(y_k)}=O(d_0/k)$, is:

\begin{quote}
\emph{How much acceleration of the residual is possible, for which problems,
and at what oracle cost?}
\end{quote}

Anderson acceleration (AA) \cite{anderson1965iterative,walker2011anderson}
is the most widely used adaptive device for this purpose: it forms the next
iterate as the affine combination of the last $m+1$ resolvent outputs that
minimizes the combined residual, and is equivalent to (restricted) GMRES on
linear problems \cite{walker2011anderson,evans2020proof}. Applied to
\eqref{eq:ppm} it is empirically excellent on some problems and inert on
others, and the literature offers no sharp explanation of either regime;
safeguarding heuristics \cite{zhang2020globally} are used to guarantee at
least the PPM rate, with unclear oracle accounting. This paper resolves the
question on the rotation-spectrum class, which is where the worst case of
\eqref{eq:guyang} lives, and draws the complete complexity map.

\subsection{Contributions}

We work in the class $\Aa_\infty$ of \emph{residual-polynomial methods}:
schemes that produce $y_{k+1}$ as an affine combination of resolvent outputs,
with coefficients that may depend arbitrarily on the observed history
(Definition~\ref{def:class}). AA of any memory, Kim's APPM, Chebyshev
schedules, and the averaged-reflection estimator all belong to this class.

\begin{enumerate}[label=\textbf{C\arabic*.},leftmargin=3.2em]
\item \textbf{Polynomial reduction and per-step optimality}
      (Section~\ref{sec:poly}). On linear problems every trajectory of every
      method in $\Aa_\infty$ satisfies $y_k=Q_k(L)y_0$ with $Q_k(1)=1$,
      $\deg Q_k\le k$ (Lemma~\ref{lem:polyred}); AA is per-step optimal in
      its class and, consequently, \emph{on linear problems AA-PPM residuals
      decrease automatically}---no safeguard is needed
      (Corollary~\ref{cor:linsafe}).
\item \textbf{The exact adaptive minimax: $d_0/(K+1)$}
      (Section~\ref{sec:adaptive}). We exhibit an explicit skew-adjoint
      instance---resolvent eigenvalues $(1+\omega_j)/2$ at the $K+1$ roots
      $\omega_j$ of $u^{K+1}=-1$, masses
      $w_j=(K+1)^{-2}\csc^2((2j+1)\pi/(2K+2))$---on which \emph{every}
      degree-$K$ polynomial method, any memory, adaptivity, safeguarding,
      or clairvoyance, satisfies
      \begin{equation}\label{eq:adaptive}
      \norm{r(y_K)}\ \ge\ \frac1{K+1},
      \end{equation}
      and the averaged-reflection estimator attains equality: the instance
      minimax is exactly $1/(K+1)$, the optimal polynomial is
      \emph{uniquely} the Fej\'er kernel, and the same instance certifies
      the per-step floor $\norm{r(y_k)}\ge1/\sqrt{(K+1)(k+1)}$ for all
      $k\le K$ (Theorem~\ref{thm:exactminimax}). Consequently the minimax
      value over $\Aa_\infty$, and equally over the non-adaptive
      schedules, is exactly $d_0/(K+1)$ (Corollary~\ref{cor:exactV}): the
      factor-two window left open by Bernstein-type bounds is closed, and
      on linear worst cases adaptivity and memory buy nothing. The proof
      rests on the Lagrange identity
      $\ell_j(1)=-2\omega_j/((K+1)(1-\omega_j))$ and the classical sum
      $\sum_j\csc^2((2j+1)\pi/(2K+2))=(K+1)^2$
      (Lemma~\ref{lem:identities}), and is dual to the circle Chebyshev
      problem $\min_{P(1)=1}\max_{\abs u=1}\abs{(u-1)P(u)}=2/(K+1)$
      (Theorem~\ref{thm:chebyshev}). See the related-work discussion for
      the complementarity with Park--Ryu \cite{park2022exact}.
\item \textbf{The logarithm is a measure artifact}
      (Sections~\ref{sec:uniform} and~\ref{sec:rotation}). The
      averaged-reflection estimator satisfies
      \begin{equation}\label{eq:avref}
      r(\hat y_K)=\frac{(2J-I)^{K+1}y_0-y_0}{2(K+1)},\qquad
      \norm{r(\hat y_K)}\le \frac{d_0}{K+1},
      \end{equation}
      for \emph{every} maximal monotone $M$ (Theorem~\ref{thm:avref}), and
      the natural roots-of-unity rotation instance gives the exact
      instance minimax $1/\Lambda_K$,
      $\Lambda_K=\sum_{j=1}^{K+1}\csc\frac{\pi j}{K+2}
      =(\frac2\pi+o(1))K\log K$ (Theorem~\ref{thm:mainlb}, via the exact
      identity $\ell_j(1)=-\omega^j$ of Lemma~\ref{lem:lagrange}). The
      logarithm is not intrinsic: the $\csc$ mass profile is subcritical,
      while the extremal $\csc^2$ profile concentrates
      $\to 8/\pi^2$ of the mass at the two nodes nearest $u=1$
      (Remark~\ref{rem:artifact}).
\item \textbf{A sharp spectral phase transition, exact constants}
      (Section~\ref{sec:phase}). For rotation spectra whose eigenvalues
      keep distance $s=\dist(1,\sigma(L))$ from the eigenvalue $1$, a
      Jackson-kernel polynomial gives $\norm{r(y_K)}\le 16d_0/((K+1)^2 s)$
      (Theorem~\ref{thm:jackson}), beating the $1/K$ envelope precisely
      when $sK\to\infty$; at the critical floor
      $s=\sin(\pi/(2K+2))$ the extremal instance of C2 has instance
      minimax exactly $1/(K+1)$ (Theorem~\ref{thm:barrier}). The critical
      scale is $s\asymp 1/K$.
\item \textbf{Beyond skew-adjoint spectra} (Section~\ref{sec:beyond}).
      The whole phase transition holds verbatim for \emph{normal} maximal
      monotone operators (Theorem~\ref{thm:normal}); for non-normal
      linear operators the escape side degrades by a Kreiss factor
      (Remark~\ref{rem:nonnormal}). The minimax value over the genuinely
      nonlinear family $M=S+N_C$ ($S$ skew-adjoint, $C$ closed convex) is
      again exactly $d_0/(K+1)$ (Theorem~\ref{thm:nonlinminimax}); for
      subspace constraints the family stays inside the skew-adjoint class
      (Proposition~\ref{prop:subspace}), and the barrier is stable under
      $o(K^{-2})$ Lipschitz perturbations (Remark~\ref{rem:perturb}).
\item \textbf{Optimal safeguarding} (Section~\ref{sec:safeguard}). On
      nonlinear problems we give a certified scheme using exactly two
      resolvent evaluations per iteration whose output is always at least as
      good as the PPM fallback evaluated at the current point
      (Theorem~\ref{thm:cert}); and we prove that no certificate can be
      computed from window history alone---two maximal monotone operators
      generate identical histories and arbitrarily different candidate
      residuals (Proposition~\ref{prop:nohist})---so the factor two is
      optimal (Corollary~\ref{cor:optimal2}). On linear problems the
      safeguarding cost vanishes by C1.
\item \textbf{Corrected structured results} (Section~\ref{sec:structured}).
      Finite termination for affine $M$ (Theorem~\ref{thm:affine}); linear
      rates under strong monotonicity (Proposition~\ref{prop:strong}); a
      conditional statement for piecewise-affine $M$
      (Proposition~\ref{prop:pwa}); and the correct trichotomy under
      H\"olderian growth of order $q$ (Theorem~\ref{thm:growth}):
      superlinear of order $1/(q-1)$ for $q<2$, linear for $q=2$, and
      $d_k=O(k^{-1/(q-2)})$, $\norm{r_k}=O(k^{-(q-1)/(q-2)})$ for $q>2$,
      sharp for $f(x)=\abs{x}^q/q$.
\item \textbf{Honest numerics} (Section~\ref{sec:numerics}). The instance
      minimax of the extremal instance matches $1/(K+1)$ to machine
      precision for $K=1,\dots,100$, and on it the averaged reflection
      lands on the per-step floor $1/\sqrt{(K+1)(k+1)}$ at every step;
      on dense rotation ladders AA matches PPM's $\Theta(1/\sqrt{k})$
      rate with only a constant-factor gain; the Jackson crossover sits
      at $\delta K\approx10$; and the certified scheme of C6 dominates
      PPM at equal oracle cost on every tested instance.
\end{enumerate}

\subsection{Related work}

\textbf{Proximal point and last-iterate rates.} Rockafellar's classical
analysis \cite{rockafellar1976mono,rockafellar1976aug} gives convergence of
PPM under maximal monotonicity; Br\'ezis--Lions \cite{brezis1978produits}
treat the infinite-dimensional theory. Sublinear last-iterate rates for
firmly nonexpansive iterations are $\Theta(1/\sqrt{k})$ with sharp constants
\cite{bravo2019sharp,cominetti2014rates,contreras2022optimal}; Gu--Yang
\cite{guyang2020tight} prove tightness for PPM itself.

\textbf{Acceleration of monotone inclusions.} Halpern's iteration
\cite{halpern1967fixed,wittmann1992approximation} achieves $O(1/k)$ for
nonexpansive fixed points and is optimal in the worst case
\cite{lieder2021convergence,diakonikolas2020halpern}; Kim's APPM
\cite{kim2021accelerated} transfers the $O(1/k)$ rate to the PPM residual,
and anchored variants \cite{yoon2021accelerated,lee2021fast,
tran2021halpern} refine constants and last-iterate behaviour. Lower bounds
for first-order oracle models appear in
\cite{nemirovski2004prox,ouyang2021lower}. Closest to our
Section~\ref{sec:adaptive} is Park--Ryu \cite{park2022exact}, who prove
that the exact optimal complexity for \emph{span-type} methods (and, via a
resisting-oracle argument, for all deterministic methods) is
$d_0/(K+1)$, attained by OS-PPM/Halpern; their span lower bound tracks the
support growth on an affine worst-case operator and certifies the final
step $K$ only. Our Theorem~\ref{thm:exactminimax} is complementary and,
to our knowledge, new in four respects: it identifies the \emph{unique
extremal spectral measure} in closed form (the roots of $u^{K+1}=-1$ with
$\csc^2$ masses) through a Christoffel--Chebyshev duality; it certifies a
\emph{per-step} floor $1/\sqrt{(K+1)(k+1)}$ on one fixed instance for all
$k\le K$ simultaneously; it shows the instance-optimal polynomial is
\emph{uniquely} the Fej\'er kernel, so that on the hardest instance
full-memory AA collapses to the non-adaptive averaged reflection; and it
explains the logarithmic losses of natural candidate instances (the
$\csc$ versus $\csc^2$ artifact of Section~\ref{sec:rotation}). The lower
bound of \cite{park2022exact} covers deterministic methods beyond the
residual-polynomial class; ours gives the exact instance minimax and the
spectral structure inside that class, which is what the phase transition
and the safeguarding analysis below build on.

\textbf{Anderson acceleration.} AA originates in
\cite{anderson1965iterative}; its GMRES equivalence on linear problems is
developed in \cite{walker2011anderson,evans2020proof}; convergence theory
for contractive maps is in \cite{toth2015convergence,pollock2021anderson};
globally convergent safeguarding for AA on nonsmooth fixed points is in
\cite{zhang2020globally}; regularized nonlinear acceleration is in
\cite{scieur2020regularized}. GMRES theory on nonnormal operators and
restarts \cite{saad1986gmres,joubert1994convergence,
eiermann1992hybrid} underlies our arc-Chebyshev remarks. To our knowledge
the present paper gives the first sharp characterization of \emph{when} AA
can beat the PPM rate (the phase transition of C4) and the first
\emph{exact} instance minimax applying to AA's adaptive coefficients, with
the extremal spectral measure in closed form (C2).

\textbf{Error bounds and growth.} Metric subregularity and linear
convergence: \cite{leventhal2009metric}; \L ojasiewicz--Kurdyka rates for
(sub)gradient methods: \cite{kurdyka1998gradients,attouch2009proximal,
bolte2007lojasiewicz,li2018calculus}. Theorem~\ref{thm:growth} is the PPM
counterpart with sharp exponents.

\subsection{Notation}

$\Hh$ is a real Hilbert space, $\inner{\cdot}{\cdot}$ its inner product,
$\norm{\cdot}$ the norm. $M:\Hh\rightrightarrows\Hh$ is maximal monotone,
$Z=\zer M$ its (closed convex) zero set, $J=(I+M)^{-1}$ its resolvent,
$R=2J-I$ the reflected resolvent, $r(y)=J(y)-y$ the fixed-point residual,
$d(y)=\dist(y,Z)$, $d_0=d(y_0)$. Recall that $J$ and $R$ are nonexpansive,
$J$ is firmly nonexpansive, $\zer M=\operatorname{Fix}J=\operatorname{Fix}R$,
and $\norm{r(y)}$ is nonincreasing along PPM. For $f:\Hh\to\R\cup\{+\infty\}$
proper lsc convex, $M=\partial f$ recovers the proximal method. We identify
$2\times2$ blocks $\rho\big(\begin{smallmatrix}\cos\theta&-\sin\theta\\
\sin\theta&\cos\theta\end{smallmatrix}\big)$ with complex multiplication by
$\rho e^{i\theta}$.

\section{The class $\Aa_m$ and the polynomial reduction}\label{sec:poly}

\subsection{Anderson acceleration}

\begin{definition}[AA($m$) for PPM]\label{def:aam}
Given $y_0\in\Hh$, set $x_k=J(y_k)$, $r_k=x_k-y_k$. At iteration $k\ge 1$
with window $\mu=\min(m,k)$, AA($m$) solves
\begin{equation}\label{eq:aa}
\gamma^k\in\arg\min_{\gamma\in\R^{\mu+1},\,\sum_i\gamma_i=1}
\Big\|\sum_{i=0}^{\mu}\gamma_i\, r_{k-\mu+i}\Big\|^2,
\qquad
y_{k+1}=\sum_{i=0}^{\mu}\gamma^k_i\, x_{k-\mu+i}.
\end{equation}
\end{definition}

The surrogate $\norm{\sum\gamma_i r_i}$ is computable from the window at no
oracle cost. What it controls is the subject of the next lemma.

\begin{definition}[The class $\Aa_m$]\label{def:class}
A method belongs to $\Aa_m$ if, at iteration $k$, it performs exactly one
resolvent evaluation $x_k=J(v_k)$ at a query point
$v_k\in\aff\{y_0,x_0,\dots,x_{k-1}\}$ and then chooses
$y_{k+1}\in\aff\{y_0,x_{k-m},\dots,x_k\}$ (window $=$ whole history for
$m=\infty$), with all coefficients depending arbitrarily on everything
observed so far. $\Aa_\infty=\bigcup_m \Aa_m$ with unrestricted window.
\end{definition}

AA($m$) (with $v_k=y_k$), restarted AA, Chebyshev schedules, Halpern-type
anchored schemes \cite{halpern1967fixed,yoon2021accelerated}, and Kim's APPM
\cite{kim2021accelerated} all belong to $\Aa_\infty$; for APPM one checks by
induction that both sequences in \cite[Alg.~1]{kim2021accelerated} are
affine combinations of $y_0$ and resolvent outputs with coefficients summing
to one, and the averaged-reflection orbit $R^jy_0=(2J-I)^jy_0$ of
Theorem~\ref{thm:avref} is generated by the queries
$v_j=2x_{j-1}-v_{j-1}\in\aff\{y_0,x_0,\dots,x_{j-1}\}$.

\subsection{Polynomial reduction}

\begin{lemma}[Polynomial reduction]\label{lem:polyred}
Let $M$ be linear maximal monotone, $L=J$ its resolvent. For every method in
$\Aa_\infty$, every $y_0$, and every $k\ge0$ there exists a real polynomial
$Q_k$ (depending on the trajectory) with
\begin{equation}
Q_k(1)=1,\qquad \deg Q_k\le k,\qquad y_k=Q_k(L)y_0.
\end{equation}
Consequently $r(y_k)=(L-I)Q_k(L)y_0$.
\end{lemma}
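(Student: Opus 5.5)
The proof is a strong induction on $k$. It establishes a slightly stronger statement that also covers the query points and the resolvent outputs. When $M$ is linear maximal monotone, $J=L=(I+M)^{-1}$ is a bounded linear operator. Define $\mathcal P_k$ as the set of real polynomials $P$ with $\deg P\le k$ and $P(1)=1$. The inductive claim is: for every $j\le k$ there are polynomials $V_j\in\mathcal P_j$ and $Q_j\in\mathcal P_j$ such that
\[
v_j=V_j(L)y_0,\qquad x_j=LV_j(L)y_0,\qquad y_j=Q_j(L)y_0 .
\]
The polynomial $X_j(u):=uV_j(u)$ then has degree at most $j+1$ and satisfies $X_j(1)=1$. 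We also take $Q_0=1$.

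The inductive step rests on one closure fact: an affine combination of polynomials, each taking the value $1$ at $u=1$, again takes the value $1$ at $u=1$, because the coefficients sum to one. Its degree is at most the largest degree among the terms.

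Now suppose the claim holds up to index $k-1$. By Definition~\ref{def:class}, the query point is
\[
v_k=\alpha y_0+\sum_{i<k}\beta_i x_i,\qquad \alpha+\sum_i\beta_i=1 .
\]
The coefficients may depend on the observed history, but once the trajectory is fixed they are just fixed real numbers. So we set $V_k=\alpha+\sum_i\beta_iX_i$. This gives $V_k(1)=1$ and $\deg V_k\le\max_{i<k}(i+1)=k$. Linearity of $L$ then yields $v_k=V_k(L)y_0$ and $x_k=L v_k=X_k(L)y_0$.

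Next comes the update. It sets $y_{k+1}$ equal to an affine combination of $y_0$ and of $x_{k-m},\dots,x_k$, and each of these has degree at most $k+1$. So $Q_{k+1}$ is the corresponding affine combination of $1$ and the $X_i$. It lies in $\mathcal P_{k+1}$, and $y_{k+1}=Q_{k+1}(L)y_0$. The case $m=\infty$ works the same way.

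The residual formula follows at once from linearity:
\[
r(y_k)=Ly_k-y_k=(L-I)Q_k(L)y_0 .
\]
The coefficients of $L$ are real, so every $Q_k$ has real coefficients.

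The only subtle point is the adaptivity of the coefficients, together with any non-uniqueness in how a point is written as an affine combination. Both are harmless here. The lemma only asserts that a polynomial exists for the realized trajectory, and any fixed choice of representation gives a valid $Q_k$. No uniformity over trajectories is required.

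A second check is the degree count. Each query uses only the outputs $x_0,\dots,x_{k-1}$, whose degrees are at most $k$. Applying $L$ once raises the degree by exactly one. This is why $\deg Q_k\le k$, and it is where the restriction to one oracle call per iteration enters.
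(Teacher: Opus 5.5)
Your proof is correct and follows the paper's own induction: affine combinations of polynomials normalized to $1$ at $u=1$, with one factor of $L$ added per oracle call. Your separate bookkeeping of the query polynomials $V_j$, with $x_j=LV_j(L)y_0$, is slightly cleaner than the paper's version. The paper substitutes $x_i=LQ_i(L)y_0$ and so implicitly identifies queries with iterates, which is exact only when $v_i=y_i$.
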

\begin{proof}
Induction. $y_0=I\,y_0=:Q_0(L)y_0$. Suppose every past iterate and query is
a polynomial in $L$ applied to $y_0$ of the stated form. If
$v_k=\beta_{-1}y_0+\sum_i\beta_i x_i$ with
$\beta_{-1}+\sum_i\beta_i=1$, then
$v_k=\big(\beta_{-1}I+\sum_i\beta_i LQ_i\big)(L)y_0=:P_k(L)y_0$ with
$P_k(1)=\beta_{-1}+\sum_i\beta_i=1$ and $\deg P_k\le k$, so
$x_k=Lv_k=(LP_k)(L)y_0$ has degree $\le k+1$; the same affine combination
over $y_{k+1}$ gives $Q_{k+1}$ with $Q_{k+1}(1)=1$ and
$\deg Q_{k+1}\le k+1$.
\end{proof}

\begin{lemma}[AA is per-step optimal; residual identity]\label{lem:perstep}
Let $M$ be linear, $L=J$. For the AA candidate
$\hat y=\sum_i\gamma_i x_{k-\mu+i}$ with $\sum_i\gamma_i=1$,
\begin{equation}\label{eq:resid}
r(\hat y)=\sum_i\gamma_i\,(Lx_{k-\mu+i}-x_{k-\mu+i})
=\sum_i\gamma_i L r_{k-\mu+i}=L\Big(\sum_i\gamma_i r_{k-\mu+i}\Big).
\end{equation}
Hence the true residual of the candidate equals $L$ applied to the AA
surrogate, AA minimizes $\norm{r(\hat y)}$ up to the factor $\norm{L}\le1$
over the affine hull of the window, and
$\norm{r(\hat y)}\le\norm{r_k}$.
\end{lemma}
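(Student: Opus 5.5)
The plan is to get the identity \eqref{eq:resid} directly from linearity of $L$, using only the constraint $\sum_i\gamma_i=1$. Then I will read off the two optimality consequences from $\norm{L}\le1$ and from the fact that $\gamma=e_\mu$ is feasible.

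\emph{The identity.} Since $M$ is linear, $L=J$ is a linear operator and $r(y)=Ly-y=(L-I)y$. For $\hat y=\sum_i\gamma_i x_{k-\mu+i}$, linearity of $L-I$ gives
\[
r(\hat y)=\sum_i\gamma_i(L-I)x_{k-\mu+i}=\sum_i\gamma_i(Lx_{k-\mu+i}-x_{k-\mu+i}).
\]
The constraint $\sum_i\gamma_i=1$ is what makes the affine combination commute with $L$ and $I$ alike. Next, $x_j=Ly_j$ for AA (query $v_j=y_j$), so $(L-I)x_j=(L-I)Ly_j=L(L-I)y_j=Lr_j$, because $L$ commutes with $L-I$. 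Summing with weights $\gamma_i$ and pulling out $L$ gives $r(\hat y)=L\big(\sum_i\gamma_i r_{k-\mu+i}\big)$.

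\emph{The consequences.} Firm nonexpansiveness of $J$ together with $J(0)=0$ (as $M$ is linear) gives $\norm{L}\le1$. Hence, for every feasible $\gamma$,
\[
\norm{r(\hat y_\gamma)}\le\Big\|\sum_i\gamma_i r_{k-\mu+i}\Big\|.
\]
So the minimized surrogate upper-bounds the true residual of the AA candidate, which is the sense of ``optimal up to the factor $\norm{L}$'': over the affine hull of the window, the true residual is $L$ applied to the surrogate, and AA minimizes the surrogate. For the last claim I take the feasible choice $\gamma=e_\mu$ (all weight on the newest index), which gives surrogate value $\norm{r_k}$. Minimality of $\gamma^k$ then yields
\[
\norm{r(\hat y)}\le\norm{\textstyle\sum_i\gamma^k_ir_{k-\mu+i}}\le\norm{r_k}.
\]
If desired, this also gives the sharper comparison $\norm{r(\hat y)}\le\norm{Lr_k}=\norm{r(x_k)}$, i.e.\ AA beats one plain PPM step from $y_k$ when measured against the surrogate.

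\emph{Main obstacle.} The only point requiring care is the step $(L-I)x_j=Lr_j$. It needs $x_j=Ly_j$ exactly, i.e.\ the AA query $v_j=y_j$ and no inexactness, and it needs the operators to commute, which holds because all are polynomials in $L$ (consistent with Lemma~\ref{lem:polyred}). Everything else is a one-line norm estimate.
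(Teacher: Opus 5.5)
Your proof of the identity and of \(\norm{r(\hat y)}\le\norm{r_k}\) is correct and is essentially the paper's argument. Both use linearity of \(L-I\), the commutation \((L-I)x_j=L(L-I)y_j=Lr_j\), and then \(\norm{L}\le1\) together with feasibility of \(\gamma=e_\mu\). One small inaccuracy: \(\sum_i\gamma_i=1\) plays no role in the identity itself, which holds for every \(\gamma\) when \(L\) is linear. The constraint is needed only for the feasibility of \(e_\mu\), and it would matter if \(J\) were merely affine.

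The optional ``sharper comparison'' \(\norm{r(\hat y)}\le\norm{Lr_k}\) at the end, however, does not follow, and it is false in general. Minimality of \(\gamma^k\) controls \(\norm{\sum_i\gamma_i r_{k-\mu+i}}\), not \(\norm{L\sum_i\gamma_i r_{k-\mu+i}}\). Since \(L\) may contract \(r_k\) more than it contracts the minimizing surrogate, the inequality can fail.

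Here is a concrete instance. Take \(M=\begin{pmatrix}1&-2\\0&1\end{pmatrix}\) on \(\R^2\), which is monotone since \(\inner{Mx}{x}=(x_1-x_2)^2\). Then \(L=\frac12\begin{pmatrix}1&1\\0&1\end{pmatrix}\). Start from \(y_0=(0,2)\):
\begin{itemize}
\item the first steps give \(x_0=(1,1)\), \(r_0=(1,-1)\), \(y_1=x_0\), \(x_1=(1,\tfrac12)\) and \(r_1=(0,-\tfrac12)\);
\item AA with \(m\ge1\) at \(k=1\) minimizes \((1-t)^2+(1-t/2)^2\), which gives \(t=\tfrac65\) and surrogate \((-\tfrac15,-\tfrac25)\);
\item the candidate is \(\hat y=(1,\tfrac25)\), with \(r(\hat y)=(-\tfrac3{10},-\tfrac15)\).
\end{itemize}
Hence \(\norm{r(\hat y)}^2=\tfrac{13}{100}>\tfrac18=\norm{Lr_1}^2=\norm{r(x_1)}^2\). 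One plain PPM step from \(y_1\) beats the AA candidate here. At the same time \(\tfrac{13}{100}\le\tfrac15\le\tfrac14\), which is consistent with the lemma itself. Drop that remark; the lemma does not need it.
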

\begin{proof}
Only \eqref{eq:resid} needs proof: by linearity
$r(\hat y)=L\hat y-\hat y=\sum_i\gamma_i(Lx_{k-\mu+i}-x_{k-\mu+i})$, and
$Lx_i-x_i=L(x_i-y_i)=Lr_i$ since $Lx_i-x_i=L^2y_i-Ly_i=L(L-I)y_i=Lr_i$.
The last claim uses that $L$ is nonexpansive and that the surrogate
minimization includes the choice $\gamma=e_{\mu}$, giving
$\norm{r(\hat y)}\le\norm{L}\min_\gamma\norm{\sum\gamma_i r_i}
\le\min_\gamma\norm{\sum\gamma_i r_i}\le\norm{r_k}$.
\end{proof}

\begin{corollary}[Linear safety: no safeguard needed]\label{cor:linsafe}
On every linear maximal monotone problem, the AA-PPM residual sequence is
nonincreasing, $\norm{r_{k+1}}\le\norm{r_k}$ for all $k$, with any memory
$m$. Safeguarding is therefore a purely \emph{nonlinear} phenomenon.
\end{corollary}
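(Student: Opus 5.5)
The corollary should follow almost immediately from Lemma~\ref{lem:perstep}, applied with $k$ replaced by the current iteration index. The only care needed is in how the iterates are indexed: $r_{k+1}=r(y_{k+1})$ is the residual of the iterate produced at step $k$. By Definition~\ref{def:aam}, that iterate is exactly the AA candidate
\[
\hat y=\sum_{i=0}^{\mu}\gamma^k_i x_{k-\mu+i},\qquad \sum_i\gamma^k_i=1 .
\]
So $r_{k+1}=r(\hat y)$, and the residual identity \eqref{eq:resid} gives
\[
r_{k+1}=L\Big(\sum_i\gamma^k_i r_{k-\mu+i}\Big).
\]

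The key steps, in order, are as follows.
\begin{enumerate}[label=(\roman*)]
\item Since $M$ is linear and maximal monotone, $L=J$ is a linear firmly nonexpansive map, so $\norm{L}\le1$.
\item Hence $\norm{r_{k+1}}\le\norm{\sum_i\gamma^k_i r_{k-\mu+i}}$.
\item Because $\gamma^k$ minimizes the surrogate over the affine constraint set, and the unit vector $e_\mu$ (all weight on the newest index) is feasible, the surrogate value is at most $\norm{r_k}$.
\end{enumerate}
This holds for any window $\mu=\min(m,k)$, so it holds for every memory $m$, including $m=\infty$. For $k=0$, where no AA step is taken, I will use the convention $y_1=x_0$, which is the case $\mu=0$ and $\gamma=1$. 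Then $r_1=Lr_0$ and the bound is immediate.

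I must also check that the ``residual identity'' really uses the iterates the method actually produces. The residuals $r_{k-\mu+i}=x_{k-\mu+i}-y_{k-\mu+i}$ are each evaluated at the AA-generated $y_j$, and $x_j=Ly_j$. So the computation $Lx_j-x_j=L(Ly_j-y_j)=Lr_j$ is valid for any sequence $y_j$, not just PPM iterates. This is the only point where linearity is essential.

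There is no real obstacle here. The one subtlety is whether the argmin in \eqref{eq:aa} is attained when the residuals are linearly dependent. It is: the problem is a least-squares problem over an affine set, so a minimizer exists, and any minimizer satisfies the comparison with $e_\mu$. Finally, I will remark why the statement fails for nonlinear problems. There, $r(\hat y)$ is no longer an image of the surrogate under a nonexpansive map, which motivates the conclusion that safeguarding is a purely nonlinear phenomenon.
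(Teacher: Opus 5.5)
Your proposal is correct and follows the paper's route: the corollary is the last claim of Lemma~\ref{lem:perstep} with $\hat y=y_{k+1}$, so $\norm{r_{k+1}}=\norm{L\sum_i\gamma^k_i r_{k-\mu+i}}\le\norm{L}\min_\gamma\norm{\sum_i\gamma_i r_{k-\mu+i}}\le\norm{r_k}$, using the feasible choice $\gamma=e_\mu$. Your extra checks are details the paper leaves implicit: the $k=0$ convention $y_1=x_0$, attainment of the least-squares minimizer, and the validity of $Lx_j-x_j=Lr_j$ along any sequence with $x_j=Ly_j$.
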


\begin{remark}[What the surrogate misses in the nonlinear case]\label{rem:nonlin}
For nonlinear $J$ the identity $r(\hat y)=L\sum\gamma_i r_i$ fails: the
surrogate measures the combination of residuals, not the residual of the
combination. This single fact explains both the need for safeguarding in
general and its superfluity on linear and piecewise-linear-away-from-kinks
problems; cf.\ Proposition~\ref{prop:nohist}.
\end{remark}

\begin{remark}[Adaptive lower bounds reduce to polynomial lower bounds]
\label{rem:reduction}
By Lemma~\ref{lem:polyred}, any lower bound on
$\norm{(L-I)Q(L)y_0}$ that holds for \emph{every} real polynomial $Q$ with
$Q(1)=1$, $\deg Q\le K$, applies to every method in $\Aa_\infty$ after $K$
oracle calls, however adaptive. Sections~\ref{sec:uniform}--\ref{sec:phase}
exploit this systematically.
\end{remark}

\section{The uniform complexity is $\Theta(1/K)$}\label{sec:uniform}

We first pin down the minimax residual achievable by $K$ resolvent
evaluations with no structural assumption on $M$.

\begin{theorem}[Averaged reflection: the optimal uniform rate, elementary
proof]\label{thm:avref}
Let $M$ be maximal monotone, $R=2J-I$, and
\begin{equation}\label{eq:avredef}
\hat y_K=\frac1{K+1}\sum_{j=0}^{K} R^j y_0 .
\end{equation}
Then $\hat y_K$ costs $K$ resolvent evaluations,
\begin{equation}\label{eq:telescope}
r(\hat y_K)=\frac{R^{K+1}y_0-y_0}{2(K+1)},
\qquad\text{and}\qquad
\norm{r(\hat y_K)}\le \frac{d_0}{K+1}.
\end{equation}
\end{theorem}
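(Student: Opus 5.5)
The plan is to rewrite the residual through the reflected resolvent, telescope the reflection orbit, and then bound the endpoint difference by nonexpansiveness of $R$ about a zero of $M$.

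\textbf{Step 1: oracle count.} I will take the queries $v_0=y_0$ and $v_j=2x_{j-1}-v_{j-1}$, as in the discussion after Definition~\ref{def:class}. Then $v_j=R^jy_0$. The orbit $R^0y_0,\dots,R^Ky_0$ needs only $x_0,\dots,x_{K-1}$, i.e.\ $K$ resolvent evaluations. So $\hat y_K$ is an affine combination with weights $1/(K+1)$ and costs $K$ oracle calls.

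\textbf{Step 2: the identity.} Since $R=2J-I$, we have $r(y)=J(y)-y=\tfrac12(R(y)-y)$ for every $y$. Apply this at $y=\hat y_K$. The telescoping step needs
\[
R(\hat y_K)=\frac1{K+1}\sum_{j=0}^{K}R^{j+1}y_0 .
\]
Granting this,
\[
2(K+1)\,r(\hat y_K)=\sum_{j=0}^{K}\bigl(R^{j+1}y_0-R^jy_0\bigr)=R^{K+1}y_0-y_0 ,
\]
which is the first claim in \eqref{eq:telescope}.

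\textbf{Step 3: the bound.} Let $z=P_Z y_0$, so $\norm{y_0-z}=d_0$. Since $z\in\operatorname{Fix}R$ and $R$ is nonexpansive, induction gives $\norm{R^{K+1}y_0-z}\le\norm{y_0-z}=d_0$. The triangle inequality then yields
\[
\norm{R^{K+1}y_0-y_0}\le\norm{R^{K+1}y_0-z}+\norm{z-y_0}\le 2d_0 .
\]
Dividing by $2(K+1)$ gives $\norm{r(\hat y_K)}\le d_0/(K+1)$.

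\textbf{Main obstacle.} The hard step is the interchange $R\bigl(\tfrac1{K+1}\sum_j R^jy_0\bigr)=\tfrac1{K+1}\sum_j R^{j+1}y_0$ in Step~2. It holds exactly when $R$ is affine on $\aff\{R^jy_0\}$, in particular for every linear (or affine) maximal monotone $M$. For a genuinely nonlinear resolvent it is not automatic. There the natural attempt is an estimate of the form $\norm{R(\hat y_K)-\tfrac1{K+1}\sum_jR^{j+1}y_0}$, controlled via firm nonexpansiveness of $J$: one would expand $\sum_j\norm{R(\hat y_K)-R^{j+1}y_0}^2$ against $\sum_j\norm{\hat y_K-R^jy_0}^2$ and use the variance identity for the average.

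I expect this step to carry the whole weight of the claim ``for every maximal monotone $M$.'' I would check it carefully before accepting the statement as worded. If the estimate does not close, the statement would have to be restricted to the affine case, where Steps~1–3 give a complete proof.
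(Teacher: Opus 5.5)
Your Steps 1--3 are the paper's proof. The paper writes $r(\hat y_K)=\frac{R-I}{2(K+1)}\sum_{j=0}^{K}R^jy_0$ ``since $J=(I+R)/2$ and powers of $R$ commute.'' That silently performs the interchange $R(\hat y_K)=\frac1{K+1}\sum_jR^{j+1}y_0$ you flagged, which is justified only when $R$ is affine. Your suspicion is correct, and the gap cannot be closed: for nonlinear $M$ both conclusions of the statement are false.

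\textbf{The identity fails.} Take $M=N_{[0,\infty)}$ on $\R$, so $Ry=\abs{y}$, with $y_0=-1$ and $K=1$. Then $\hat y_1=Jy_0=0$ and $r(\hat y_1)=0$, whereas $(R^2y_0-y_0)/4=1/2$.

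\textbf{The bound fails.} Take $K=2$ in $\R^2$ and prescribe $T$ on four points:
\begin{itemize}
\item $0\mapsto 0$;
\item $v_0=(\tfrac45,\tfrac35)\mapsto v_1=(0,-\tfrac35)$;
\item $v_1\mapsto v_2=(\tfrac{13}{40},0)$;
\item $\hat y=(\tfrac38,0)\mapsto u=(-\tfrac38,0)$.
\end{itemize}
In units of $1/1600$, the six squared image distances against the corresponding domain distances are $576\le1600$, $169\le576$, $225\le225$, $745\le3328$, $801\le865$ and $784\le801$. So $T$ is nonexpansive on these points. By Kirszbraun--Valentine it extends to a nonexpansive map of $\R^2$. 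Then $J=(I+T)/2$ is firmly nonexpansive with full domain, hence by Minty the resolvent of a maximal monotone $M$ with $R=T$ and $0\in\zer M$, so $d_0\le1$. Since $v_0+v_1+v_2=(\tfrac98,0)$, we get $\hat y_2=\hat y$ and $r(\hat y_2)=\tfrac12(u-\hat y)=(-\tfrac38,0)$. Thus $\norm{r(\hat y_2)}=\tfrac38>\tfrac13\ge d_0/3$.

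The variance estimate you propose cannot rescue the nonlinear case. Write $v_j=R^jy_0$ and $\operatorname{Var}(\cdot)$ for the empirical variance about the mean. Nonexpansiveness gives $\norm{R\hat y_K-\frac1{K+1}\sum_{j=0}^{K}v_{j+1}}^2\le\operatorname{Var}(v_0,\dots,v_K)-\operatorname{Var}(v_1,\dots,v_{K+1})$. Telescoping this difference along diagonals bounds it only by $4d_0^2/(K+1)$. That yields $\norm{r(\hat y_K)}\le d_0/\sqrt{K+1}+d_0/(K+1)$, which is just the PPM order and is consistent with the counterexample.

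So the statement is true only for affine $M$, and there your Steps 1--3 (equivalently, the paper's proof) form a complete argument. The claim ``for every maximal monotone $M$'' is false. The nonlinear upper bounds that rest on it, namely those in Corollary~\ref{cor:exactV} and Theorem~\ref{thm:nonlinminimax}, need a genuinely nonlinear $O(d_0/K)$ method instead, such as a Halpern-type scheme (OS-PPM) or Kim's APPM.
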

\begin{proof}
Since $J=(I+R)/2$ and powers of $R$ commute,
\[
r(\hat y_K)=\Big(\frac{R+I}{2}-I\Big)\hat y_K
=\frac{(R-I)}{2(K+1)}\sum_{j=0}^K R^jy_0
=\frac{R^{K+1}y_0-y_0}{2(K+1)} .
\]
For any $z\in Z$, $Rz=z$ and $R$ is nonexpansive, so
$\norm{R^{K+1}y_0-y_0}\le \norm{R^{K+1}y_0-z}+\norm{y_0-z}\le 2\norm{y_0-z}$;
minimizing over $z\in Z$ gives \eqref{eq:telescope}. The iterates
$R^jy_0=(2J-I)(R^{j-1}y_0)=2J(R^{j-1}y_0)-R^{j-1}y_0$ cost one resolvent
evaluation each.
\end{proof}

\begin{remark}\label{rem:avref}
The estimator \eqref{eq:avredef} is the Fej\'er (Ces\`aro) average of the
reflection orbit; its residual polynomial is
$p(\zeta)=(\zeta^{K+1}-1)/(2(K+1))$. It achieves the same $O(d_0/K)$ order as
Kim's APPM \cite{kim2021accelerated} with a one-line proof, no parameters,
and no anchoring; unlike APPM it is offline ($K$ must be chosen in advance).
The identity \eqref{eq:telescope} also shows the bound is attained only when
$y_0$ and $R^{K+1}y_0$ are antipodal about $z$, matching the rotation worst
case.
\end{remark}

\begin{theorem}[Bernstein lower bound for scheduled methods]\label{thm:bernstein}
Let the method be any non-adaptive polynomial schedule
$y_K=Q_K(L)y_0$, $Q_K(1)=1$, $\deg Q_K\le K$ (this includes
\eqref{eq:avredef}, APPM, and Chebyshev schedules). Then there is a
skew-symmetric linear $M$ (a single rotation block, $\Hh=\R^2$) and $y_0$
with $d_0=\norm{y_0}=1$ such that
\begin{equation}
\norm{r(y_K)}\ \ge\ \frac{1}{2(K+1)} .
\end{equation}
\end{theorem}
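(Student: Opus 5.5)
The plan is to reduce to a one-variable extremal problem on the unit circle and close it with Bernstein's inequality for trigonometric polynomials. Fix the schedule $Q_K$ (real, $Q_K(1)=1$, $\deg Q_K\le K$). On $\Hh=\R^2$, take $R$ to be the rotation by an angle $\theta$, identified as in the Notation with multiplication by $\omega=e^{i\theta}$. Set $L=J=(I+R)/2$, so that $J$ acts as multiplication by $\lambda=(1+\omega)/2$.

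\textbf{Step 1: the instance is admissible.} For $\omega\ne\pm1$, the operator $M=J^{-1}-I$ acts as multiplication by $2/(1+\omega)-1=(1-\omega)/(1+\omega)$. This number is purely imaginary because $|\omega|=1$. Hence $M$ is a skew-symmetric rotation-type block, and in particular it is linear and maximal monotone. Its zero set is $\{0\}$, so any unit vector $y_0$ has $d_0=\norm{y_0}=1$.

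\textbf{Step 2: reduce to the circle.} Since $Q_K$ is real and the block acts as complex multiplication, we have $\norm{(L-I)Q_K(L)y_0}=|(\lambda-1)Q_K(\lambda)|$. Define $P(\omega):=Q_K((1+\omega)/2)$, a polynomial of degree $\le K$ with $P(1)=1$. Using $\lambda-1=(\omega-1)/2$, it suffices to find an admissible $\omega$ with $|T(\omega)|\ge 1/(K+1)$, where $T(\omega):=(\omega-1)P(\omega)$.

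\textbf{Step 3: Bernstein.} The polynomial $T$ has degree $\le K+1$, satisfies $T(1)=0$, and has $T'(1)=P(1)=1$. Bernstein's inequality for algebraic polynomials on the unit circle gives $|T'(u)|\le (K+1)\max_{|\zeta|=1}|T(\zeta)|$. Evaluating at $u=1$ yields $\max_{|\zeta|=1}|T(\zeta)|\ge 1/(K+1)$.

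\textbf{Step 4: the main technical point, admissibility of the maximizer.} The maximizer $\omega^*$ might be $-1$, which corresponds to $M=\infty$; it cannot be $1$, since $T(1)=0$. I would use the equality case of Bernstein's inequality. Equality holds only for $T(\zeta)=c\,\zeta^{K+1}$, and such a $T$ cannot vanish at $1$ unless $T\equiv0$, which is excluded because $T'(1)=1$. Hence the inequality is strict: $\max|T|>1/(K+1)$. By continuity there is then an $\omega\ne\pm1$ near $\omega^*$ with $|T(\omega)|\ge 1/(K+1)$.

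Choosing this $\omega$ and any unit $y_0$ then gives $\norm{r(y_K)}=|T(\omega)|/2\ge 1/(2(K+1))$, which is the claim.
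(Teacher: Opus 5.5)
Your proof is correct and follows the paper's route: a single rotation block with resolvent eigenvalue $(1+u)/2$, the same $g(u)=(u-1)P(u)$ with $g(1)=0$ and $g'(1)=P(1)=1$, and Bernstein's inequality evaluated at $u=1$. Your Step~4 (strictness from the Bernstein equality case $T=c\zeta^{K+1}$, then perturbing off $u=\pm1$) and your full-circle parametrization settle a point the paper's proof skips when it simply takes $\theta^\star$ attaining the maximum: that maximizer may sit at $u=-1$, where no linear skew block realizes the resolvent.
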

\begin{proof}
For the skew-symmetric block $M_\theta x=\tan\theta
\big(\begin{smallmatrix}0&-1\\1&0\end{smallmatrix}\big)x$,
$\theta\in(0,\pi/2)$, the resolvent is $L_\theta=\cos\theta\,
\big(\begin{smallmatrix}\cos\theta&-\sin\theta\\\sin\theta&\cos\theta
\end{smallmatrix}\big)$, identified with complex multiplication by
$\zeta(\theta)=\cos\theta\,e^{i\theta}=\frac{1+e^{2i\theta}}2$, and
$\zer M_\theta=\{0\}$. With $u=e^{2i\theta}$ write
$\zeta=(1+u)/2$ and $P(u)=Q_K((1+u)/2)$, a polynomial of degree $\le K$ in
$u$ with $P(1)=Q_K(1)=1$. Then
\[
(\zeta-1)Q_K(\zeta)=\frac{u-1}{2}\,P(u)=:\frac{g(u)}2,\qquad
\deg g\le K+1,\quad g(1)=0,\quad g'(1)=P(1)=1.
\]
By Bernstein's inequality
\cite[Ch.~14]{rahman2002analytic},
\[
\max_{\abs{u}=1}\abs{g'(u)}\;\le\;(K+1)\max_{\abs{u}=1}\abs{g(u)},
\qquad\text{hence}\qquad
\max_{\abs{u}=1}\abs{g(u)}\;\ge\;\frac{1}{K+1}.
\]
Choosing $\theta^\star$ attaining
the maximum and $M=M_{\theta^\star}$,
\[
\norm{r(y_K)}=\abs{(\zeta(\theta^\star)-1)Q_K(\zeta(\theta^\star))}\,
\norm{y_0}\ \ge\ \frac{1}{2(K+1)}. \qedhere
\]
\end{proof}

\begin{corollary}[Factor-two tightness]\label{cor:fact2}
The uniform minimax value $V_K=\inf\sup\norm{r(y_K)}$, with the infimum
over schedules and the supremum over maximal monotone $M$ with $d_0=1$,
satisfies
\[
\frac{1}{2(K+1)}\ \le\ V_K\ \le\ \frac{1}{K+1}.
\]
\end{corollary}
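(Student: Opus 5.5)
The two inequalities have different sources, so I will prove them separately and then combine them. Throughout, I read the infimum in $V_K$ as ranging over polynomial schedules $Q_K$ with $Q_K(1)=1$ and $\deg Q_K\le K$, realized by $K$ resolvent evaluations. The supremum ranges over maximal monotone $M$ and starting points $y_0$ with $d_0=1$.

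\emph{Upper bound.} It suffices to exhibit one schedule whose worst case is at most $1/(K+1)$. I take the averaged-reflection estimator $\hat y_K$ of \eqref{eq:avredef}. By Theorem~\ref{thm:avref} it costs $K$ resolvent evaluations and satisfies $\norm{r(\hat y_K)}\le d_0/(K+1)$ for every maximal monotone $M$. Taking the supremum over $M$ with $d_0=1$ gives at most $1/(K+1)$, and hence $V_K\le 1/(K+1)$.

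A subtlety is that Theorem~\ref{thm:avref} holds for nonlinear $M$, whereas $\hat y_K$ is a polynomial schedule only when $M$ is linear. This causes no difficulty, because the estimator is a fixed, non-adaptive prescription of queries and combinations. By Remark~\ref{rem:avref}, its residual polynomial on linear problems is $(\zeta^{K+1}-1)/(2(K+1))$ evaluated at $\zeta=R$, so it is admissible in the class over which the infimum is taken.

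\emph{Lower bound.} Fix an arbitrary schedule $Q_K$. Theorem~\ref{thm:bernstein} supplies a skew-symmetric rotation block $M_{\theta^\star}$, which is linear and maximal monotone with $\zer M=\{0\}$. It also supplies a unit vector $y_0$, so that $d_0=\norm{y_0}=1$, and for this pair $\norm{r(y_K)}\ge 1/(2(K+1))$. Consequently $\sup_M\norm{r(y_K)}\ge 1/(2(K+1))$ for every schedule, and taking the infimum over schedules yields $V_K\ge 1/(2(K+1))$.

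The order of quantifiers is the one point that needs care, and it is the closest thing to an obstacle here. The instance $\theta^\star$ is allowed to depend on $Q_K$, which is exactly what an inf--sup lower bound permits. The dependence is legitimate because the schedule is non-adaptive: $Q_K$ is fixed before $M$ is revealed.

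If adaptive methods were included in the infimum, this argument would fail, since the polynomial could then depend on $M$. That case is not needed for this corollary and is deferred to the later theorem. With the two inequalities established, combining them gives the stated sandwich
\[
\frac{1}{2(K+1)}\le V_K\le \frac{1}{K+1}.
\]
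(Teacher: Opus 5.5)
Your proof is correct and follows the paper's own reasoning. The corollary combines Theorem~\ref{thm:avref}, where the averaged-reflection estimator is a fixed schedule attaining $d_0/(K+1)$ for every maximal monotone $M$, with Theorem~\ref{thm:bernstein}, where a rotation block chosen after the schedule forces $1/(2(K+1))$. Your point that this choice is legitimate only because the schedule is non-adaptive is exactly the observation the paper makes at the opening of Section~\ref{sec:adaptive}.
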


\begin{remark}[The factor two is closed in Section~\ref{sec:adaptive}]
\label{rem:fact2closed}
Corollary~\ref{cor:fact2} leaves a factor two open. The lower-bound side
of that gap is closed in Section~\ref{sec:adaptive}: the extremal instance
of Definition~\ref{def:extremal} has instance minimax \emph{exactly}
$1/(K+1)$ for every polynomial method, adaptive or not
(Theorem~\ref{thm:exactminimax}), so $V_K=1/(K+1)$ and the
averaged-reflection estimator is exactly minimax
(Corollary~\ref{cor:exactV}). The Bernstein step in
Theorem~\ref{thm:bernstein} indeed loses a factor two, and the Cohn--Egerv\'ary
question it raises is settled by the circle Chebyshev computation
$\min_{P(1)=1}\max_{\abs u=1}\abs{(u-1)P(u)}=2/(K+1)$ of
Theorem~\ref{thm:chebyshev}: the extremal polynomial is the Fej\'er
kernel.
\end{remark}

\begin{remark}\label{rem:fejer}
The Fej\'er polynomial $q_K^\star(u)=\frac1{K+1}\sum_{j=0}^K
u^j$ satisfies $\max_{\abs{u}=1}\abs{(1-u)q_K^\star(u)}=2/(K+1)$
\emph{exactly} (attained at $u=e^{i\pi/(K+1)}$), so the upper side of
Corollary~\ref{cor:fact2} is sharp within its class; see
Table~\ref{tab:fejer} and Theorem~\ref{thm:chebyshev}.
\end{remark}

\section{The exact adaptive minimax}\label{sec:adaptive}

Theorem~\ref{thm:bernstein} chooses the adversarial rotation after seeing the
schedule, which is legitimate only for non-adaptive methods: an adaptive
method such as AA observes the operator and bends its polynomial toward the
observed spectrum, and on a \emph{single} block it nearly terminates in two
steps. We now identify the instance on which \emph{no} adaptive behavior
helps \emph{at all}, and compute its instance minimax exactly: it equals
$1/(K+1)$, matching the universal averaged-reflection bound of
Theorem~\ref{thm:avref}. This simultaneously closes the factor-two window
of Corollary~\ref{cor:fact2} and shows that on linear worst cases
adaptivity and memory buy nothing.

\subsection{The extremal instance}

\begin{definition}[The extremal instance]\label{def:extremal}
For $K\ge1$ let $n=K+1$ and
\[
\phi_j=\frac{(2j+1)\pi}{n},\qquad \omega_j=e^{i\phi_j},\qquad
j=0,\dots,n-1,
\]
the $n$ roots of $u^n=-1$, and
\[
w_j=\frac1{n^2}\csc^2\frac{\phi_j}{2}
=\frac1{n^2}\csc^2\frac{(2j+1)\pi}{2n},\qquad j=0,\dots,n-1 .
\]
Let $M_K^\star$ be the skew-adjoint operator on $\R^{2n}$ whose $j$-th
$2\times2$ block is multiplication by $-i\tan(\phi_j/2)$---i.e.\
$t_j\big(\begin{smallmatrix}0&1\\-1&0\end{smallmatrix}\big)$ with
$t_j=\tan(\phi_j/2)$---with the convention that a block with $\phi_j=\pi$
(which occurs when $n$ is odd) carries the normal cone of $\{0\}$, so its
resolvent vanishes on that block; equivalently, the resolvent
$L=J_K^\star$ has eigenvalues
\begin{equation}\label{eq:exteigs}
\zeta_j=\frac{1+\omega_j}{2}
=\cos\frac{\phi_j}2\,e^{\pm i\phi_j/2},\qquad j=0,\dots,n-1 .
\end{equation}
Let $y_0^\star\in\R^{2n}$ have norm $\sqrt{w_j}$ on block $j$.
\end{definition}

\begin{lemma}[Normalization]\label{lem:csc2}
$\sum_{j=0}^{n-1}w_j=1$; equivalently
\begin{equation}\label{eq:csc2}
\sum_{j=0}^{n-1}\csc^2\frac{(2j+1)\pi}{2n}=n^2 .
\end{equation}
Hence $\norm{y_0^\star}=1$, and since no $\omega_j$ equals $1$,
$\zer M_K^\star=\{0\}$ and $d_0=1$.
\end{lemma}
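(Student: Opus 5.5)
We need to prove $\sum_{j=0}^{n-1}\csc^2\frac{(2j+1)\pi}{2n}=n^2$, and then check the consequences stated in the lemma.

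\emph{Main identity.} The plan is to read the cosecant sum as a sum over the roots of a polynomial equation and evaluate it by a partial-fraction argument. Set $\theta_j=(2j+1)\pi/(2n)$. These are exactly the angles in $(0,\pi)$ with $\cos(n\theta_j)=0$. Since $\csc^2\theta=\tfrac12\csc^2\tfrac\theta2+\tfrac12\sec^2\tfrac\theta2$, the cleanest route avoids trigonometric Chebyshev bookkeeping and works directly with $\omega_j=e^{2i\theta_j}$, the roots of $u^n=-1$. For $\abs u=1$, $u=e^{2i\theta}$, we have $\abs{1-u}^2=4\sin^2\theta$. Hence
\[
\csc^2\theta_j=\frac{4}{\abs{1-\omega_j}^2}=\frac{-4\omega_j}{(1-\omega_j)^2}.
\]
So it suffices to compute $S=\sum_j \frac{\omega_j}{(1-\omega_j)^2}$ and show $S=-n^2/4$.

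\emph{Evaluating $S$.} Write $\frac{\omega}{(1-\omega)^2}=\frac{1}{(1-\omega)^2}-\frac{1}{1-\omega}$. Let $f(u)=u^n+1=\prod_j(u-\omega_j)$. Then
\[
\sum_j\frac1{x-\omega_j}=\frac{f'(x)}{f(x)},\qquad \sum_j\frac1{(x-\omega_j)^2}=-\Big(\frac{f'}{f}\Big)'(x),
\]
and we evaluate both at $x=1$, where $f(1)=2$, $f'(1)=n$, $f''(1)=n(n-1)$. This gives $\sum_j\frac1{1-\omega_j}=n/2$ and
\[
\sum_j\frac1{(1-\omega_j)^2}=\frac{f'(1)^2-f(1)f''(1)}{f(1)^2}=\frac{n^2-2n(n-1)}{4}=\frac{2n-n^2}{4}.
\]
Therefore $S=\frac{2n-n^2}{4}-\frac n2=-\frac{n^2}{4}$, and
\[
\sum_j\csc^2\theta_j=-4S=n^2,
\]
which is \eqref{eq:csc2}. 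The only step needing care is the sign convention in $\abs{1-u}^2=(1-u)(1-\bar u)=-(1-u)^2/u$, which I will verify explicitly.

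\emph{Consequences.} Dividing \eqref{eq:csc2} by $n^2$ gives $\sum_j w_j=1$. By Definition~\ref{def:extremal}, $\norm{y_0^\star}^2=\sum_j w_j=1$.

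For the zero set, take first a block with $\phi_j\ne\pi$. It acts as $t_j$ times an invertible rotation generator with $t_j=\tan(\phi_j/2)\neq0$, because no $\omega_j$ equals $1$. Its only zero is therefore $0$. A block with $\phi_j=\pi$ carries $N_{\{0\}}$, whose zero set is $\{0\}$. Hence $\zer M_K^\star=\{0\}$.

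Equivalently, no resolvent eigenvalue $\zeta_j=(1+\omega_j)/2$ equals $1$, so $\operatorname{Fix}J=\{0\}$. Consequently $d_0=\norm{y_0^\star}=1$.

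I expect no real obstacle here; the only delicate point is the treatment of the degenerate block $\phi_j=\pi$ (odd $n$). There $\omega_j=-1$ and $\csc^2(\pi/2)=1$, which is consistent with the algebraic identity above, so it needs no special handling in the sum.
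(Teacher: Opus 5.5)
Your proof is correct, and it reaches \eqref{eq:csc2} by a different, though closely related, route from the paper's.

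The paper argues trigonometrically. It takes the logarithmic derivative of the product formula $\sin(nx)=2^{n-1}\prod_{j}\sin(x+j\pi/n)$ to get $n\cot(nx)=\sum_j\cot(x+j\pi/n)$, differentiates once more, and sets $x=\pi/(2n)$ so that $\csc^2(nx)=1$.

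You instead rewrite each term as $\csc^2\theta_j=-4\omega_j/(1-\omega_j)^2$. You then evaluate the first and second logarithmic derivatives of the node polynomial $u^n+1$ at $u=1$, which needs only $f(1)=2$, $f'(1)=n$, $f''(1)=n(n-1)$.

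Both are second-log-derivative arguments, and each buys something different:
\begin{itemize}
\item The paper's is shorter once the classical product formula for $\sin(nx)$ is granted. It also yields the whole one-parameter family $n^2\csc^2(nx)=\sum_j\csc^2(x+j\pi/n)$.
\item Yours is self-contained polynomial algebra. It uses the same node polynomial $N(u)=u^n+1$ as Lemma~\ref{lem:identities}, which shows that the normalization is literally Lemma~\ref{lem:identities}(ii), since $\abs{\ell_j(1)}^2=4/(n^2\abs{1-\omega_j}^2)=w_j$.
\end{itemize}

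Your block-by-block treatment of $\zer M_K^\star$, including the $N_{\{0\}}$ block for odd $n$, is more careful than the paper's one-line remark.

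One harmless slip: the half-angle identity you quote in passing should read $\csc^2\theta=\tfrac14\csc^2\tfrac\theta2+\tfrac14\sec^2\tfrac\theta2$, not with coefficients $\tfrac12$. It plays no role in your argument.
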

\begin{proof}
Differentiate the partial-fraction identity
$n\cot(nx)=\sum_{j=0}^{n-1}\cot(x+j\pi/n)$---itself the logarithmic
derivative of $\sin(nx)=2^{n-1}\prod_{j=0}^{n-1}\sin(x+j\pi/n)$---to get
$n^2\csc^2(nx)=\sum_{j=0}^{n-1}\csc^2(x+j\pi/n)$, and set $x=\pi/(2n)$.
\end{proof}

Three exact identities drive everything that follows.

\begin{lemma}[The driving identities]\label{lem:identities}
Let $\ell_j$ be the Lagrange basis on the nodes $\{\omega_j\}$:
$\ell_j(\omega_i)=\delta_{ij}$, $\deg\ell_j=n-1=K$. Then
\begin{enumerate}[label=(\roman*)]
\item $\displaystyle \ell_j(1)=\frac{-2\omega_j}{n(1-\omega_j)}$;
\item $\displaystyle \sum_{j=0}^{n-1}\abs{\ell_j(1)}^2=1$;
\item $\displaystyle w_j\abs{\omega_j-1}^2=\frac4{n^2}$ for every $j$: the
      weighted measure $\abs{u-1}^2\nu_K$, $\nu_K=\sum_j w_j\delta_{\omega_j}$,
      is uniform on the nodes.
\end{enumerate}
\end{lemma}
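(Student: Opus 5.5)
For (i) I will use the standard Lagrange formula for nodes that are the roots of a known polynomial. The nodes $\omega_j$ are exactly the zeros of $\Omega(u)=u^n+1$, so
\[
\ell_j(u)=\frac{\Omega(u)}{(u-\omega_j)\Omega'(\omega_j)},\qquad \Omega'(\omega_j)=n\omega_j^{n-1}=\frac{n\omega_j^n}{\omega_j}=-\frac{n}{\omega_j},
\]
where the last step uses $\omega_j^n=-1$. Evaluating at $u=1$ with $\Omega(1)=2$ gives
\[
\ell_j(1)=\frac{2}{(1-\omega_j)\,(-n/\omega_j)}=\frac{-2\omega_j}{n(1-\omega_j)},
\]
which is (i). The only care needed is the sign bookkeeping in $1-\omega_j$ versus $\omega_j-1$.

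For (iii) I will use $\abs{\omega_j-1}=\abs{e^{i\phi_j}-1}=2\abs{\sin(\phi_j/2)}$. Then
\[
w_j\abs{\omega_j-1}^2=\frac{1}{n^2}\csc^2\frac{\phi_j}{2}\cdot 4\sin^2\frac{\phi_j}{2}=\frac{4}{n^2}.
\]
No node has $\sin(\phi_j/2)=0$, because $\phi_j/2=(2j+1)\pi/(2n)\in(0,\pi)$. So the weighted measure is uniform on the nodes, giving (iii).

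For (ii) I will combine (i) with the same sine computation:
\[
\abs{\ell_j(1)}^2=\frac{4}{n^2\abs{1-\omega_j}^2}=\frac{1}{n^2}\csc^2\frac{\phi_j}{2}=w_j.
\]
Summing over $j$ and applying Lemma~\ref{lem:csc2} gives $\sum_j\abs{\ell_j(1)}^2=\sum_j w_j=1$. So (ii) reduces entirely to the normalization lemma already proved, and it also records the useful fact $\abs{\ell_j(1)}^2=w_j$, which is presumably why these masses are extremal.

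None of the three steps is a real obstacle. The one point to verify is $\Omega'(\omega_j)=-n/\omega_j$, which relies on $\omega_j^n=-1$ rather than $+1$. Everything else is direct substitution.
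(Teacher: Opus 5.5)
Your proposal is correct and follows the paper's proof essentially line by line. You use the node polynomial $u^n+1$ with derivative $-n/\omega_j$ at the nodes for (i), $\abs{1-\omega_j}^2=4\sin^2(\phi_j/2)$ together with the $\csc^2$ sum of Lemma~\ref{lem:csc2} for (ii), and direct substitution for (iii).
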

\begin{proof}
The node polynomial is $N(u)=\prod_m(u-\omega_m)=u^n+1$, so $N(1)=2$ and
$N'(\omega_j)=n\omega_j^{n-1}=-n/\omega_j$ since $\omega_j^n=-1$, giving
(i). For (ii), $\abs{\ell_j(1)}^2=4/(n^2\abs{1-\omega_j}^2)
=1/(n^2\sin^2(\phi_j/2))$; sum and apply \eqref{eq:csc2}. For (iii),
$\abs{\omega_j-1}^2=4\sin^2(\phi_j/2)$ against the definition of $w_j$.
\end{proof}

\subsection{The exact instance minimax}

\begin{theorem}[Exact adaptive minimax; the optimal polynomial is
Fej\'er]\label{thm:exactminimax}
Let $K\ge1$ and let $(M_K^\star,y_0^\star)$ be the extremal instance of
Definition~\ref{def:extremal}. Then:
\begin{enumerate}[label=(\roman*)]
\item \textbf{Lower bound.} Every polynomial $Q$ with $Q(1)=1$,
      $\deg Q\le K$ satisfies
      \begin{equation}\label{eq:exactlb}
      \norm{(L-I)Q(L)y_0^\star}\ \ge\ \frac1{K+1}.
      \end{equation}
      Hence every method in $\Aa_\infty$---any memory, adaptivity,
      safeguarding, or clairvoyance---satisfies $\norm{r(y_K)}\ge1/(K+1)$
      on this instance.
\item \textbf{Attainment and uniqueness.} Equality holds in
      \eqref{eq:exactlb} for exactly one polynomial: the Fej\'er
      polynomial $P^\star(u)=\frac1{K+1}\sum_{i=0}^{K}u^i$, i.e.\ the
      averaged-reflection estimator \eqref{eq:avref}. The instance minimax
      value equals $1/(K+1)$.
\item \textbf{Per-step floor.} On the \emph{same} instance, for every
      $k\le K$ and every $Q$ with $Q(1)=1$, $\deg Q\le k$,
      \begin{equation}\label{eq:perstep}
      \norm{(L-I)Q(L)y_0^\star}\ \ge\ \frac1{\sqrt{(K+1)(k+1)}},
      \end{equation}
      with equality again only for the degree-$k$ Fej\'er polynomial.
\end{enumerate}
\end{theorem}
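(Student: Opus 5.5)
The plan is to transport everything to the unit circle variable $u$ and reduce the statement to a weighted least-squares problem on the nodes $\omega_j$. First, by Lemma~\ref{lem:polyred} and Remark~\ref{rem:reduction}, it suffices to treat a fixed real polynomial $Q$ with $Q(1)=1$, $\deg Q\le k$. The lemma only uses that $J$ is linear, so it also covers the $\phi_j=\pi$ block, where the resolvent is $0=(1+\omega_j)/2$. Set $P(u)=Q((1+u)/2)$. This is a bijection between real polynomials $Q$ of degree $\le k$ with $Q(1)=1$ and real polynomials $P$ of degree $\le k$ with $P(1)=1$. As in the proof of Theorem~\ref{thm:bernstein}, at each eigenvalue $\zeta_j=(1+\omega_j)/2$ we have $(\zeta_j-1)Q(\zeta_j)=\tfrac12(\omega_j-1)P(\omega_j)$. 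On the real $2\times2$ block $j$, $L$ acts as multiplication by $\zeta_j$ (or its conjugate), and $P$ has real coefficients, so the block modulus is $\tfrac12\abs{\omega_j-1}\abs{P(\omega_j)}\sqrt{w_j}$. Summing squares and using Lemma~\ref{lem:identities}(iii) gives the key reduction
\[
\norm{(L-I)Q(L)y_0^\star}^2=\sum_{j}\frac{w_j\abs{\omega_j-1}^2}{4}\abs{P(\omega_j)}^2=\frac1{n^2}\sum_{j=0}^{n-1}\abs{P(\omega_j)}^2 .
\]

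Next I would evaluate the discrete sum exactly. Write $P(u)=\sum_{i=0}^k c_iu^i$ with $\sum_i c_i=P(1)=1$. For $0<\abs{m}<n$ we have $\sum_j\omega_j^m=e^{i\pi m/n}\sum_j e^{2\pi i jm/n}=0$. Since $k\le K=n-1$, every index difference $a-b$ satisfies $\abs{a-b}<n$, so the node sum is an exact Parseval identity: $\sum_j\abs{P(\omega_j)}^2=n\sum_i c_i^2$. By Cauchy--Schwarz under the constraint $\sum_i c_i=1$, we get $\sum_i c_i^2\ge 1/(k+1)$, with equality if and only if $c_i\equiv 1/(k+1)$. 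Hence
\[
\norm{(L-I)Q(L)y_0^\star}^2\ge \frac{1}{n(k+1)},
\]
with equality exactly for the degree-$k$ Fej\'er polynomial. This proves (iii) together with its uniqueness clause. Taking $k=K$ yields (i) and the uniqueness in (ii). Attainment is the same equality case. It is consistent with Theorem~\ref{thm:avref}, whose residual polynomial in $u$ is $(u^{n}-1)/(2n)$; this has modulus $1/n$ at every node (because $\omega_j^n=-1$), and so gives exactly $\sqrt{\sum_j w_j}/n=1/n$ by Lemma~\ref{lem:csc2}.

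As a cross-check for $k=K$, I would also record the Christoffel/Lagrange form, which is the route via Lemma~\ref{lem:identities}. Interpolation at $n$ nodes is exact for $\deg P\le n-1$, so $1=P(1)=\sum_j\ell_j(1)P(\omega_j)$. Cauchy--Schwarz with (ii) then gives $\sum_j\abs{P(\omega_j)}^2\ge1$. Equality forces $P(\omega_j)=\overline{\ell_j(1)}=2/(n(1-\omega_j))$, which is precisely the value of the Fej\'er kernel $(1-\omega_j^n)/(n(1-\omega_j))$. The main point needing care is not analytic but bookkeeping. I must verify that the real block structure (conjugate eigenvalue pairs, and the degenerate block $\phi_j=\pi$ for odd $n$) introduces no extra factor in the reduction identity. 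I must also confirm that the minimizer over complex coefficients is real, which it is (Fej\'er), so restricting to real $Q$ loses nothing.
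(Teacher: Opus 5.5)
Your proposal is correct and follows the paper's proof. It uses the same reduction to $\frac1{n^2}\sum_j\abs{P(\omega_j)}^2$ via Lemma~\ref{lem:identities}(iii), and the same node-Parseval plus Cauchy--Schwarz argument the paper uses for (iii). The only difference is organizational: you get (i) and (ii) as the case $k=K$ of that argument, whereas the paper proves them through the Lagrange identity $\sum_j\abs{\ell_j(1)}^2=1$, which you keep as a cross-check. Your note that Lemma~\ref{lem:polyred} needs only linearity of $J$, and so also covers the frozen $\phi_j=\pi$ block, is a worthwhile bit of care.
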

\begin{proof}
Write $\zeta=(1+u)/2$ and $P(u)=Q((1+u)/2)$, so $P(1)=1$, $\deg P\le K$,
and $(\zeta-1)Q(\zeta)=\frac{u-1}2P(u)$. By \eqref{eq:exteigs} and
Lemma~\ref{lem:identities}(iii),
\begin{equation}\label{eq:residsq}
\norm{(L-I)Q(L)y_0^\star}^2
=\sum_{j=0}^{n-1} w_j\abs{\zeta_j-1}^2\abs{P(\omega_j)}^2
=\frac1{n^2}\sum_{j=0}^{n-1}\abs{P(\omega_j)}^2 .
\end{equation}

(i) The values $q_j=P(\omega_j)$ range over all of $\C^n$ as $P$ ranges
over polynomials of degree $\le K=n-1$ (Lagrange interpolation is a
bijection), and $P(1)=\sum_j\ell_j(1)q_j=1$. By Cauchy--Schwarz and
Lemma~\ref{lem:identities}(ii),
\[
1=\abs[\Big]{\sum_j\ell_j(1)q_j}
\ \le\ \Big(\sum_j\abs{\ell_j(1)}^2\Big)^{1/2}
       \Big(\sum_j\abs{q_j}^2\Big)^{1/2}
=\Big(\sum_j\abs{q_j}^2\Big)^{1/2},
\]
so \eqref{eq:residsq} gives $\norm{(L-I)Q(L)y_0^\star}^2\ge1/n^2$.
Complex-valued $P$ were allowed, which only strengthens the lower bound;
the $\Aa_\infty$ claim follows by Lemma~\ref{lem:polyred} and
Remark~\ref{rem:reduction}.

(ii) Equality in Cauchy--Schwarz forces
$q_j=\overline{\ell_j(1)}/\sum_i\abs{\ell_i(1)}^2=\overline{\ell_j(1)}$.
The Fej\'er polynomial $P^\star(u)=\frac1n\sum_{i=0}^{n-1}u^i$ satisfies,
using $\omega_j^n=-1$ and Lemma~\ref{lem:identities}(i),
\[
P^\star(\omega_j)=\frac{\omega_j^n-1}{n(\omega_j-1)}
=\frac{-2}{n(\omega_j-1)}=\frac{2}{n(1-\omega_j)}
=\overline{\ell_j(1)},
\]
so it attains equality; the conjugate symmetry
$\omega_{n-1-j}=\overline{\omega_j}$ makes the interpolant real, and it is
unique because the equality case of Cauchy--Schwarz fixes the values
$\{q_j\}$, hence $P$. Its residual polynomial is that of the
averaged-reflection estimator (Remark~\ref{rem:avref}).

(iii) For $\deg P\le k$ write $P(u)=\sum_{i=0}^k c_iu^i$. Since
$\sum_{j=0}^{n-1}\omega_j^m=n$ if $n\mid m$ and $=0$ for
$0<\abs m\le k\le K$ (the grid aliases only at multiples of $n=K+1$),
\[
\sum_j\abs{P(\omega_j)}^2
=\sum_{i,l=0}^k c_i\overline{c_l}\sum_j\omega_j^{i-l}
=n\sum_{i=0}^k\abs{c_i}^2
\ge \frac{n}{k+1}\abs[\Big]{\sum_i c_i}^2
=\frac{n}{k+1},
\]
using $\sum_i c_i=P(1)=1$; plug into \eqref{eq:residsq}. Equality needs
$c_i\equiv1/(k+1)$, the degree-$k$ Fej\'er polynomial.
\end{proof}

\subsection{Consequences: the factor two and the logarithm are gone}

\begin{corollary}[The exact minimax value]\label{cor:exactV}
For the full class $\Aa_\infty$ (any memory, adaptivity, safeguarding) as
well as for the non-adaptive schedules of Theorem~\ref{thm:bernstein},
\[
V_K=\inf_{\text{methods}}\ \sup_{M\text{ max.\ monotone},\,d_0=1}
\norm{r(y_K)}\ =\ \frac1{K+1}.
\]
The upper bound is the averaged-reflection estimator
(Theorem~\ref{thm:avref}); the lower bound is
Theorem~\ref{thm:exactminimax}(i). In particular the factor-two window of
Corollary~\ref{cor:fact2} is closed, and on linear worst cases adaptivity
and memory buy nothing over the one-line offline estimator \eqref{eq:avredef}.
\end{corollary}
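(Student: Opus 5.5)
The corollary asserts $V_K=1/(K+1)$ both for $\Aa_\infty$ and for the non-adaptive schedules. The plan is to sandwich $V_K$ between a universal upper bound and a single-instance lower bound, each valid for both classes at once.

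\textbf{Upper bound.} We exhibit one method that is simultaneously a non-adaptive schedule and a member of $\Aa_\infty$: the averaged-reflection estimator $\hat y_K$ of \eqref{eq:avredef}. It is a fixed polynomial schedule with residual polynomial $(\zeta^{K+1}-1)/(2(K+1))$ in the variable $\zeta$ for $R$. It belongs to $\Aa_\infty$ through the query sequence $v_j=2x_{j-1}-v_{j-1}$ noted after Definition~\ref{def:class}. By Theorem~\ref{thm:avref}, every maximal monotone $M$ with $d_0=1$ gives $\norm{r(\hat y_K)}\le 1/(K+1)$, using exactly $K$ resolvent calls. This bound holds for nonlinear $M$ as well, so the supremum over all maximal monotone operators is at most $1/(K+1)$. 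Hence $V_K\le1/(K+1)$ for both classes.

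\textbf{Lower bound.} Take any method, either in $\Aa_\infty$ or a schedule. Non-adaptive schedules are a subclass of $\Aa_\infty$, or at least are polynomial methods in the sense of Lemma~\ref{lem:polyred}, so it suffices to treat polynomial methods. We test the method on the single instance $(M_K^\star,y_0^\star)$ of Definition~\ref{def:extremal}, which by Lemma~\ref{lem:csc2} is maximal monotone with $d_0=1$.

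Two points need checking. First, $M_K^\star$ is linear, so Lemma~\ref{lem:polyred} applies. Second, Lemma~\ref{lem:polyred} must cover the odd-$n$ block carrying $N_{\{0\}}$. On that block the resolvent is the zero map, which is still linear, so $J_K^\star=L$ is a linear operator and the polynomial reduction goes through unchanged.

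Whatever adaptive choices the method makes, its output after $K$ calls is $y_K=Q_K(L)y_0^\star$ with $Q_K(1)=1$ and $\deg Q_K\le K$. Theorem~\ref{thm:exactminimax}(i) then gives $\norm{r(y_K)}\ge1/(K+1)$. Because a single fixed instance defeats every method, the supremum over $M$ is at least $1/(K+1)$ for each method, and therefore so is the infimum over methods.

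\textbf{Combining.} The two bounds give $V_K=1/(K+1)$ for both classes. Comparing with Corollary~\ref{cor:fact2}, the lower side improves from $1/(2(K+1))$ to $1/(K+1)$, which closes the factor-two window.

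The only delicate step is this bookkeeping: confirming that the zero-resolvent block on the extremal instance is legitimate as a linear maximal monotone component, and that "$K$ oracle calls" matches $\deg Q_K\le K$ under the paper's indexing. Both checks are immediate from the proof of Lemma~\ref{lem:polyred}.
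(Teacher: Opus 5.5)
Your sandwich argument is the paper's own: the upper bound comes from the averaged reflection via Theorem~\ref{thm:avref}, and the lower bound from the extremal instance via Lemma~\ref{lem:polyred} and Theorem~\ref{thm:exactminimax}(i). The lower-bound half is sound, including your checks on the $N_{\{0\}}$ block and on $\deg Q_K\le K$ after $K$ calls.

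The upper-bound half, however, rests on the sentence ``This bound holds for nonlinear $M$ as well,'' and that is false. The telescoping identity \eqref{eq:telescope} pulls $R-I$ through the average $\frac1{K+1}\sum_jR^jy_0$. That step is legitimate only when $R$ is affine; for nonlinear $R$ one has $R(\hat y_K)\neq\frac1{K+1}\sum_jR^{j+1}y_0$. Here is a one-dimensional counterexample. Take $\Hh=\R$, $M=\partial(\frac13\abs{\cdot})$ and $y_0=1$, so that $Z=\{0\}$, $d_0=1$, and $J$ is soft-thresholding at $\frac13$. The reflection orbit is $1,\frac13,-\frac13,\frac13$, hence $\hat y_3=\frac13$ and $J(\hat y_3)=0$. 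This gives $\norm{r(\hat y_3)}=\frac13>\frac14=d_0/(K+1)$. Now keep $y_0=1$, set the threshold to $\lambda=1/(2m+1)$, and take $K=m^2+2m$. Then $\hat y_K=\lambda$ and $\norm{r(\hat y_K)}=1/(2\sqrt{K+1}-1)$, so over nonlinear operators the averaged reflection is not even $O(1/K)$. The error is inherited from the proof of Theorem~\ref{thm:avref}, but as written your upper bound only covers linear (more generally affine) $M$.

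The value $V_K=1/(K+1)$ itself survives if you change the upper-bound witness. Use Halpern's iteration on the reflected resolvent, $y_{k+1}=\frac1{k+2}y_0+\frac{k+1}{k+2}Ry_k$, which costs one resolvent call per step. It satisfies $\norm{y_K-Ry_K}\le 2\norm{y_0-z}/(K+1)$ for every nonexpansive $R$ and every $z\in\operatorname{Fix}R$ \cite{lieder2021convergence}. Since $r=\frac12(R-I)$, this gives $\norm{r(y_K)}\le d_0/(K+1)$ for every maximal monotone $M$. Its coefficients are fixed in advance and it lies in $\Aa_\infty$, so it witnesses the upper bound for both classes in the corollary. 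Kim's APPM \cite{kim2021accelerated} has the same properties and bound and would also work. Your lower bound then closes the sandwich. Alternatively, restrict the supremum to linear $M$. There your argument is correct verbatim, and the averaged reflection is exactly minimax; this is all that the corollary's final sentence about linear worst cases actually needs.
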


\begin{remark}[Relation to Park--Ryu]\label{rem:parkryu}
The exact value $1/(K+1)$ for span-type methods was first proved by
Park--Ryu \cite[Thm.~4.1]{park2022exact} by tracking the support growth on
an affine worst-case operator; see the related-work discussion for the
precise complementarity.
\end{remark}

The lower-bound side of Theorem~\ref{thm:exactminimax} is an instance of a
complete duality, which also identifies the Fej\'er kernel as the extremal
polynomial of a Chebyshev problem on the circle.

\begin{theorem}[The circle Chebyshev problem]\label{thm:chebyshev}
\[
Z_K\ :=\ \inf_{\substack{P(1)=1\\\deg P\le K}}\
\max_{\abs u=1}\ \abs{(u-1)P(u)}\ =\ \frac2{K+1},
\]
attained by the Fej\'er polynomial.
\end{theorem}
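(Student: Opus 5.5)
The proof has two halves: an upper bound from the Fej\'er polynomial and a lower bound via the discrete duality behind Theorem~\ref{thm:exactminimax}(i).

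\textbf{Upper bound.} Take $P^\star(u)=\frac1{K+1}\sum_{i=0}^K u^i$. Then
\[
(u-1)P^\star(u)=\frac{u^{K+1}-1}{K+1}.
\]
On $\abs u=1$ this has modulus $\frac{\abs{u^{K+1}-1}}{K+1}\le\frac2{K+1}$. Equality holds whenever $u^{K+1}=-1$, for instance at $u=e^{i\pi/(K+1)}$, as recorded in Remark~\ref{rem:fejer}. This gives $Z_K\le 2/(K+1)$, attained by the Fej\'er polynomial.

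\textbf{Lower bound.} Bound the maximum over the circle below by a weighted $\ell^2$ average over the nodes $\omega_j$, the roots of $u^n=-1$ with $n=K+1$. Since $\nu_K=\sum_j w_j\delta_{\omega_j}$ is a probability measure by Lemma~\ref{lem:csc2}, for any admissible $P$ we have
\[
\max_{\abs u=1}\abs{(u-1)P(u)}^2\ \ge\ \sum_j w_j\abs{\omega_j-1}^2\abs{P(\omega_j)}^2 .
\]
By Lemma~\ref{lem:identities}(iii) the right-hand side equals $\frac4{n^2}\sum_j\abs{P(\omega_j)}^2$.

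Next apply the Cauchy--Schwarz argument from the proof of Theorem~\ref{thm:exactminimax}(i). Lagrange interpolation gives $1=P(1)=\sum_j\ell_j(1)P(\omega_j)$. Combined with Lemma~\ref{lem:identities}(ii), $\sum_j\abs{\ell_j(1)}^2=1$, this yields $\sum_j\abs{P(\omega_j)}^2\ge1$. Hence
\[
\max_{\abs u=1}\abs{(u-1)P(u)}^2\ \ge\ \frac4{n^2},
\]
that is, $Z_K\ge 2/(K+1)$. Equivalently, this is Theorem~\ref{thm:exactminimax}(i) combined with the identity $\abs{\zeta-1}=\abs{u-1}/2$ and the fact that $\nu_K$ has unit mass. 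The bound holds for complex-coefficient $P$ as well, so it applies in particular to real $P$.

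\textbf{Main obstacle.} There is no real difficulty once Lemma~\ref{lem:identities} is available. The only delicate point is the first reduction: replacing the sup norm by the $L^2(\nu_K)$ norm, and checking that $\nu_K$ is a probability measure so that the average never exceeds the maximum.

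Optionally, uniqueness of the extremal polynomial follows the same way. Equality requires equality in the averaging step, so $\abs{(u-1)P(u)}$ attains its maximum $2/n$ at every node. It also requires equality in Cauchy--Schwarz, which forces $P(\omega_j)=\overline{\ell_j(1)}$, and by Theorem~\ref{thm:exactminimax}(ii) this identifies $P=P^\star$. The theorem only asks for attainment, so I would mention uniqueness briefly.
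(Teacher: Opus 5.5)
Your proposal is correct and is essentially the paper's own proof: the Fej\'er polynomial gives the upper bound $2/(K+1)$, and the lower bound comes from bounding the maximum below by the $\nu_K$-weighted average over the roots of $u^{K+1}=-1$, making the weights uniform via Lemma~\ref{lem:identities}(iii), and finishing with the Cauchy--Schwarz step $\sum_j\abs{P(\omega_j)}^2\ge1$ from Lemma~\ref{lem:identities}(ii). Your optional uniqueness remark is also sound: equality in Cauchy--Schwarz forces $P(\omega_j)=\overline{\ell_j(1)}$, hence $P=P^\star$, which matches Theorem~\ref{thm:exactminimax}(ii).
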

\begin{proof}
For $P^\star$, $(u-1)P^\star(u)=(u^{K+1}-1)/(K+1)$ has modulus
$\le2/(K+1)$ on $\abs u=1$. Conversely, for any admissible $P$, evaluating
on the extremal nodes and using Lemma~\ref{lem:identities}(iii) and the
Cauchy--Schwarz step of Theorem~\ref{thm:exactminimax}(i),
\[
\max_{\abs u=1}\abs{(u-1)P(u)}^2
\ \ge\ \sum_j w_j\abs{\omega_j-1}^2\abs{P(\omega_j)}^2
=\frac4{n^2}\sum_j\abs{P(\omega_j)}^2
\ \ge\ \frac4{n^2}. \qedhere
\]
\end{proof}

\begin{remark}[Christoffel duality; no gap; where the logarithm
lives]\label{rem:christoffel}
For a probability measure $\nu$ on the circle, the quantity
$\lambda_K(\nu)=\inf_{P(1)=1}\int\abs{u-1}^2\abs{P(u)}^2\,d\nu(u)$ is the
Christoffel function of the weight $\abs{u-1}^2\nu$ at $u=1$, and the
instance minimax of the skew-adjoint operator with spectral measure $\nu$
is $\frac12\lambda_K(\nu)^{1/2}$. Theorem~\ref{thm:exactminimax} says the
roots-of-$(-1)$ measure $\nu_K$ of Definition~\ref{def:extremal} maximizes
$\lambda_K$ over all probability measures, with
$\lambda_K(\nu_K)=4/(K+1)^2$; Theorem~\ref{thm:chebyshev} says the max
over measures equals the min over polynomials of the worst-case
value---there is no duality gap (as Sion's minimax theorem guarantees).
The maximizing measure is unique: any optimal $\nu$ must charge only the
equioscillation set $\{\abs{(u-1)P^\star(u)}=2/(K+1)\}=\{\omega_j\}$ of the
Fej\'er extremal, and the stationarity conditions
$\sum_jw_j(\omega_j-1)V(\omega_j)=-\frac2{K+1}V(1)$ ($\deg V\le K$) then
force $w_j=(K+1)^{-2}\csc^2(\phi_j/2)$. Two natural competitors lose
badly: the Lebesgue measure gives
$\lambda_K=12/((K+1)(K+2)(K+3))$---the optimal coefficients are parabolic,
not constant, a full factor $K$ off; and the roots-of-unity instance of
Section~\ref{sec:rotation}, whose masses are $\propto\csc\theta_j$, gives
$\lambda_K\asymp(K\log K)^{-2}$. The exponent $1$ of $\csc$ is
subcritical; the extremal exponent is $2$. This is the precise mechanism
behind the logarithmic slack removed by Theorem~\ref{thm:exactminimax}.
\end{remark}

\subsection{The roots-of-unity instance and the logarithmic
artifact}\label{sec:rotation}

The extremal instance of Definition~\ref{def:extremal} was found by
optimizing the Christoffel value over spectral measures. The more
symmetric first candidate---equal-harmonic rotation spectra with
$\csc$-distributed masses---loses a logarithm, and it is instructive to
keep its exact analysis and see why.

\subsubsection{The operator and the interpolation identity}

\begin{definition}[The barrier operator]\label{def:barrier}
For $K\ge1$ let $N=K+1$, $\omega=e^{2\pi i/(K+2)}$, and
$\theta_j=\pi j/(K+2)$, $j=1,\dots,N$. Let $M_K$ on $\R^{2N}$ be the direct
sum of skew-symmetric blocks $x\mapsto\tan\theta_j
\big(\begin{smallmatrix}0&-1\\1&0\end{smallmatrix}\big)x$, so that $L=J_K$
has eigenvalues
\begin{equation}\label{eq:eigs}
\zeta_j=\cos\theta_j\,e^{i\theta_j}=\frac{1+\omega^j}{2},
\qquad j=1,\dots,N,
\end{equation}
each with multiplicity two. Note $\abs{\zeta_j-1}=\sin\theta_j$.
\end{definition}

\begin{lemma}[The exact Lagrange identity]\label{lem:lagrange}
Let $\ell_j$ be the Lagrange basis on the nodes $\{\zeta_j\}_{j=1}^N$ of
\eqref{eq:eigs}: $\ell_j(\zeta_i)=\delta_{ij}$, $\deg\ell_j=N-1=K$. Then
\begin{equation}\label{eq:lagrange}
\ell_j(1)=-\omega^j,\qquad\text{in particular}\qquad \abs{\ell_j(1)}=1,
\end{equation}
and every polynomial $Q$ with $\deg Q\le K$ satisfies
\begin{equation}\label{eq:interp}
Q(1)=-\sum_{j=1}^{N}\omega^j\,Q(\zeta_j).
\end{equation}
\end{lemma}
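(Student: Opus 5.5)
The plan is to reduce to the circle variable exactly as in the proof of Lemma~\ref{lem:identities}, and then read off the Lagrange values from the node polynomial. Lagrange interpolation is invariant under affine reparametrization. So put $u=2\zeta-1$. The nodes $\zeta_j=(1+\omega^j)/2$ become $u_j=\omega^j$, $j=1,\dots,N$, and the evaluation point $\zeta=1$ becomes $u=1$. Since $N=K+1$ and $\omega=e^{2\pi i/(K+2)}$, the nodes $\{\omega^j\}_{j=1}^{K+1}$ are exactly the $(K+2)$-th roots of unity other than $1$. The Lagrange basis $\ell_j$ in $\zeta$ corresponds to the Lagrange basis $\tilde\ell_j$ in $u$ on these nodes, with $\ell_j(1)=\tilde\ell_j(1)$.

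The node polynomial is therefore
\[
N(u)=\prod_{m=1}^{K+1}(u-\omega^m)=\frac{u^{K+2}-1}{u-1}=1+u+\dots+u^{K+1},
\]
so $N(1)=K+2$. To get $N'(\omega^j)$ I will differentiate the identity $(u-1)N(u)=u^{K+2}-1$ and evaluate at the root $u=\omega^j$. Because $N(\omega^j)=0$, this gives
\[
(\omega^j-1)N'(\omega^j)=(K+2)\,\omega^{j(K+1)}=(K+2)\,\omega^{-j}.
\]
Substituting into the standard formula $\tilde\ell_j(1)=N(1)/\bigl((1-\omega^j)N'(\omega^j)\bigr)$ yields
\[
\ell_j(1)=\frac{K+2}{(1-\omega^j)\,(K+2)\,\omega^{-j}/(\omega^j-1)}=-\omega^j .
\]
Its modulus is therefore $1$.

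The interpolation formula \eqref{eq:interp} then follows directly. Any $Q$ with $\deg Q\le K=N-1$ is reproduced exactly by its Lagrange interpolant on the $N$ distinct nodes $\zeta_j$, so $Q=\sum_j Q(\zeta_j)\ell_j$. Evaluating at $\zeta=1$ and inserting $\ell_j(1)=-\omega^j$ gives $Q(1)=-\sum_{j=1}^{N}\omega^jQ(\zeta_j)$.

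None of these steps is a real obstacle. The only point that needs care is the sign and exponent bookkeeping in $N'(\omega^j)$, namely using $\omega^{K+2}=1$ to rewrite $\omega^{j(K+1)}$ as $\omega^{-j}$. I will check the result on the case $K=1$: there the nodes are $\omega,\omega^2$ with $\omega=e^{2\pi i/3}$, and one should find $\tilde\ell_1(1)=(1-\omega^2)/(\omega-\omega^2)=-\omega$.
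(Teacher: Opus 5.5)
Your proof is correct and follows the paper's argument. Both use the affine substitution $u=2\zeta-1$ onto the nontrivial $(K+2)$-th roots of unity, the node polynomial $(u^{K+2}-1)/(u-1)$ with value $K+2$ at $u=1$, and the formula $\ell_j(1)=N(1)/((1-u_j)N'(u_j))$ reduced via $\omega^{K+2}=1$; the only cosmetic differences are that you work directly in $u$ (avoiding the paper's $2^{-N}$ factors) and obtain $N'(\omega^j)$ by differentiating $(u-1)N(u)=u^{K+2}-1$ instead of by the quotient rule.
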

\begin{proof}
Set $w=2z-1$, so the nodes are $w_j=\omega^j$, the $(K+2)$nd roots of unity
except $1$. The node polynomial is
\[
N(z)=\prod_{j=1}^{N}(z-\zeta_j)
=2^{-N}\prod_{j=1}^{N}(w-\omega^j)
=2^{-N}\,\frac{w^{K+2}-1}{w-1}.
\]
Hence $N(1)=2^{-N}\lim_{w\to1}\frac{w^{K+2}-1}{w-1}=(K+2)2^{-N}$, and since
$w_j^{K+2}=1$,
\[
N'(\zeta_j)
=2\cdot2^{-N}\,\frac{(K+2)w_j^{K+1}(w_j-1)-(w_j^{K+2}-1)}{(w_j-1)^2}
=2^{1-N}(K+2)\,\frac{w_j^{K+1}}{w_j-1}.
\]
With $1-\zeta_j=(1-w_j)/2$,
\[
\ell_j(1)=\frac{N(1)}{(1-\zeta_j)N'(\zeta_j)}
=\frac{(K+2)2^{-N}}{\dfrac{1-w_j}2\cdot
2^{1-N}(K+2)\dfrac{w_j^{K+1}}{w_j-1}}
=\frac{1}{-w_j^{K+1}}=-\omega^j,
\]
using $w_j^{K+2}=1$. Then \eqref{eq:interp} is Lagrange interpolation at
$z=1$, exact for $\deg Q\le N-1=K$.
\end{proof}

\subsubsection{The exact instance minimax}

\begin{theorem}[Adaptive lower bound; exact instance minimax]\label{thm:mainlb}
Let $K\ge1$, $M_K$ as in Definition~\ref{def:barrier}, and
\begin{equation}\label{eq:masses}
\Lambda_K=\sum_{j=1}^{N}\csc\theta_j,
\qquad
a_j^2=\frac{\csc\theta_j}{\Lambda_K},
\end{equation}
and let $y_0^\star\in\R^{2N}$ have norm $a_j$ on block $j$ (so
$\norm{y_0^\star}=1=d_0$). Then:
\begin{enumerate}[label=(\roman*)]
\item \textbf{Lower bound.} For every real polynomial $Q$ with $Q(1)=1$,
      $\deg Q\le K$,
      \begin{equation}\label{eq:lb}
      \norm{(L-I)Q(L)y_0^\star}\ \ge\ \frac1{\Lambda_K}.
      \end{equation}
      Hence every method in $\Aa_\infty$---any memory, adaptivity,
      safeguarding, or clairvoyance---satisfies
      $\norm{r(y_K)}\ge 1/\Lambda_K$ on this instance.
\item \textbf{Sharpness.} There is a real polynomial $\bar Q$, $\deg\bar
      Q\le K$, $\bar Q(1)=1$, attaining equality in \eqref{eq:lb}; the
      instance minimax value equals $1/\Lambda_K$.
\item \textbf{Asymptotics.}
      \begin{equation}\label{eq:lamasymp}
      \Lambda_K=\Big(\frac{2}{\pi}+o(1)\Big)\,K\log K,
      \qquad\text{so}\qquad
      \norm{r(y_K)}\ge \Big(\frac{\pi}{2}+o(1)\Big)\frac{d_0}{K\log K}.
      \end{equation}
\end{enumerate}
\end{theorem}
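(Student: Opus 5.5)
We follow the template of Theorem~\ref{thm:exactminimax}, replacing Cauchy--Schwarz by a weighted $\ell^1$--$\ell^\infty$ type duality adapted to the $\csc$ masses. Write the residual blockwise: since $L$ acts on block $j$ as multiplication by $\zeta_j$ and $\abs{\zeta_j-1}=\sin\theta_j$,
\[
\norm{(L-I)Q(L)y_0^\star}^2=\sum_{j=1}^N a_j^2\sin^2\theta_j\,\abs{Q(\zeta_j)}^2
=\frac1{\Lambda_K}\sum_{j=1}^N \sin\theta_j\,\abs{Q(\zeta_j)}^2 .
\]
By Lemma~\ref{lem:lagrange}, $1=Q(1)=-\sum_j\omega^jQ(\zeta_j)$, so $1\le\sum_j\abs{Q(\zeta_j)}$. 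Cauchy--Schwarz with weights $\sin\theta_j$ and $\csc\theta_j$ gives
\[
1\le\Big(\sum_j\csc\theta_j\Big)\Big(\sum_j\sin\theta_j\abs{Q(\zeta_j)}^2\Big)
=\Lambda_K\cdot\Lambda_K\norm{(L-I)Q(L)y_0^\star}^2,
\]
which is \eqref{eq:lb}. The $\Aa_\infty$ statement then follows from Lemma~\ref{lem:polyred} and Remark~\ref{rem:reduction}. Complex $Q$ are allowed throughout, which only strengthens the bound.

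For (ii), we read off the equality case. We need $\abs{\sum_j\omega^jQ(\zeta_j)}=\sum_j\abs{Q(\zeta_j)}$, which holds when the phases align, $Q(\zeta_j)=-\overline{\omega^j}\,c_j$ with $c_j\ge0$. Cauchy--Schwarz equality additionally requires $c_j\propto\csc\theta_j$. This suggests $q_j:=Q(\zeta_j)=-\omega^{-j}\csc\theta_j/\Lambda_K$. Since the $N=K+1$ nodes are distinct, Lagrange interpolation defines a unique $\bar Q$ of degree $\le K$ with these values, and $\bar Q(1)=-\sum_j\omega^jq_j=\sum_j\csc\theta_j/\Lambda_K=1$. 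The main point to verify is that $\bar Q$ is real. The nodes satisfy $\zeta_{N+1-j}=\overline{\zeta_j}$ (because $\omega^{K+2-j}=\overline{\omega^j}$), and $\theta_{N+1-j}=\pi-\theta_j$ gives equal $\csc$ weights. Hence $q_{N+1-j}=-\omega^{j}\csc\theta_j/\Lambda_K=\overline{q_j}$, so the data are conjugate-symmetric and the interpolant has real coefficients. Substituting back gives $\sum_j\sin\theta_j\abs{q_j}^2=\Lambda_K^{-2}\sum_j\csc\theta_j=1/\Lambda_K$, hence equality $1/\Lambda_K$.

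For (iii), we prove $\Lambda_K=\sum_{j=1}^{K+1}\csc(\pi j/(K+2))\sim\frac{2}{\pi}K\log K$. By the symmetry $j\leftrightarrow K+2-j$, $\Lambda_K$ is essentially $2\sum_{j\le (K+2)/2}\csc(\pi j/(K+2))$. On this range we use $\csc x=1/x+O(x)$ for $x\in(0,\pi/2]$. This gives $2\cdot\frac{K+2}{\pi}H_{\lfloor (K+2)/2\rfloor}+O(K)=\frac{2}{\pi}K\log K+O(K)$, since the $O(x)$ remainders sum to $O(K)$ (they form a Riemann sum). The lower bound then follows from (i).

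The main obstacle is (ii): the extremizer must be both admissible and real, and the conjugate-symmetry check above is the step to do carefully, including the index bookkeeping for the self-conjugate middle node when $K$ is even. When $K$ is even, $N$ is odd and the middle node $j=(K+2)/2$ has $\omega^j=-1$ and $\zeta_j=0$. Its value $q_j=\csc(\pi/2)/\Lambda_K=1/\Lambda_K$ is real, so it is consistent with realness.
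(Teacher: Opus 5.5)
Your proof is correct and follows the paper's route: the Lagrange identity $\ell_j(1)=-\omega^j$, Cauchy--Schwarz with weights $\csc\theta_j$ and $\sin\theta_j$, the same extremal values $q_j=-\omega^{-j}\csc\theta_j/\Lambda_K$, and the same conjugate-symmetry argument for realness. For (iii) you fold by symmetry and use $\csc x=1/x+O(x)$, whereas the paper compares $\csc x-1/x$ with the cotangent partial fraction; the difference does not matter, since both give $\Lambda_K=\frac{2}{\pi}K\log K+O(K)$.
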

\begin{proof}
(i) On block $j$, $(L-I)Q(L)$ acts as complex multiplication by
$(\zeta_j-1)Q(\zeta_j)$, so with $\rho_j=a_j\sin\theta_j\,\abs{Q(\zeta_j)}$,
\[
\norm{(L-I)Q(L)y_0^\star}^2=\sum_{j=1}^N a_j^2\sin^2\theta_j\,
\abs{Q(\zeta_j)}^2=\sum_{j=1}^N\rho_j^2 .
\]
By \eqref{eq:interp} and $\abs{\omega^j}=1$,
$1=\abs{Q(1)}\le\sum_j\abs{Q(\zeta_j)}
=\sum_j \rho_j\,(a_j\sin\theta_j)^{-1}$, and Cauchy--Schwarz gives
\[
1\ \le\ \Big(\sum_j\rho_j^2\Big)^{1/2}
\Big(\sum_j\frac1{a_j^2\sin^2\theta_j}\Big)^{1/2}
=\norm{(L-I)Q(L)y_0^\star}\,\Lambda_K,
\]
because $a_j^2\sin^2\theta_j=\sin\theta_j/\Lambda_K$, so
$\sum_j 1/(a_j^2\sin^2\theta_j)=\Lambda_K\sum_j\csc\theta_j=\Lambda_K^2$.
The $\Aa_\infty$ claim follows by Lemma~\ref{lem:polyred} and
Remark~\ref{rem:reduction}.

(ii) Minimize $\Phi(Q)=\sum_j a_j^2\sin^2\theta_j\abs{Q(\zeta_j)}^2$ subject
to the single linear constraint $\sum_j(-\omega^j)Q(\zeta_j)=1$
(cf.~\eqref{eq:interp}) over complex values $Q_j=Q(\zeta_j)$. By
Cauchy--Schwarz the constrained minimum is
\[
\Phi_{\min}=\Big(\sum_j\frac{\abs{\omega^j}^2}{a_j^2\sin^2\theta_j}
\Big)^{-1}=\frac1{\Lambda_K^2},
\]
attained at $Q_j=\overline{\lambda_j}/(a_j^2\sin^2\theta_j\,\Lambda_K^2)$
with $\lambda_j=-\omega^j$, i.e.\
$Q_j=-\omega^{-j}/(a_j^2\sin^2\theta_j\,\Lambda_K^2)$. The nodes are closed
under conjugation, $\zeta_{K+2-j}=\overline{\zeta_j}$, and
$\theta_{K+2-j}=\pi-\theta_j$ gives $a_{K+2-j}=a_j$, so
$Q_{K+2-j}=\overline{Q_j}$; the unique interpolating polynomial of degree
$\le K$ through conjugate-symmetric values on conjugate-symmetric real-coefficient
nodes is real, and \eqref{eq:interp} gives $\bar Q(1)=1$.

(iii) With $n=K+2$,
$\sum_{j=1}^{n-1}\csc(\pi j/n)=\frac{2n}{\pi}\big(\log n+O(1)\big)$
by comparison of $\csc x-1/x$ with the cotangent partial fraction, and the
claim follows.
\end{proof}

\begin{remark}[The $\csc$ versus $\csc^2$ artifact]\label{rem:artifact}
The rotation instance of Theorem~\ref{thm:mainlb} has instance minimax
$1/\Lambda_K=\Theta(1/(K\log K))$, a factor $\Theta(\log K)$ \emph{below}
the hardest instance $1/(K+1)$ of Theorem~\ref{thm:exactminimax}. The
reason is visible in the masses: the rotation instance spreads
$a_j^2=\csc\theta_j/\Lambda_K\approx1/(2j\log K)$ across the whole harmonic
ladder of angles, while the extremal masses $w_j\propto\csc^2(\phi_j/2)$
concentrate a constant fraction at the two nodes nearest the fixed point:
$w_0=w_{n-1}\to4/\pi^2$ as $K\to\infty$, so
$w_0+w_{n-1}\to8/\pi^2\approx81\%$, with the remaining $\csc^2$ tail
keeping all $K+1$ nodes charged---precisely the bijection threshold at
which degree-$K$ interpolation can no longer annihilate them. Exponent $1$
is subcritical; exponent $2$ is critical. The entire adaptive difficulty
of rotation spectra is carried by the \emph{critical shells}
$\dist(\zeta,1)\asymp1/K$ around the eigenvalue $1$, and the old question
whether the $\log K$ of \eqref{eq:lamasymp} is intrinsic is resolved in
the negative by Theorem~\ref{thm:exactminimax}.
\end{remark}

\begin{remark}[Adaptivity on a fixed instance]\label{rem:fixedinstance}
On any fixed linear instance, the instance minimax over adaptive methods
equals that over non-adaptive ones: both are the minimum of the same
quadratic form over $\{Q:Q(1)=1,\ \deg Q\le K\}$. What adaptivity
(AA/GMRES) buys is \emph{per-instance optimality without prior knowledge}
of the spectrum: full-memory AA attains the instance minimax at every step
on every linear instance, while any fixed schedule is optimal only for the
spectra it was tuned to. On the extremal instance the two collapse: the
optimal adaptive polynomial is the non-adaptive Fej\'er kernel
(Theorem~\ref{thm:exactminimax}(ii)).
\end{remark}

\section{The spectral phase transition}\label{sec:phase}

Sections~\ref{sec:uniform} and~\ref{sec:adaptive} describe the worst case
over all spectra. We now resolve \emph{which} rotation spectra admit
acceleration beyond $1/K$. Write the rotation angles of the spectrum as
$\theta\in(0,\pi)$ (the eigenvalue is
$\zeta(\theta)=\cos\theta\,e^{i\theta}=(1+e^{2i\theta})/2$). Since
$\abs{\zeta(\theta)-1}=\sin\theta$, the quantity that governs the residual
is the gap of the spectrum from the eigenvalue $1$:
\begin{equation}\label{eq:floor}
s=\dist(1,\sigma(L))
=\inf_{\theta\text{ in the spectrum}}\sin\theta
=\sin\big(\inf\min(\theta,\pi-\theta)\big),
\end{equation}
the \emph{spectral floor}. Note that both $\theta\approx0$ and
$\theta\approx\pi$ place the eigenvalue near $1$ (near-fixed directions):
the floor \eqref{eq:floor} is two-sided.

\begin{theorem}[Escape: Jackson kernel on floored spectra]\label{thm:jackson}
Let $N=\lfloor(K+2)/2\rfloor$ and
$q_J(u)=\big(\frac1N\sum_{j=0}^{N-1}u^j\big)^2$ (the squared
Fej\'er/Jackson kernel, $\deg q_J=2N-2\le K$, $q_J(1)=1$). For every
rotation spectrum with floor $s=\dist(1,\sigma(L))$ and every $y_0$,
\begin{equation}\label{eq:jackson}
\norm{r(y_K)}\ \le\ \frac{4\,d_0}{N^2 s}
\ \le\ \frac{16\,d_0}{(K+1)^2 s}.
\end{equation}
In particular $\norm{r(y_K)}=o(d_0/K)$ whenever $sK\to\infty$, and
$q_J$ improves on the universal envelope $1/(K+1)$ as soon as
$s\gtrsim 16/(K+1)$.
\end{theorem}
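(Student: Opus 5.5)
We reduce to a scalar estimate on the spectrum, in the spirit of Lemma~\ref{lem:polyred} and the proof of Theorem~\ref{thm:bernstein}. The method is the non-adaptive schedule $y_K=Q(L)y_0$ with $Q((1+u)/2)=q_J(u)$. This $Q$ is a polynomial of degree $\le K$ in $\zeta=(1+u)/2$ with $Q(1)=q_J(1)=1$, and it is realised by the averaged-reflection-type orbit, since $u$ corresponds to $R=2L-I$. For a rotation spectrum, $L$ is an orthogonal direct sum of $2\times2$ blocks, each acting as multiplication by $\zeta=(1+u)/2$ with $u=e^{2i\theta}$ on the unit circle. On each block,
\[
(L-I)Q(L)\ \text{acts as}\ \tfrac{u-1}{2}q_J(u).
\]
Since $\zer M=\{0\}$ when $s>0$, we have $d_0=\norm{y_0}$. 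Hence
\[
\norm{r(y_K)}\le \sup_{u\in\sigma}\big|\tfrac{u-1}{2}\big|\,|q_J(u)|\,\norm{y_0},
\]
where the supremum runs over the spectral values, all of which satisfy $|u-1|=2\sin\theta\ge 2s$. For a general normal operator we would use the spectral theorem in the same way.

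The key scalar estimate comes from the closed form $\frac1N\sum_{j<N}u^j=\frac{u^N-1}{N(u-1)}$. It gives
\[
|q_J(u)|=\frac{|u^N-1|^2}{N^2|u-1|^2}\le\frac{4}{N^2|u-1|^2}.
\]
Therefore
\[
\tfrac12|u-1|\,|q_J(u)|\le\frac{2}{N^2|u-1|}\le\frac{2}{N^2\cdot 2s}=\frac{1}{N^2 s}.
\]
This is even better than the stated $4/(N^2s)$, so the first inequality holds with room to spare. For the second, $N=\lfloor (K+2)/2\rfloor\ge (K+1)/2$, so $N^2\ge (K+1)^2/4$ and $4/(N^2 s)\le 16/((K+1)^2 s)$. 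We also check $\deg q_J=2N-2\le K$ directly from the floor.

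For the consequences, write $16d_0/((K+1)^2s)=\frac{d_0}{K+1}\cdot\frac{16}{(K+1)s}$. This is $o(d_0/K)$ when $sK\to\infty$, and it is at most $d_0/(K+1)$ exactly when $s\ge 16/(K+1)$.

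The only step needing care is the identification of $|\zeta-1|$ with $\frac12|u-1|=\sin\theta$ together with the two-sided floor. Both $\theta$ near $0$ and $\theta$ near $\pi$ must be controlled, and the bound $|u-1|\ge 2s$ handles both uniformly. We also need $d_0=\norm{y_0}$; if $1\in\sigma$ then $s=0$ and the statement is vacuous. Everything else is the routine Fej\'er-kernel bound.
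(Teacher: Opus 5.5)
Your proof is correct, but it analyses a different polynomial method from the one in the paper's proof.

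\textbf{The paper's route.} The paper evaluates the kernel in the resolvent variable itself, taking $y_K=q_J(L)y_0$. This is a positively weighted average of the PPM orbit $L^m y_0$, $m\le 2N-2$. It then bounds
\[
\abs{(1-\zeta)q_J(\zeta)}=\frac{\abs{\zeta^N-1}^2}{N^2\abs{\zeta-1}}\le\frac{4}{N^2 s},
\]
using only $\abs{\zeta}\le 1$ and $\abs{\zeta-1}\ge s$.

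\textbf{Your route.} You evaluate the kernel in the reflected variable $u=2\zeta-1$, i.e.\ $y_K=q_J(R)y_0$ built from the reflection orbit. This matches the letter $u$ in the statement and the convention of Theorems~\ref{thm:bernstein} and~\ref{thm:exactminimax}. Both choices are admissible degree-$\le K$ methods with value $1$ at the fixed point. Yours gains a factor $4$: the residual multiplier is $(u-1)/2$ and $\abs{u-1}=2\abs{\zeta-1}\ge 2s$. So you get $1/(N^2 s)\le 4/((K+1)^2 s)$, and the crossover with the envelope $1/(K+1)$ drops to $s\gtrsim 4/(K+1)$.

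\textbf{What each buys.} The paper's version needs only $\abs{\zeta}\le1$, so it applies verbatim to any normal contraction. Yours needs $\abs{u}\le 1$, i.e.\ that $R$ is nonexpansive on the spectrum. That is exactly the extra geometry that sharpens the constant. It holds for every normal maximal monotone operator, since $\sigma(L)\subseteq\{\abs{\zeta-\tfrac12}\le\tfrac12\}$. So your argument also carries over to Theorem~\ref{thm:normal}, provided you justify $\abs{u^N-1}\le 2$ from $\abs{u}\le1$ rather than from $\abs{u}=1$.
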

\begin{proof}
For $\abs{\zeta}\le1$,
\[
\abs{(1-\zeta)q_J(\zeta)}
=\frac{\abs{1-\zeta}}{N^2}\,\abs{\frac{\zeta^N-1}{\zeta-1}}^2
=\frac{\abs{\zeta^N-1}^2}{N^2\abs{\zeta-1}}
\le \frac{4}{N^2\abs{\zeta-1}},
\]
since $\abs{\zeta^N-1}\le\abs{\zeta}^N+1\le2$. Resolvent eigenvalues satisfy
$\abs{\zeta}\le1$, so every block contributes at most $4/(N^2 s)$, and the
claim follows by summing squares against $\norm{y_0}=d_0$.
\end{proof}

\begin{theorem}[Barrier at the critical scale; exact constant]\label{thm:barrier}
Let $s\le\sin(\pi/(2(K+1)))$. The extremal instance of
Definition~\ref{def:extremal} has
$\dist(1,\sigma(L))=\min_j\abs{\zeta_j-1}=\sin(\pi/(2(K+1)))\ge s$ and
instance minimax \emph{exactly} $1/(K+1)$: every method in $\Aa_\infty$
satisfies $\norm{r(y_K)}\ge1/(K+1)$ on it. This is the largest possible
value for any instance, of any floor, since the averaged reflection
attains $1/(K+1)$ universally (Theorem~\ref{thm:avref}); the rotation
instance of Theorem~\ref{thm:mainlb} gives the same barrier up to a
$\Theta(\log K)$ factor at the coarser floor $\sin(\pi/(K+2))$.
\end{theorem}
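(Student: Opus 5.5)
The statement is essentially a repackaging of Theorem~\ref{thm:exactminimax} together with a spectral computation, so I will prove it in three short steps: compute the floor of the extremal instance, invoke the exact instance minimax, and compare with the universal upper bound.

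\emph{Step 1: the floor.} By \eqref{eq:exteigs}, $\zeta_j-1=(\omega_j-1)/2$, so $\abs{\zeta_j-1}=\abs{\sin(\phi_j/2)}$ with $\phi_j/2=(2j+1)\pi/(2n)\in(0,\pi)$ and $n=K+1$. The sine on $(0,\pi)$ is symmetric about $\pi/2$ and increases with distance from the endpoints. The half-angles nearest an endpoint are $j=0$ and $j=n-1$, giving $\pi/(2n)$ and $\pi-\pi/(2n)$. Hence $\min_j\abs{\zeta_j-1}=\sin(\pi/(2(K+1)))$.

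For odd $n$, one block has $\phi_j=\pi$, where the resolvent vanishes. Its eigenvalue is $0$, at distance $1$ from $1$, so this block does not affect the minimum. Since $\sigma(L)=\{\zeta_j,\bar\zeta_j\}$ is finite and conjugation preserves distance to $1$, $\dist(1,\sigma(L))$ equals this minimum, which is $\ge s$ by hypothesis.

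\emph{Step 2: the barrier.} Theorem~\ref{thm:exactminimax}(i) gives $\norm{(L-I)Q(L)y_0^\star}\ge 1/(K+1)$ for every admissible $Q$. Lemma~\ref{lem:polyred} and Remark~\ref{rem:reduction} transfer this bound to every method in $\Aa_\infty$. Theorem~\ref{thm:exactminimax}(ii) shows the bound is attained by the Fej\'er polynomial, so the instance minimax is exactly $1/(K+1)$. Lemma~\ref{lem:csc2} gives $d_0=1$, so this is a normalized instance.

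\emph{Step 3: maximality and comparison.} Theorem~\ref{thm:avref} gives $\norm{r(\hat y_K)}\le d_0/(K+1)$ for every maximal monotone $M$. Therefore no instance with $d_0=1$, whatever its floor, can have instance minimax above $1/(K+1)$, and the extremal instance attains this largest value.

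For the rotation instance of Definition~\ref{def:barrier}, $\abs{\zeta_j-1}=\sin\theta_j$ with $\theta_j=\pi j/(K+2)$, $j=1,\dots,K+1$. The minimum is at $j=1$ or $j=K+1$, giving floor $\sin(\pi/(K+2))$. Theorem~\ref{thm:mainlb}(iii) gives its instance minimax $1/\Lambda_K=\Theta(1/(K\log K))$, which is $1/(K+1)$ up to a $\Theta(\log K)$ factor.

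\emph{Main obstacle.} Nothing here is deep. The only care point is the floor computation in Step~1: one must handle the two-sided floor (angles near both $0$ and $\pi$) and the odd-$n$ block with $\phi_j=\pi$. Both are settled by the symmetry $\phi_{n-1-j}=2\pi-\phi_j$, i.e.\ $\omega_{n-1-j}=\bar\omega_j$.
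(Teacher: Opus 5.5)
Your proposal is correct and follows the paper's own argument, which is your Steps~1--2 compressed into one line: $\abs{\zeta_j-1}=\abs{\omega_j-1}/2=\sin(\phi_j/2)$ is minimized at $j=0$, and then Theorem~\ref{thm:exactminimax}(i) applies; your odd-$n$ block check, attainment via part~(ii), and maximality via Theorem~\ref{thm:avref} only make explicit what the paper leaves to the statement itself. One small citation slip: the exact value $1/\Lambda_K$ for the rotation instance comes from Theorem~\ref{thm:mainlb}(i)--(ii), while part~(iii) supplies only the $\Theta(K\log K)$ asymptotics of $\Lambda_K$.
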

\begin{proof}
$\abs{\zeta_j-1}=\abs{\omega_j-1}/2=\sin(\phi_j/2)=\sin((2j+1)\pi/(2(K+1)))$,
minimized at $j=0$; apply Theorem~\ref{thm:exactminimax}(i).
\end{proof}

\begin{corollary}[The critical spectral scale]\label{cor:critical}
For rotation spectra, the achievable residual per $K$ oracle calls undergoes
a phase transition at the critical scale $s\asymp 1/K$:
\begin{itemize}
\item $sK=O(1)$: the uniform barrier $\Theta(1/K)$ cannot be beaten by
      any residual-polynomial method, any memory, any adaptivity; at
      $s=\sin(\pi/(2(K+1)))$ the barrier value is exactly $1/(K+1)$
      (Theorem~\ref{thm:barrier});
\item $sK\to\infty$: the barrier is escaped,
      $\norm{r(y_K)}=o(d_0/K)$, and for fixed $s>0$ restarted AA/GMRES
      converges geometrically with per-cycle factor given by the Chebyshev
      number of the arc $\{\zeta:\abs{\zeta-1}\ge s,\abs{\zeta}\le1\}$
      \cite{saad1986gmres,joubert1994convergence,eiermann1992hybrid}.
\end{itemize}
\end{corollary}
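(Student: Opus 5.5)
The corollary assembles Theorem~\ref{thm:barrier} (the $sK=O(1)$ side) with Theorem~\ref{thm:jackson} (the $sK\to\infty$ side), plus a standard GMRES/Chebyshev remark for fixed $s$. I would prove the three pieces in that order.

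\textbf{Barrier side.} Fix a constant $c$ and suppose $s\le c/K$. The goal is that $\sup\norm{r(y_K)}\ge c'/K$ over rotation spectra with floor at least $s$, for every method in $\Aa_\infty$. If $s\le\sin(\pi/(2(K+1)))$, I apply Theorem~\ref{thm:barrier} directly: the extremal instance has floor $\sin(\pi/(2(K+1)))\ge s$ and instance minimax $1/(K+1)$. By Lemma~\ref{lem:polyred} and Remark~\ref{rem:reduction}, this bound binds every adaptive method.

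For $s$ between $\sin(\pi/(2(K+1)))$ and $c/K$, I would rescale. Take the extremal instance for a smaller degree $K'=\lfloor K/m\rfloor$, where $m$ is the constant chosen so that its floor $\sin(\pi/(2(K'+1)))\ge s$; note $m\asymp 1/c$. A degree-$K$ method on that instance is still only controlled by degree $K$, so I cannot reuse Theorem~\ref{thm:exactminimax} verbatim. Instead I would invoke the per-step floor (Theorem~\ref{thm:exactminimax}(iii)) in reverse. Concretely, I would use the Bernstein argument of Theorem~\ref{thm:bernstein} restricted to the arc $\{\abs{u-1}\ge 2s\}$. The polynomial $g(u)=(u-1)P(u)$ with $g'(1)=1$ must be at least of order $1/K$ somewhere on the arc when $sK=O(1)$: if it were $\le \epsilon/K$ there, then Bernstein/Remez control on the small complementary arc of length $O(s)=O(1/K)$ would bound $\abs{g'(1)}$ by $O(\epsilon)$, a contradiction.

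I would also place a single rotation block at the maximizing angle, which defeats scheduled methods. For adaptive methods, the fixed-instance statement at the exact critical scale is what the paper claims, and the exact constant $1/(K+1)$ is asserted only there. So I would state the $sK=O(1)$ clause as ``$\Theta(1/K)$ at the critical floor,'' with the upper bound $1/(K+1)$ from Theorem~\ref{thm:avref}.

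\textbf{Escape side.} If $sK\to\infty$, Theorem~\ref{thm:jackson} gives $\norm{r(y_K)}\le 16d_0/((K+1)^2s)=o(d_0/K)$ immediately.

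\textbf{Fixed-$s$ clause.} For fixed $s>0$, the spectrum lies in the compact set $E_s=\{\abs{\zeta-1}\ge s,\abs\zeta\le1\}$, which excludes $1$. The min over $Q(1)=1$, $\deg Q\le m$, of $\max_{E_s}\abs{Q}$ decays like $\kappa^{-m}$, with $\kappa>1$ determined by the Green's function of the complement of $E_s$ evaluated at $1$. For normal (here skew-adjoint-resolvent, hence normal) $L$, GMRES/full-window AA is optimal per cycle by Lemma~\ref{lem:perstep}, so each restart cycle contracts the residual by this factor. I would cite \cite{saad1986gmres,eiermann1992hybrid} for this.

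\textbf{Main obstacle.} The delicate step is the intermediate range of the barrier side, namely making the adaptive lower bound uniform across all floors $s\lesssim 1/K$ rather than only at the exact critical floor. My first attempt would be the arc-restricted Christoffel argument: choose nodes as roots of $u^{n}=-e^{i\alpha}$ rotated to avoid the arc near $1$, and recompute Lemma~\ref{lem:identities}(ii) up to constants.
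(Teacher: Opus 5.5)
The argument you finally commit to is the paper's own (implicit) proof. The $sK=O(1)$ bullet is Theorem~\ref{thm:barrier} combined with Lemma~\ref{lem:polyred}, the escape bullet is Theorem~\ref{thm:jackson}, and the fixed-$s$ clause is the cited restarted-GMRES theory. The paper, too, justifies the adaptive barrier only for $s\le\sin(\pi/(2(K+1)))$, so your retreat to the critical floor is exactly what is proved there.

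Your detour for $\sin(\pi/(2(K+1)))<s\le c/K$, however, fails as written.

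\begin{itemize}
\item \textbf{The rescaling step.} Replacing the instance by the extremal instance of degree $K'=\lfloor K/m\rfloor$ (you need $m\asymp c$, not $1/c$) fails outright, not merely ``non-verbatim''. Its spectrum has only $K'+1\le K$ distinct eigenvalues, closed under conjugation. So the real polynomial $Q(\zeta)=\prod_j(\zeta-\zeta_j)/(1-\zeta_j)$ has $Q(1)=1$ and $\deg Q\le K$, and it annihilates $y_0^\star$. Full-memory AA therefore terminates (Theorem~\ref{thm:affine}) and the instance minimax is $0$. Theorem~\ref{thm:exactminimax}(iii) speaks only to degrees $k\le K'$, so there is nothing to invert.
\item \textbf{Rotated grids.} The roots of $u^n=-e^{i\alpha}$ cannot help either, because $\alpha=0$ already maximizes the distance from $u=1$ to the nearest equispaced node. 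Any instance that defeats degree-$K$ adaptive methods must charge at least $K+1$ distinct points of $A_s=\{\abs u=1,\ \abs{u-1}\ge 2s\}$. Once $s>\sin(\pi/(2(K+1)))$, those points cannot be equally spaced on the whole circle.
\item \textbf{The Remez--Bernstein estimate.} Your bound $\min_{P(1)=1}\max_{A_s}\abs{(u-1)P(u)}\ge e^{-O(c)}/(K+1)$ is correct. On its own, though, it defeats only non-adaptive schedules.
\end{itemize}

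The missing idea is the no-gap duality of Remark~\ref{rem:christoffel}, transported to $A_s$.
\begin{itemize}
\item Probability measures on $A_s$ form a weak-$*$ compact convex set.
\item $\int\abs{u-1}^2\abs{P}^2\,d\nu$ is linear and continuous in $\nu$ and convex in $P$.
\item Sion's theorem, together with upper semicontinuity of $\nu\mapsto\lambda_K(\nu)$, then gives a measure $\nu^\star$ on $A_s$ with $\lambda_K(\nu^\star)=\min_{P(1)=1}\max_{A_s}\abs{(u-1)P(u)}^2$.
\end{itemize}
The skew-adjoint instance with spectral measure $\nu^\star$ has floor $\ge s$. Its instance minimax is $\tfrac12\lambda_K(\nu^\star)^{1/2}\ge e^{-O(c)}/(2(K+1))$ against every method in $\Aa_\infty$. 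That is the uniform $\Theta(1/K)$ adaptive barrier for every $s\le c/K$, which neither your proposal nor the paper actually establishes beyond the critical floor.
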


\begin{remark}[Density, not just the floor]\label{rem:density}
The barrier of Theorem~\ref{thm:barrier} uses a spectrum that both has floor
$\asymp1/K$ \emph{and} fills the resolvent circle with $K+1$ distinct
eigenvalues; with fewer than $K+1$ distinct eigenvalues a degree-$K$
polynomial interpolates them all and terminates
(Theorem~\ref{thm:affine}). The precise invariant is the \emph{effective}
number of blocks outside the self-limiting zones
$\dist(\zeta,1)\lesssim1/K$; the extremal measure puts $\to8/\pi^2$ of its
mass on the two nearest blocks but charges all $K+1$
(Remark~\ref{rem:artifact}); see the numerics in
Section~\ref{sec:numerics}.
\end{remark}

\section{Beyond skew-adjoint spectra: normal operators and a nonlinear
family}\label{sec:beyond}

The barrier instances of Sections~\ref{sec:adaptive}--\ref{sec:rotation}
are skew-adjoint. We now extend the theory to two wider classes: normal
operators, where the whole phase transition survives verbatim, and the
genuinely nonlinear family $M=S+N_C$, where the exact minimax survives by
inclusion.

\subsection{Normal operators}

\begin{theorem}[The phase transition for normal operators]\label{thm:normal}
Let $M$ be maximal monotone and \emph{normal}. Then
$\sigma(L)\subseteq\{\zeta:\abs{\zeta-\tfrac12}\le\tfrac12\}$, the
spectral floor $s=\dist(1,\sigma(L))$ is well defined, and:
\begin{enumerate}[label=(\roman*)]
\item the Jackson escape of Theorem~\ref{thm:jackson} holds verbatim:
      $\norm{r(y_K)}\le16\,d_0/((K+1)^2s)$;
\item the barrier of Theorem~\ref{thm:barrier} is attained inside the
      class: the extremal instance is skew-adjoint, hence normal;
\item consequently the phase transition of Corollary~\ref{cor:critical}
      holds for the whole normal class with unchanged constants, and the
      exact minimax over normal instances is $1/(K+1)$.
\end{enumerate}
\end{theorem}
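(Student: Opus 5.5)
The plan is to use the spectral theorem for the normal operator $L=J$ and then repeat the earlier scalar arguments inside a functional calculus. First, $M$ is linear (normal operators are linear), so $L=(I+M)^{-1}$ is a bounded normal operator. It is firmly nonexpansive, which for linear $L$ means $\mathrm{Re}\inner{Lx}{x}\ge\norm{Lx}^2$. For normal $L$ the numerical range has closure equal to the convex hull of the spectrum. Applying the spectral measure $E$, the inequality gives $\mathrm{Re}\,\zeta\ge\abs{\zeta}^2$ on $\sigma(L)$, which is exactly $\abs{\zeta-\tfrac12}\le\tfrac12$. Concretely, take $x$ supported on $E(B)$ for a small Borel set $B$ around $\zeta_0\in\sigma(L)$. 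Then $\mathrm{Re}\inner{Lx}{x}=\int\mathrm{Re}\,\zeta\,d\mu_x$ and $\norm{Lx}^2=\int\abs{\zeta}^2d\mu_x$; shrinking $B$ yields the pointwise inequality. Since $\sigma(L)$ is compact, $s=\dist(1,\sigma(L))$ is a well-defined number in $[0,1]$.

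For (i), take $y_0$ and any $z\in Z=\ker(L-I)$. The residual is $r(y_K)=(L-I)q_J(L)(y_0-z)$, because $(L-I)z=0$. Its norm is $\big(\int\abs{(\zeta-1)q_J(\zeta)}^2\,d\mu_{y_0-z}(\zeta)\big)^{1/2}$. On $\sigma(L)$ we have $\abs\zeta\le1$, so the pointwise bound from the proof of Theorem~\ref{thm:jackson} applies verbatim and gives $\abs{(1-\zeta)q_J(\zeta)}\le 4/(N^2\abs{\zeta-1})\le 4/(N^2 s)$. This yields $\norm{r(y_K)}\le 4\norm{y_0-z}/(N^2s)$; minimizing over $z$ gives $d_0$. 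If $s=0$ the statement is vacuous. Here $y_K=q_J(L)y_0$ is realized in $\Aa_\infty$ as a fixed schedule, since powers of $L$ are generated by successive resolvent calls. The only point needing care is that the spectral measure may charge $\zeta=1$; that part is annihilated by $\zeta-1$ anyway, so subtracting $z$ is merely a convenience.

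For (ii), I will check that the extremal instance $M_K^\star$ of Definition~\ref{def:extremal} is normal. Its blocks are skew-symmetric $2\times2$ matrices, which commute with their adjoints. The odd-$n$ block carrying $N_{\{0\}}$ needs a word: it is the maximal monotone operator with resolvent $0$ on that block, and its resolvent is trivially normal. Theorem~\ref{thm:barrier} then applies unchanged.

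For (iii), combine three ingredients: the universal upper bound $1/(K+1)$ of Theorem~\ref{thm:avref}, which holds for all maximal monotone $M$, in particular normal ones; the lower bound from (ii) via Theorem~\ref{thm:exactminimax}(i); and the escape (i). Together these reproduce Corollary~\ref{cor:critical} on the normal class.

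I expect the main obstacle to be making ``normal'' precise for a possibly multivalued $M$ such as the $N_{\{0\}}$ block. I would handle it by defining normality through $L$, i.e.\ saying $M$ is normal when its resolvent is a normal operator. This also allows the spectrum to meet the circle $\abs{\zeta-\tfrac12}=\tfrac12$ and the point $0$. After that, the spectral inclusion is the only genuinely new computation.
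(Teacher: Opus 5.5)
Your proposal is correct and follows the paper's proof in substance. The spectral theorem reduces $\norm{(L-I)q_J(L)y_0}$ to the scalar Jackson bound, which only needs $\abs{\zeta}\le1$ on $\sigma(L)$, and (ii)--(iii) follow because the extremal instance lies in the normal class while Theorem~\ref{thm:avref} supplies the matching upper bound.

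The one variation is the spectral inclusion. The paper works on the operator side: monotonicity puts $\sigma(M)$ in $\{\Re z\ge0\}$, and then it applies $z\mapsto1/(1+z)$. You instead derive $\Re\zeta\ge\abs{\zeta}^2$ directly from firm nonexpansiveness of the bounded operator $L$, localizing with the spectral measure. Your route has the small advantage of covering $\zeta=0$ and the multivalued $N_{\{0\}}$ block, by defining normality through the resolvent, which the paper's ``skew-adjoint, hence normal'' glosses over.
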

\begin{proof}
$z\mapsto1/(1+z)$ maps $\{\Re z\ge0\}$ onto the closed disk
$\{\abs{\zeta-1/2}\le1/2\}$, and $\sigma(M)$ lies in the closed right
half-plane by monotonicity. Normality gives the spectral theorem with
$\norm{p(L)}=\sup_{\sigma(L)}\abs p$, so the Jackson estimate, which only
uses $\abs\zeta\le1$, applies without the block structure of
Section~\ref{sec:phase}. The rest follows from
Theorems~\ref{thm:jackson} and~\ref{thm:barrier}.
\end{proof}

\begin{remark}[Non-normal linear operators]\label{rem:nonnormal}
For non-normal $L$ the polynomial reduction of Lemma~\ref{lem:polyred} is
algebraic and survives, and the barrier instances are normal, so all lower
bounds apply a fortiori to the non-normal class. The escape side degrades:
$\norm{p(L)}$ can exceed $\sup_{\sigma(L)}\abs p$ by a Kreiss-type factor,
so the Jackson constant $16$ must be multiplied by the Kreiss constant of
$L$; sharp non-normal constants are open.
\end{remark}

\subsection{The nonlinear family $M=S+N_C$}

\begin{proposition}[Subspace constraints stay linear]\label{prop:subspace}
Let $S$ be bounded skew-adjoint and $C=V$ a closed subspace. Then
$S_V=P_VSP_V$ is skew-adjoint on $V$ and
\[
J_{S+N_V}(y)=(I+S_V)^{-1}P_Vy\qquad\text{for all }y:
\]
the constrained resolvent is the compressed skew-adjoint resolvent. In
particular the polynomial reduction and every bound of
Sections~\ref{sec:uniform}--\ref{sec:phase} apply to the family
$\{S+N_V\}$.
\end{proposition}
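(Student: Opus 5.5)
To prove the proposition, I will compute the resolvent of $M=S+N_V$ directly from its defining inclusion and then check that it agrees with the claimed formula. For $y\in\Hh$, $x=J_{S+N_V}(y)$ is characterized by $x\in V$ and $y-x-Sx\in N_V(x)=V^\perp$; the normal cone of a closed subspace at any of its points is $V^\perp$. Maximal monotonicity of $S+N_V$ holds by the standard sum rule, since $S$ is bounded, monotone and everywhere defined. It gives existence and uniqueness of $x$, so it suffices to exhibit one solution.

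Step one is skew-adjointness of $S_V$ on $V$. For $u,v\in V$ we have $\inner{P_VSP_Vu}{v}=\inner{Su}{v}=-\inner{u}{Sv}=-\inner{u}{P_VSP_Vv}$, because $P_V$ is self-adjoint and acts as the identity on $V$. Hence $S_V:V\to V$ is bounded and skew-adjoint, so $I+S_V$ is invertible on $V$. The invertibility follows from $\inner{(I+S_V)u}{u}=\norm{u}^2$, which gives coercivity, together with Lax--Milgram or the fact that the adjoint $I-S_V$ is also coercive.

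Step two is verification. Set $x=(I+S_V)^{-1}P_Vy\in V$ and apply $P_V$ to the inclusion residual. Since $x\in V$, we get $P_V(y-x-Sx)=P_Vy-x-P_VSP_Vx=P_Vy-(I+S_V)x=0$. Therefore $y-x-Sx\in V^\perp=N_V(x)$, and by uniqueness $x=J_{S+N_V}(y)$. This is valid for every $y$, including components in $V^\perp$, which are simply annihilated.

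For the final claim, I will note that $J_{S+N_V}=(I+S_V)^{-1}P_V$ is linear. With respect to $\Hh=V\oplus V^\perp$, it equals the resolvent of the skew-adjoint operator $S_V$ on $V$ and is zero on $V^\perp$. This is exactly the convention of Definition~\ref{def:extremal}, where a block carrying $N_{\{0\}}$ has vanishing resolvent. Lemma~\ref{lem:polyred} requires only linearity of the resolvent, so it applies. The spectral computations used afterward require $L$ normal with spectrum in $\{\abs{\zeta-\tfrac12}\le\tfrac12\}$. Here $L$ is the orthogonal direct sum of a normal operator and $0$, and $0$ lies in that disk. Hence Theorems~\ref{thm:jackson}, \ref{thm:normal} and the lower bounds apply; $\zer M=\ker S_V$ is closed, so $d_0$ is computed as before.

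The only mildly delicate point is whether the sections' bounds, stated for skew-adjoint or maximal monotone linear $M$, literally cover the degenerate block. I would handle it by treating $M$ as the multivalued linear relation $S_V\oplus N_{\{0\}}$ on $V\oplus V^\perp$ and noting that every argument used only $L$ and its spectral decomposition.
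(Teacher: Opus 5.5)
Your proposal is correct and follows essentially the paper's argument: characterize $x=J_{S+N_V}(y)$ by $x\in V$ and $y-x-Sx\in V^\perp$, then project onto $V$ to get $P_Vy=(I+S_V)x$. The paper writes this as an if-and-only-if chain, so invertibility of $I+S_V$ on $V$ gives existence and uniqueness at once; you justify that invertibility explicitly, whereas the paper leaves it implicit, and it makes your appeal to maximal monotonicity of the sum harmless but unnecessary.
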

\begin{proof}
$x=J_{S+N_V}(y)$ iff $x\in V$ and $y-x-Sx\in V^\perp$, iff
$P_Vy=P_Vx+P_VSx=(I+S_V)x$.
\end{proof}

\begin{theorem}[Exact minimax over the nonlinear
family]\label{thm:nonlinminimax}
Let $\mathfrak F$ be the family of operators $M=S+N_C$ with $S$ bounded
skew-adjoint and $C$ nonempty closed convex. For every class of methods
containing the averaged reflection and contained in $\Aa_\infty$, the
minimax value satisfies
\[
\inf_{\text{methods}}\;\sup_{M\in\mathfrak F,\;d_0=1}\norm{r(y_K)}
\;=\;\frac{1}{K+1}.
\]
\end{theorem}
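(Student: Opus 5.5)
The proof is a sandwich: an upper bound valid for every maximal monotone operator, and a lower bound from a single member of $\mathfrak F$ that is linear.

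\textbf{Upper bound.} Every $M=S+N_C$ with $S$ bounded skew-adjoint and $C$ nonempty closed convex is maximal monotone. Indeed, $S$ is bounded, linear and monotone, hence maximal monotone with full domain. $N_C$ is maximal monotone. Since $\dom S=\Hh$, the Rockafellar sum theorem applies and $S+N_C$ is maximal monotone. Theorem~\ref{thm:avref} therefore applies to each member of $\mathfrak F$. It gives $\norm{r(\hat y_K)}\le d_0/(K+1)$ after $K$ resolvent evaluations, with $d_0=\dist(y_0,\zer(S+N_C))$. The averaged reflection lies in the class of methods by hypothesis; by the remark after Definition~\ref{def:class}, its queries are affine in the history. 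So the infimum over methods of the supremum over $\mathfrak F$ with $d_0=1$ is at most $1/(K+1)$.

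\textbf{Lower bound.} I would take $C=\Hh$ in the extremal instance. Then $N_C\equiv\{0\}$ and $M=S$ with $S=M_K^\star$ of Definition~\ref{def:extremal}. This $S$ is a bounded skew-adjoint operator on $\R^{2n}$, so it lies in $\mathfrak F$.

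One wrinkle needs attention. When $n$ is odd, the block with $\phi_j=\pi$ carries $N_{\{0\}}$ rather than a bounded skew block. There are two ways to handle it.
\begin{itemize}
\item Take $C=\{0\}\oplus\R^{2n-2}$ for that coordinate pair, which is closed convex, even a subspace. Then $M=S+N_C$ with $S$ the bounded skew part on the remaining blocks (zero on the removed pair). By Proposition~\ref{prop:subspace}, the resolvent is $(I+S_V)^{-1}P_V$. This is exactly the linear $L$ of Definition~\ref{def:extremal}, with eigenvalue $0$ on that block.
\item Alternatively, note that the statement only needs $\mathfrak F$ to contain this instance, and the subspace formulation already does that.
\end{itemize}
In either case the resolvent is linear, and Lemma~\ref{lem:polyred} applies to every method in $\Aa_\infty$. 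Lemma~\ref{lem:csc2} gives $\zer M=\{0\}$ and $d_0=\norm{y_0^\star}=1$.

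Theorem~\ref{thm:exactminimax}(i) then gives $\norm{r(y_K)}\ge1/(K+1)$ for every method in $\Aa_\infty$, hence for every method in the class under consideration. Since a single fixed instance defeats all methods, the supremum over $\mathfrak F$ of $\norm{r(y_K)}$ is at least $1/(K+1)$ for every method. Combined with the upper bound, this gives equality.

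\textbf{Main obstacle.} The delicate step is the odd-$n$ block with eigenvalue $0$. I must check that Proposition~\ref{prop:subspace} reproduces the resolvent of Definition~\ref{def:extremal} exactly, and that $\zer$ is still $\{0\}$. The check is: $x\in V$ forces $x=0$ on the removed pair, and on the other blocks $\tan(\phi_j/2)$ is finite and nonzero. Together these give a unique zero at the origin.

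A secondary point is confirming that nothing in Theorem~\ref{thm:exactminimax} relied on $L$ having full rank. Its proof uses only the eigenvalues $\zeta_j=(1+\omega_j)/2$. For $\omega_j=-1$ this gives $\zeta_j=0$, which is consistent with that block.
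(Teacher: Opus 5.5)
Your proof is correct and is the same sandwich the paper uses: the averaged reflection (Theorem~\ref{thm:avref}) gives the upper bound on all of $\mathfrak F$, and the extremal instance with Theorem~\ref{thm:exactminimax}(i) gives the lower bound. Your handling of the odd-$n$ frozen block, writing the instance as $S+N_V$ with $V=\{0\}\oplus\R^{2n-2}$, is more careful than the paper's parenthetical ``take $C=\Hh$''. That parenthetical literally covers only even $n$, because for odd $n$ the $N_{\{0\}}$ block is not a bounded skew-adjoint operator.
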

\begin{proof}
Lower: the extremal instance of Definition~\ref{def:extremal} belongs to
$\mathfrak F$ (take $C=\Hh$); apply Theorem~\ref{thm:exactminimax}(i).
Upper: the averaged reflection attains $d_0/(K+1)$ for \emph{every} maximal
monotone $M$ (Theorem~\ref{thm:avref}), in particular every member of
$\mathfrak F$.
\end{proof}

\begin{remark}[What genuine nonlinearity changes]\label{rem:genuine}
For non-subspace $C$ the resolvent is piecewise affine and the reduction
$y_k=Q_k(L)y_0$ fails: the residual of a combination is not the combination
of residuals. Theorem~\ref{thm:nonlinminimax} survives because the worst
case over $\mathfrak F$ is linear; on individual nonlinear instances
adaptive methods can do strictly better than $1/(K+1)$
(Experiment~5).
\end{remark}

\begin{remark}[Perturbation stability of the barrier]\label{rem:perturb}
Let $M=M_0+E$ with $M_0$ the extremal instance and $E$ $\epsilon$-Lipschitz
with $\norm{E(0)}\le\epsilon$. The resolvent identity
$J_{M_0+E}(y)=J_{M_0}(y-E(J_{M_0+E}y))$ shows each oracle answer deviates by
at most $\epsilon(1+\norm{J_{M_0+E}y})$. For a fixed non-adaptive schedule
the accumulated deviation of $y_K$ is $O(\epsilon K\,\Gamma_K)$, where
$\Gamma_K$ is the $\ell^1$ norm of the schedule's coefficients
($\Gamma_K=O(1)$ for Fej\'er): the $1/(K+1)$ barrier therefore persists
for $\epsilon=o(K^{-2})$. We do not pursue the optimal tolerance.
\end{remark}

\section{Optimal safeguarding}\label{sec:safeguard}

By Corollary~\ref{cor:linsafe} safeguarding is vacuous on linear problems.
This section settles the nonlinear case: we give a certified scheme with
exactly two oracle evaluations per iteration (Theorem~\ref{thm:cert}) and
prove that no cheaper certificate exists
(Proposition~\ref{prop:nohist}--Corollary~\ref{cor:optimal2}).

\subsection{The certified scheme}

\begin{quote}
\textbf{Algorithm 1 (Certified AA-PPM).} Input: $y_0$, memory $m$,
tolerance $\delta\in[0,1)$. Maintain a \emph{certified chain} $(c_k,r^c_k)$
with $c_0=y_0$, an \emph{accelerated chain} $(a_k,r^a_k)$ with its AA
window, and the best-so-far output $b_k$; $r^c_0=r(c_0)$ (one evaluation).
Iteration $k$ costs exactly two resolvent evaluations:
\begin{enumerate}[label=(\arabic*)]
\item \emph{Certified step}: $c_{k+1}=Jc_k$, $r^c_{k+1}=r(c_{k+1})$
      [evaluation 1];
\item \emph{Probe}: $\hat y=\sum_i\gamma_i x^a_{k-\mu+i}$ from the AA
      window of the accelerated chain, $\hat r=r(\hat y)$
      [evaluation 2];
\item \emph{Accept/reject}: if $\norm{\hat r}\le(1-\delta)\norm{r^a_k}$ set
      $a_{k+1}=\hat y$ and append to the window; else reset
      $a_{k+1}=c_{k+1}$ with the certified window;
\item \emph{Output}: $b_{k+1}=\arg\min\{\norm{r(b_k)},\norm{r^a_{k+1}},
      \norm{r^c_{k+1}}\}$.
\end{enumerate}
\end{quote}

\begin{theorem}[Certificate and rate inheritance]\label{thm:cert}
Algorithm~1 uses exactly $2K+1$ resolvent evaluations in $K$ iterations, and
\begin{enumerate}[label=(\roman*)]
\item \emph{PPM floor}: $\norm{r(b_K)}\le\norm{r^c_K}\le
      d_0/\sqrt{K+1}$ for all $K$; i.e.\ the certified output is never
      worse than PPM run at half the oracle budget, and inherits every
      rate PPM has under additional structure (strong monotonicity,
      growth conditions, etc.);
\item \emph{Acceleration side}: along every acceptance run
      $[k_0,k]$, $\norm{r(b_k)}\le(1-\delta)^{k-k_0}\norm{r^c_{k_0}}$;
      on problems where raw AA converges (super)linearly---e.g.\ the
      floored-spectrum regime of Theorem~\ref{thm:jackson}---the output
      inherits the accelerated rate with the floor of (i) intact;
\item \emph{Monotonicity}: $\norm{r(b_{k+1})}\le\norm{r(b_k)}$.
\end{enumerate}
\end{theorem}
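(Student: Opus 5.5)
The plan is to separate the proof into an oracle-counting part and three residual inequalities, each read off from the structure of Algorithm~1 together with one external fact about plain PPM. Throughout, an evaluation of $r(y)=J(y)-y$ costs exactly one resolvent call. For the \emph{count}, the initialization $r^c_0=r(c_0)$ costs one evaluation, $J(c_0)$, which also produces $c_1=Jc_0$. Inductively, at iteration $k$ the point $c_{k+1}=Jc_k$ is already known from the previous residual evaluation, so step~(1) only needs $J(c_{k+1})$, which is evaluation~1.

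For step~(2), the candidate $\hat y$ is an affine combination of stored resolvent outputs $x^a_i$ and costs nothing. Computing $\hat r$ needs $J(\hat y)$, which is evaluation~2, and this output is exactly the new window entry $x^a_{k+1}$ if $\hat y$ is accepted. On rejection the reset uses $c_{k+1}$ and $J(c_{k+1})$, both already computed. Steps~(3)--(4) compare norms of stored vectors, and $r(b_k)$ is always one of the stored residuals. This gives $1+2K$ evaluations in $K$ iterations.

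For (i) and (iii), the output rule in step~(4) takes a minimum that includes both $\norm{r(b_k)}$ and $\norm{r^c_{k+1}}$. Hence $\norm{r(b_{k+1})}\le\norm{r(b_k)}$, which is (iii), and $\norm{r(b_K)}\le\norm{r^c_K}$. The certified chain is literally the PPM orbit $c_K=J^Ky_0$, untouched by the accelerated chain. I will therefore invoke the non-asymptotic tight PPM estimate of Gu--Yang \cite{guyang2020tight}, $\norm{r(J^Ky_0)}\le d_0/\sqrt{K+1}$. Note that \eqref{eq:guyang} as displayed carries a $(1+o(1))$ factor; I will cite the exact nonasymptotic form rather than \eqref{eq:guyang} itself. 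The same observation gives inheritance: any rate PPM enjoys under extra structure transfers to $b_K$, because $c_K$ is the PPM iterate.

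For (ii), I will define an acceptance run $[k_0,k]$ as a maximal stretch starting either at $k_0=0$ (where $a_0=y_0=c_0$) or immediately after a reset. In both cases $a_{k_0}=c_{k_0}$, so $\norm{r^a_{k_0}}=\norm{r^c_{k_0}}$. Each acceptance in the run enforces $\norm{r^a_{i+1}}\le(1-\delta)\norm{r^a_i}$, and induction gives $\norm{r^a_k}\le(1-\delta)^{k-k_0}\norm{r^c_{k_0}}$. Combining this with $\norm{r(b_k)}\le\norm{r^a_k}$ from step~(4) proves (ii). The ``inherits the accelerated rate'' clause then says only that, whenever raw AA's residual decreases at the stated rate, the acceptance test does not fire and the bound above applies, while (i) still holds.

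The main obstacle is bookkeeping rather than analysis. The delicate points are the indexing conventions: that a run begins at a point where the accelerated and certified chains coincide, and that no hidden evaluation is needed on reset. The one genuinely external ingredient is the exact non-asymptotic PPM constant, so I will make sure to cite that form explicitly instead of the asymptotic display.
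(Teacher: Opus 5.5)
Your proposal is correct and follows essentially the paper's argument: (iii) and the first inequality of (i) come from the output rule in step~(4), the certified chain is the PPM orbit, and (ii) follows by inducting the acceptance test from a run start where $a_{k_0}=c_{k_0}$. The paper does not prove the count $2K+1$ at all, so your oracle bookkeeping fills that in. The paper also does not cite Gu--Yang for the PPM bound. Instead it derives it in two lines from firm nonexpansiveness: $d(c_{k+1})^2\le d(c_k)^2-\norm{r^c_k}^2$ together with $\norm{r^c_{k+1}}\le\norm{r^c_k}$ gives $(K+1)\norm{r^c_K}^2\le\sum_{k=0}^K\norm{r^c_k}^2\le d_0^2$, which makes the $(1+o(1))$ issue you flagged irrelevant.
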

\begin{proof}
(i) The certified chain is exactly PPM from $y_0$; firm nonexpansiveness
gives $d(c_{k+1})^2\le d(c_k)^2-\norm{r^c_k}^2$ and
$\norm{r^c_{k+1}}\le\norm{r^c_k}$, hence
$(K+1)\norm{r^c_K}^2\le\sum_{k=0}^K\norm{r^c_k}^2\le d_0^2$. The output
dominates the certified chain by step (4). (ii) By step (3) each accepted
probe contracts the accelerated residual by $(1-\delta)$ relative to the
previous accelerated iterate, and a run starts at a certified point.
(iii) Immediate from step (4).
\end{proof}

\subsection{History-only certification is impossible}

\begin{proposition}\label{prop:nohist}
Let a finite history $\{(y_i,x_i,r_i)\}_{i=0}^k$ be generated by a maximal
monotone $M$, and let $\hat y$ be any affine candidate with
$\hat y\notin C=\conv\{x_0,\dots,x_k\}$. Then
$M'=M+N_C$ is maximal monotone, generates the \emph{same} history
(with $J'(y_i)=x_i$, $r'(y_i)=r_i$ for all $i$), and
\[
\norm{r'(\hat y)}\ \ge\ \dist(\hat y,C)>0 .
\]
Hence no function of the history alone can certify any bound on
$\norm{r(\hat y)}$ below $\dist(\hat y,C)$: two operators indistinguishable
from the history assign arbitrarily different residuals to the same
candidate.
\end{proposition}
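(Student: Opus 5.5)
The plan is to verify three things in order: that $M'=M+N_C$ is maximal monotone, that it reproduces the history, and that it forces the residual bound. The last two are one-line consequences of the resolvent characterization. The first is where care is needed.

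\emph{Same history.} Since $x_i=J(y_i)$, we have $y_i-x_i\in M(x_i)$. Each $x_i$ lies in $C=\conv\{x_0,\dots,x_k\}$, so $0\in N_C(x_i)$ and hence $y_i-x_i\in M(x_i)+N_C(x_i)=M'(x_i)$. Because $M'$ is monotone, its resolvent $J'=(I+M')^{-1}$ is single-valued wherever it is defined. Therefore $J'(y_i)=x_i$ and $r'(y_i)=x_i-y_i=r_i$ for every $i$, and the two operators produce identical windows $\{(y_i,x_i,r_i)\}$.

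\emph{Residual floor.} For any $y$, the point $J'(y)$ lies in $\dom M'\subseteq\dom N_C=C$. Hence
\[
\norm{r'(\hat y)}=\norm{J'(\hat y)-\hat y}\ \ge\ \dist(\hat y,C),
\]
and this is positive because $C$ is compact and convex and $\hat y\notin C$. The ``arbitrarily different'' clause then follows by comparing with $M$ itself. For $M$, the residual $r(\hat y)$ can be small; for instance, when $M$ is linear, Lemma~\ref{lem:perstep} gives $r(\hat y)=L\sum_i\gamma_ir_i$. The history-computable surrogate is the same for $M$ and $M'$, yet the true residual under $M'$ is bounded below by $\dist(\hat y,C)$.

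\emph{Maximal monotonicity (main obstacle).} Monotonicity of $M+N_C$ is immediate. Maximality needs a sum rule, and Rockafellar's theorem requires $\dom M\cap\operatorname{int}C\neq\emptyset$ (or a relative-interior condition in finite dimensions). The polytope $C$ may have empty interior in $\Hh$, and it is not clear that $\operatorname{ri}(\dom M)\cap\operatorname{ri}C\ne\emptyset$. I would first check whether the qualification holds directly. For example, if all $x_i$ lie in $\operatorname{ri}\conv(\dom M)$, the finite-dimensional condition $\operatorname{ri}(\dom M)\cap\operatorname{ri}C\neq\emptyset$ is satisfied. Failing that, I would use a fattening argument. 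Set $C_\varepsilon=C+\varepsilon\bar B$ with $0<\varepsilon<\dist(\hat y,C)$. Then $x_0\in\dom M\cap\operatorname{int}C_\varepsilon$, so $M+N_{C_\varepsilon}$ is maximal monotone by Rockafellar's sum theorem. The two steps above go through verbatim with $C_\varepsilon$ in place of $C$ and give $\norm{r'(\hat y)}\ge\dist(\hat y,C)-\varepsilon$. Letting $\varepsilon\downarrow0$ shows that no history-based certificate can assert any bound below $\dist(\hat y,C)$, which is the operative conclusion. If $\dom M=\Hh$ (e.g.\ $M$ continuous and monotone, as in the linear case), the qualification is automatic and the statement holds exactly as written.
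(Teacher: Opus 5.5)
Your proposal is correct. Its two substantive checks are exactly the paper's argument: the history is reproduced because $0\in N_C(x_i)$ and $J'$ is single-valued, and the floor follows from $J'(\hat y)\in\dom M'\subseteq C$.

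You differ from the paper at the maximality step. The paper asserts that $M+N_C$ is maximal monotone ``as a sum with $\dom N_C=\overline C$'', citing Bauschke--Combettes, Cor.~25.5. But $\dom N_C=\overline C$ is not by itself a constraint qualification, and your suspicion is justified: the claim can genuinely fail. Here is an example in $\R^2$.
\begin{itemize}
\item Take $f(x)=\max\{-\sqrt{x_2},\,\abs{x_1}-1\}$ on $\{x_2\ge0\}$ ($+\infty$ elsewhere), $M=\partial f$, $y_0=(-3,0)$, $y_1=(3,0)$.
\item Then $x_0=(-2,0)$ and $x_1=(2,0)$, since $(-1,0)\in\partial f(x_0)$ and $(1,0)\in\partial f(x_1)$. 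So $C=[-2,2]\times\{0\}$.
\item $\partial f(0,0)=\emptyset$ because of the infinite slope in $x_2$, so $(0,0)\notin\dom(M+N_C)$.
\item On the other hand $0\in\partial(f+\iota_C)(0,0)$.
\end{itemize}
Thus the graph of $M+N_C$ is strictly contained in that of the monotone operator $\partial(f+\iota_C)$. Hence $M+N_C$ is not maximal, and indeed $J'(0)$ is undefined.

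Your fattening argument repairs this cleanly. Rockafellar's sum theorem applies to $M+N_{C_\varepsilon}$ because $x_0\in\dom M\cap\operatorname{int}C_\varepsilon$. The history is preserved, and $\norm{r'(\hat y)}\ge\dist(\hat y,C)-\varepsilon$ for every $\varepsilon\in(0,\dist(\hat y,C))$. That is all the ``no history-only certificate'' conclusion needs.

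The exact bound $\dist(\hat y,C)$ is recovered in three cases:
\begin{itemize}
\item when $\dom M=\Hh$, which covers the linear setting of the paper;
\item under a relative-interior condition;
\item for $M=\partial f$, by using $\partial(f+\iota_C)$ in place of $M+N_C$, which is maximal, reproduces the history, and has domain in $C$.
\end{itemize}
So the paper's version gives the sharp floor with $C$ itself, but only when a qualification happens to hold. Yours is valid for every maximal monotone $M$, at the cost of an arbitrarily small $\varepsilon$.

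One small point: ``arbitrarily different'' still needs the paper's remark that $\dist(\hat y,C)$ is unbounded over extrapolants $\hat y$. Comparing with $M$ alone only exhibits a discrepancy when $\norm{r(\hat y)}<\dist(\hat y,C)$.
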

\begin{proof}
$M'=M+N_C$ is maximal monotone as a sum with $\dom N_C=\overline{C}$
(Bauschke--Combettes \cite[Cor.~25.5]{bauschke2017convex}). For each $i$,
$y_i-x_i\in Mx_i\subseteq Mx_i+N_C(x_i)$ since $x_i\in C$ implies
$0\in N_C(x_i)$; single-valuedness of the resolvent gives $J'(y_i)=x_i$.
Every resolvent of $M'$ maps into $\dom M'\subseteq C$, so
$J'(\hat y)\in C$ and
$\norm{r'(\hat y)}=\norm{J'(\hat y)-\hat y}\ge\dist(\hat y,C)$. Since
$\dist(\hat y,C)$ is unbounded over the choice of aggressive extrapolants
$\hat y$ (and $\hat y$ is not determined by $M$), no history-only bound
exists.
\end{proof}

\begin{corollary}[The factor two is optimal]\label{cor:optimal2}
Any scheme that certifies a per-iteration residual bound must, at every
iteration, evaluate the residual of its probe (Proposition~\ref{prop:nohist}
rules out window-based certification) and advance a certified chain at an
evaluated point (one further evaluation). Two evaluations per iteration are
therefore necessary, and Algorithm~1 attains exactly two: the oracle
overhead of certification is a factor of two, and it is optimal. On linear
problems the overhead vanishes (Corollary~\ref{cor:linsafe}).
\end{corollary}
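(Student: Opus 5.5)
The claim of Corollary~\ref{cor:optimal2} has two halves, and I would prove them separately: a necessity half (no certifying scheme can spend fewer than two evaluations per iteration) and a sufficiency half (Algorithm~1 spends exactly two). Sufficiency is bookkeeping. Theorem~\ref{thm:cert} already counts $2K+1$ evaluations over $K$ iterations. Steps (1) and (2) of the algorithm each call $J$ once. Step (4) needs no further evaluation, since $r^c_{k+1}$ and $\hat r=r^a_{k+1}$ are both already known (on rejection, $r^a_{k+1}=r^c_{k+1}$). The only real content is therefore the lower bound. I would first make precise what ``certifies a per-iteration residual bound'' means: at iteration $k$ the scheme outputs a point $b_{k+1}$ together with a number $\beta_{k+1}$, computed from observed data, such that $\norm{r(b_{k+1})}\le\beta_{k+1}$ for \emph{every} maximal monotone $M$ consistent with the observations. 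In addition, $\beta_{k+1}$ must meet a nontrivial rate requirement, concretely at least the PPM floor $\beta_{k+1}\le d_0/\sqrt{k+2}$ whenever the probe fails.

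The first lower-bound step uses Proposition~\ref{prop:nohist}. Suppose the scheme makes a probe $\hat y$ and wants credit for it without evaluating $J(\hat y)$. If $\hat y$ lies outside $C=\conv\{x_0,\dots,x_k\}$, the operator $M'=M+N_C$ reproduces the whole history but has $\norm{r'(\hat y)}\ge\dist(\hat y,C)$, and this quantity can be made as large as we like. So any finite $\beta$ attached to $\hat y$ is violated by some consistent operator. This forces one evaluation at the probe, unless the probe stays inside $C$. I would dispose of that case by noting that acceleration requires extrapolation: AA coefficients may be negative, so a scheme restricted to convex combinations is no longer a probe in the sense of Algorithm~1. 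I would state this restriction explicitly as part of the class being considered.

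The second lower-bound step, which I expect to be the main obstacle, is showing that one more evaluation is needed to keep a fallback certificate. The idea is that when the probe is rejected, the bound $\beta_{k+1}\le d_0/\sqrt{k+2}$ must still hold. Residuals known from the history certify only points already evaluated, and along a single chain the bound improves only if a new PPM-type point is evaluated. A scheme that spent its one evaluation on the probe would, whenever the probe residual is large, be left with only the old bound $\norm{r(b_k)}$, which misses the $\sqrt{k+2}$ improvement. To make this rigorous I would use an adversary. It answers every query consistently with the extremal rotation instance of Definition~\ref{def:extremal} on evaluated points, and it uses the $N_C$ modification of Proposition~\ref{prop:nohist} to make every probe reject. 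Under this adversary, a one-evaluation scheme certifies nothing better than its previous iterate, so it stalls. The delicate part is keeping the adversarial operator a single maximal monotone operator consistent across all iterations. I would build it incrementally as $M+N_{C_k}$ with nested hulls $C_k$, relying on Bauschke--Combettes sum maximality as in the proof of Proposition~\ref{prop:nohist}.

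Putting the two steps together gives at least two evaluations per iteration, which matches Algorithm~1. The closing remark, that on linear problems the overhead disappears, follows from Corollary~\ref{cor:linsafe}: by Lemma~\ref{lem:perstep} the true residual equals $L$ applied to the surrogate, so it is certified from the window alone with no probe evaluation.
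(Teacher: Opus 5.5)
Your decomposition is the paper's: Proposition~\ref{prop:nohist} for the probe, one further evaluation for the certified chain, Algorithm~1 for attainment, and Corollary~\ref{cor:linsafe} for the linear case. Your sufficiency bookkeeping is correct. The paper supports the necessity half only by that one-line appeal and an unproved assertion about the chain, and your attempt to prove it does not go through.

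In Step~1 you over-read Proposition~\ref{prop:nohist}. For a given history and probe, $\dist(\hat y,C)$ is a fixed number determined by the scheme's choice of $\hat y$. The adversary cannot inflate it: any consistent $N_{C'}$ needs $C'\supseteq C$. So the proposition defeats only certificates $\beta<\dist(\hat y,C)$, not ``any finite $\beta$''. In fact a finite history-only certificate always exists. $I-J$ is firmly nonexpansive, so $r$ is nonexpansive, and every consistent operator satisfies $\norm{r(\hat y)}\le\min_i(\norm{r_i}+\norm{\hat y-y_i})$. What survives is only that the history cannot certify a value below $\dist(\hat y,C)$. That is vacuous for probes just outside $C$, and restricting the class to ``extrapolating'' probes does not exclude them.

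Step~2 fails as described, for three reasons.
(a) With growing hulls $C_k\subseteq C_{k+1}$ the normal cones shrink, $N_{C_{k+1}}(x)\subseteq N_{C_k}(x)$. An earlier answer of $M+N_{C_k}$ that used a nonzero normal component (the only way the device changes a probe residual) therefore need not be an answer of $M+N_{C_{k+1}}$. The consistency you call delicate is exactly what breaks.
(b) The device forces rejection only when $\dist(\hat y,C)$ exceeds the acceptance threshold, and the scheme, not the adversary, controls that distance.
(c) The extremal instance cannot make stalling costly. By Lemma~\ref{lem:identities}(iii), $\norm{r(y_0^\star)}^2=\sum_j w_j\abs{\zeta_j-1}^2=1/(K+1)$. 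So the initial residual alone already meets $\beta_{k+1}\le d_0/\sqrt{k+2}$ for every $k\le K-1$.

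More fundamentally, under your definition the necessity claim is false. Halpern's iteration $y_{k+1}=\frac1{k+2}y_0+\frac{k+1}{k+2}(2J-I)y_k$ lies in $\Aa_\infty$, queries affine rather than convex combinations, and spends one evaluation per iteration. Yet its evaluated residuals satisfy $\norm{r(y_k)}\le d_0/(k+1)$ for every maximal monotone $M$, which meets your PPM-floor requirement. The count ``two'' has content only for probes with no rate guarantee (arbitrary AA coefficients) combined with a Fej\'er-chain certificate as in Theorem~\ref{thm:cert}(i). The missing lemma is that a single evaluation cannot both test such a probe and advance that certificate; neither your sketch nor the paper's argument proves it.

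Finally, in the linear case Lemma~\ref{lem:perstep} gives only the computable bound $\norm{r(\hat y)}\le\norm{\sum_i\gamma_i r_{k-\mu+i}}\le\norm{r_k}$. That is monotonicity, not the PPM-floor certificate your own definition demands.
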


\begin{remark}[Descent tests]\label{rem:zob}
The popular descent safeguard \cite{zhang2020globally} (accept iff
$\norm{\hat r}\le\theta\norm{r_k}$) evaluates the probe, so it satisfies
the necessary condition of Corollary~\ref{cor:optimal2}, but without the
certified chain it maintains no rate certificate. In all our experiments on
monotone problems---over $4000$ random linear and $400$ random
projected-nonlinear instances---the descent test never rejected a single
probe, consistent with Corollary~\ref{cor:linsafe} and with the empirical
robustness of AA on monotone operators; the point of
Proposition~\ref{prop:nohist} is that this robustness is an empirical
property, not a certificate.
\end{remark}

\section{Structured problems: corrected positive results}\label{sec:structured}

\subsection{Affine operators: finite termination}

\begin{theorem}\label{thm:affine}
Let $M$ be affine maximal monotone on $\R^d$ and let $\nu$ be the number of
distinct eigenvalues of $L=J$ different from $1$ ($\nu\le d$). Then
full-memory AA-PPM ($m=\infty$) produces $r(y_k)=0$ for some $k\le\nu+1$.
\end{theorem}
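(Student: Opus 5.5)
The plan is to reduce full-memory AA-PPM on an affine problem to GMRES-type polynomial minimization, and then to exhibit an admissible polynomial that annihilates the residual. Write $M(x)=Ax+b$, so that $J(y)=L y+c$ with $L=(I+A)^{-1}$. Fix a zero $z\in Z$ (we assume $Z\neq\emptyset$; otherwise there is nothing to terminate to). Then $J(y)-z=L(y-z)$, and in the shifted variable $e=y-z$ the problem is linear. Affine combinations with coefficients summing to one commute with the shift. Hence Lemma~\ref{lem:polyred} applies verbatim to $e_k=y_k-z$, giving $e_k=Q_k(L)e_0$ with $Q_k(1)=1$ and $\deg Q_k\le k$. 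The residual is $r(y_k)=(L-I)Q_k(L)e_0$.

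Next I would show that full-memory AA is globally optimal over this polynomial class at each step. By Lemma~\ref{lem:perstep}, $r(y_{k+1})=L\sum_i\gamma_i r_i$, and AA minimizes $\norm{\sum_i\gamma_i r_i}$ over all affine combinations of $r_0,\dots,r_k$. Since $r_i=(L-I)Q_i(L)e_0$, and by induction the $Q_i$ span, affinely, the degree-$\le k$ polynomials with value $1$ at $1$ that are reachable, the surrogate minimum equals $\min\{\norm{(L-I)P(L)e_0}:P(1)=1,\deg P\le k\}$. This is the GMRES equivalence cited in the paper \cite{walker2011anderson}. I would handle the degenerate case (AA stagnation, where the $Q_i$ fail to span) by noting that stagnation already means the surrogate minimum is $0$, i.e.\ termination. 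So it suffices to produce, for $k=\nu$, a polynomial $P$ with $P(1)=1$, $\deg P\le\nu$ and $(L-I)P(L)e_0=0$. The surrogate then vanishes, and $r(y_{\nu+1})=L\cdot0=0$, which gives termination by step $\nu+1$.

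The construction step is where I expect the main obstacle: non-normal $L$ with nontrivial Jordan blocks. Let $\lambda_1,\dots,\lambda_\nu$ be the distinct eigenvalues different from $1$. I would first try $P(\zeta)=\prod_j(\zeta-\lambda_j)/(1-\lambda_j)$, which is admissible since $\lambda_j\neq1$. Then $(L-I)P(L)$ kills every eigenvector, but it kills generalized eigenvectors only if $L$ is diagonalizable on the relevant part. I would argue diagonalizability as follows. The eigenvalue $1$ of $L$ corresponds to $\ker A$, and a monotone $A$ has no nontrivial Jordan chain at $0$: if $Av=w\in\ker A$, then $\inner{A(v+tw)}{v+tw}=\inner{w}{v}+t\norm{w}^2$, so $t\norm w^2\ge-\inner wv$ for all $t$, which forces $w=0$ (take $t\to-\infty$). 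Other eigenvalues may have Jordan blocks. For these, the honest fix is to use the minimal polynomial of $L$ restricted to the cyclic subspace of $e_0$, whose degree is at most the number of nonunit roots counted with index. The statement as written, with $\nu$ counting distinct eigenvalues, therefore requires diagonalizability of the non-unit part, e.g.\ normal $M$. In the proof I would either assume this or replace $\nu$ by the degree of the minimal polynomial, and would flag that this is a point to check against the intended statement. Finally, conjugate pairs keep $P$ real, as in Theorem~\ref{thm:mainlb}(ii).
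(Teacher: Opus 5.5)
Your route is the paper's: identify full-memory AA with GMRES, then exhibit an admissible polynomial of degree $\nu$ with $(L-I)P(L)=0$. Your hesitation at the last step is justified, and it is the paper's proof that breaks there. It asserts that $q(\zeta)=\prod_{\lambda\neq1}(\zeta-\lambda)/(1-\lambda)$ satisfies $(L-I)q(L)=0$ on $\R^d$. This fails whenever a non-unit eigenvalue carries a nontrivial Jordan block, and monotonicity does not exclude that.

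For instance, on $\R^3$ let $Ne_1=0$, $Ne_2=e_1$, $Ne_3=e_2$ and $M=I+N$. The symmetric part of $M$ has eigenvalues $1$ and $1\pm\frac{\sqrt2}{2}$, so $M$ is strongly monotone with $Z=\{0\}$. Its resolvent $L=(2I+N)^{-1}=\frac12I-\frac14N+\frac18N^2$ has the single eigenvalue $\frac12$, so $\nu=1$, but $(L-\frac12I)^2e_3=\frac1{16}e_1\neq0$. Since $L-I$ is invertible, $r(y_k)=(L-I)Q_k(L)e_3=0$ would require $(\zeta-\frac12)^3\mid Q_k$. That is impossible when $\deg Q_k\le2$ and $Q_k(1)=1$. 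So from $y_0=e_3$ no method in $\Aa_\infty$, AA included, terminates by $k=\nu+1=2$, and the theorem as stated is false.

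Your repair is the right one. Set $\nu'=\sum_{\lambda\in\sigma(L)\setminus\{1\}}\operatorname{ind}(\lambda)$. Then the real polynomial $\prod_{\lambda\neq1}\big((\zeta-\lambda)/(1-\lambda)\big)^{\operatorname{ind}(\lambda)}$ works and gives termination by $k\le\nu'+1$. This equals $\nu+1$ exactly when the non-unit part of $L$ is diagonalizable, e.g.\ for normal $M$. If you use the minimal polynomial of $e_0$ instead, first divide out its factor $\zeta-1$, which occurs at most simply, then normalize at $1$. Your lemma that the eigenvalue $1$ is semisimple is genuinely needed here; the paper uses it only tacitly.

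The one step you assert without proof is that AA stagnation already forces a zero surrogate. This is false for GMRES in general: on a skew-symmetric system the very first GMRES step stagnates. The paper's proof only cites the GMRES equivalence and does not address it either. It does hold here, by your own Jordan-chain argument.

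Suppose $\aff\{Q_0,\dots,Q_k\}$ is the full set $\{P:\deg P\le k,\ P(1)=1\}$; this holds for $k=1$ since $Q_0=1$ and $Q_1=\zeta$. Let $P_k$ be the minimizer AA selects.
\begin{itemize}
\item If $\deg P_k=k$, then $Q_{k+1}=\zeta P_k$ enlarges the affine span to all admissible polynomials of degree $\le k+1$.
\item If $\deg P_k<k$, then $V=(\zeta-1)P_k$ is an admissible direction. Optimality gives $\inner{\tilde r}{T\tilde r}=0$ for $\tilde r=(L-I)P_k(L)e_0$ and $T=I-L$. Firm nonexpansiveness, $\inner{Tu}{u}\ge\norm{Tu}^2$, forces $T\tilde r=0$, i.e.\ $T^2v=0$ for $v=P_k(L)e_0$. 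Your argument applied to the monotone map $T$ then gives $Tv=0$, so $\tilde r=0$ and $r(y_{k+1})=L\tilde r=0$.
\end{itemize}
With this filled in, your proof establishes the corrected statement.
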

\begin{proof}
On affine problems AA with full memory is GMRES applied to
$(I-L)y=-r_0$ \cite{walker2011anderson,evans2020proof}, so after $k$ steps
the residual equals $\min\{\norm{(L-I)q(L)y_0}:q(1)=1,\deg q\le k\}$.
Take $q$ to be the polynomial of degree $\nu$ vanishing on the spectrum of
$L$ off $\{1\}$ with $q(1)=1$: then $(L-I)q(L)=0$ on $\R^d$ because $L-I$
annihilates the eigenvalue $1$ and $q$ annihilates the rest; hence the
minimum is $0$ at $k=\nu$, realized after the $(\nu+1)$-st oracle call.
\end{proof}

\begin{remark}\label{rem:affinem}
With memory $m<\nu$ the same argument gives per-window contraction by the
min-max polynomial number of degree $m$ on the spectrum; on the rotation
spectra of Sections~\ref{sec:adaptive}--\ref{sec:phase} this number is
$\ge1-O(m^2 s)$ by Bernstein's inequality on the arc near $\zeta=1$, which
is exactly why finite memory cannot exploit a floor $s=O(1/K)$.
\end{remark}

\subsection{Strong monotonicity: linear rates}

\begin{proposition}\label{prop:strong}
Let $M$ be $\mu$-strongly monotone. Then $J$ is a contraction with constant
$1/(1+\mu)$ \cite{rockafellar1976mono}; PPM satisfies
$\norm{r_k}\le(1+\mu)^{-k}d_0$; and AA-PPM with any memory satisfies the
same per-step bound while being per-step optimal on linear $M$
(Lemma~\ref{lem:perstep}), hence never slower and classically faster by the
Chebyshev factor of the eigenvalue inclusion
$\sigma(L)\subset\{\abs{\zeta}\le(1+\mu)^{-1}\}$.
\end{proposition}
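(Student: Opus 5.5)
The statement has three claims: $J$ is a $1/(1+\mu)$-contraction, PPM has the geometric residual rate, and AA-PPM shares the per-step bound while being per-step optimal on linear $M$.

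\emph{Contraction of $J$.} Take $x=J(y)$ and $x'=J(y')$, so that $y-x\in Mx$ and $y'-x'\in Mx'$. Strong monotonicity gives
\[
\inner{(y-x)-(y'-x')}{x-x'}\ge\mu\norm{x-x'}^2,
\]
hence $\inner{y-y'}{x-x'}\ge(1+\mu)\norm{x-x'}^2$. Cauchy--Schwarz then yields $\norm{x-x'}\le(1+\mu)^{-1}\norm{y-y'}$. In particular $Z$ is a singleton $\{z\}$.

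\emph{PPM rate.} Along PPM, $y_k-z=J^k(y_0)-J^k(z)$, so $\norm{y_k-z}\le(1+\mu)^{-k}d_0$. For the residual, use that $J$ is firmly nonexpansive with fixed point $z$. This gives $\norm{Jy-y}^2\le\norm{y-z}^2-\norm{Jy-z}^2\le\norm{y-z}^2$, i.e.\ $\norm{r(y)}\le d(y)$. Therefore $\norm{r_k}\le d(y_k)\le(1+\mu)^{-k}d_0$.

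\emph{AA-PPM, linear case.} On linear $M$ I would use Lemma~\ref{lem:perstep}: $\norm{r(\hat y)}\le\norm{L}\,\norm{r_k}$ with $\norm{L}\le(1+\mu)^{-1}$. This contracts the residual at each step, so the residual after $k$ steps is at most $(1+\mu)^{-k}\norm{r_0}\le(1+\mu)^{-k}d_0$. Per-step optimality is exactly Lemma~\ref{lem:perstep}. For the full-memory version, the polynomial reduction (Lemma~\ref{lem:polyred}) together with the GMRES equivalence bounds the residual by
\[
\min_{q(1)=1}\norm{(L-I)q(L)y_0}\le\norm{L-I}\min_{q(1)=1}\max_{\abs\zeta\le\rho}\abs{q(\zeta)}\,d_0,\qquad \rho=(1+\mu)^{-1},
\]
for normal $L$. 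The choice $q(\zeta)=\zeta^k$ is admissible and gives $\rho^k$. This is the "Chebyshev factor" of the disk inclusion, and for a disk centered at $0$ it coincides with $\rho^k$. So the "classically faster" claim refers only to spectra tighter than the disk, and I would phrase it as an inequality, "never slower".

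\emph{Main obstacle: nonlinear $M$.} The AA step $\hat y$ is an affine combination, which may extrapolate, so $\norm{r(\hat y)}$ is not controlled by the window (Proposition~\ref{prop:nohist}). The cleanest fix is to read "the same per-step bound" as holding for the safeguarded version. Under Algorithm~1, Theorem~\ref{thm:cert}(i) gives $\norm{r(b_K)}\le\norm{r^c_K}\le(1+\mu)^{-K}d_0$, with the oracle-count caveat of Theorem~\ref{thm:cert}. The unsafeguarded AA claim I would restrict to linear $M$.
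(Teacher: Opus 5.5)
Your argument is correct and follows the paper's own justification. The paper gives no separate proof, only the Rockafellar citation for the $(1+\mu)^{-1}$-contraction and Lemma~\ref{lem:perstep} for AA. You supply exactly those steps: strong monotonicity plus Cauchy--Schwarz for the contraction, $\norm{r(y)}\le d(y)$ from firm nonexpansiveness for the PPM residual, and $\norm{r_{k+1}}\le\norm{L}\,\norm{r_k}$ with $\norm{L}\le(1+\mu)^{-1}$ for AA on linear $M$.

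Where you depart from the statement---confining the raw-AA bound to linear $M$, or to the certified output of Algorithm~1---you are right, and not just for lack of a proof. Lemma~\ref{lem:perstep} is linear-only, so the paper's justification supports only your reading. Moreover, the bound is false for nonlinear strongly monotone $M$. On $\R$ let $M=\partial f$ with $f(x)=x^2/18+0.4x+\iota_{[0,\infty)}(x)$, so $\mu=1/9$, $Z=\{0\}$ and $J(y)=0.9\max(y-0.4,0)$. From $y_0=1$ one gets $y_1=x_0=0.54$, $x_1=0.126$, $r_0=-0.46$, $r_1=-0.414$. AA with any memory $m\ge1$ zeroes the surrogate with $\gamma=(-9,10)$, so $y_2=-3.6$ and $\norm{r_2}=3.6>0.81=(1+\mu)^{-2}d_0$. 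This is exactly the mechanism of Proposition~\ref{prop:nohist}: the extrapolant leaves $\dom M$.

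Your remark on the ``Chebyshev factor'' is also correct. For the disk $\{\abs{\zeta}\le\rho\}$ the constrained minimax is exactly $\rho^k$ (maximum principle for $q(\zeta)/\zeta^k$ on $\abs{\zeta}\ge\rho$), so no speed-up follows from the inclusion as stated. The ``classically faster'' claim can be rescued with a sharper inclusion. Your inequality $\inner{y-y'}{x-x'}\ge(1+\mu)\norm{x-x'}^2$ says that $(1+\mu)J$ is firmly nonexpansive. For linear $M$ this means $\norm{L-cI}\le c$ with $c=1/(2(1+\mu))$. Then $q(\zeta)=((\zeta-c)/(1-c))^k$ gives $\norm{q(L)}\le(1+2\mu)^{-k}$, with no normality assumption. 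The instance-optimal (full-memory) residual polynomial therefore genuinely beats PPM's $(1+\mu)^{-k}$.
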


\subsection{Piecewise-affine operators}

\begin{proposition}\label{prop:pwa}
Let $M$ be piecewise affine and maximal monotone (e.g.\ $M=\partial f$ with
$f$ convex piecewise linear-quadratic). If the full-memory AA-PPM
trajectory is eventually contained in a single affine piece of $M$, then it
terminates finitely at a zero of $M$, with the bound of
Theorem~\ref{thm:affine} applied to that piece.
\end{proposition}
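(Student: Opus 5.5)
The plan is to localize the trajectory to a single affine piece and then transport the finite-termination argument of Theorem~\ref{thm:affine} to that piece. By hypothesis there is an index $k_0$ and an affine maximal monotone operator $M_P(x)=Ax+b$ (the piece) such that, for all $k\ge k_0$, every query $y_k$ and resolvent output $x_k=J(y_k)$ falls in the region where $M$ coincides with $M_P$. On that region $J(y)=J_P(y)$, where $J_P=(I+M_P)^{-1}$ is the affine resolvent $J_P(y)=L_Py+c$, $L_P=(I+A)^{-1}$. Hence from time $k_0$ on, the full-memory AA-PPM iteration is indistinguishable from full-memory AA applied to the affine map $J_P$, started from the window accumulated up to $k_0$.

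Next I translate to the homogeneous setting. If $M_P$ has a zero $z_P$, then $J_P(y)-z_P=L_P(y-z_P)$, and the affine-combination structure of Definition~\ref{def:class} (coefficients summing to one) commutes with the shift by $z_P$. The analogue of Lemma~\ref{lem:polyred}, run from the restart point $y_{k_0}$, gives $y_k-z_P=Q(L_P)(y_{k_0}-z_P)+(\text{span of earlier window vectors})$. Rather than track the pre-$k_0$ window, I use the GMRES equivalence invoked in Theorem~\ref{thm:affine}: full-memory AA minimizes $\norm{r(\hat y)}$ over an affine space that contains the Krylov affine space generated from $r_{k_0}$ under $L_P$. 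Therefore the residual after $j$ further steps is at most $\min\{\norm{(L_P-I)q(L_P)(y_{k_0}-z_P)}:q(1)=1,\deg q\le j\}$. The polynomial used in Theorem~\ref{thm:affine}, which vanishes on $\sigma(L_P)\setminus\{1\}$ with $q(1)=1$, gives residual zero within $\nu_P+1$ further oracle calls, where $\nu_P$ is the number of distinct eigenvalues of $L_P$ different from $1$.

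It remains to check that the limit is a zero of $M$ and not merely of $M_P$. The terminal point $y^\ast$ has $r(y^\ast)=0$ for the true residual, because the trajectory stays in the piece and $J=J_P$ there. Thus $y^\ast=J(y^\ast)$, i.e.\ $0\in M(y^\ast)$.

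The main obstacle is the case where $M_P$ has no zero, i.e.\ $-b\notin\operatorname{range}A$, so there is no $z_P$ to shift by. I would handle it by splitting $\R^d$ into $\ker A\oplus\operatorname{range}A$ (for a monotone linear $A$ these are invariant in the appropriate sense, since $\ker A=\ker A^{*}$) and noting that on $\ker A$ the map $J_P$ is a pure translation. The residual there is then constant along the trajectory and cannot vanish, which contradicts the boundedness of the trajectory inside the piece. If that contradiction proves delicate, I would instead strengthen the hypothesis to say that the piece contains a zero of $M$, which is the intended reading given the word ``conditional'' in the contributions. A secondary subtlety is that, with full memory, pre-$k_0$ iterates remain in the window. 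This is harmless because enlarging the feasible affine set only decreases the minimized residual, and on linear pieces Lemma~\ref{lem:perstep} makes the surrogate residual control the true residual.
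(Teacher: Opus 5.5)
Your route (from $k_0$ on, replace $J$ by the affine resolvent $J_P(y)=L_Py+c$ of the piece and invoke Theorem~\ref{thm:affine}) is exactly what the paper intends. The paper gives no argument beyond that and only flags piece identification as an assumption. However, the step you use to dismiss the pre-$k_0$ window is false, and it is the crux. Lemma~\ref{lem:perstep} needs $x_i=J_P(y_i)$ for \emph{every} pair carrying weight. For a mixed window and $\hat y=\sum_i\gamma_ix_i$ in the piece, one only has
\[
r(\hat y)=L_P\sum_i\gamma_i r_i+\sum_{i<k_0}\gamma_i\bigl(J_P(y_i)-J(y_i)\bigr).
\]
AA minimizes the surrogate $\norm{\sum_i\gamma_ir_i}$, not $\norm{r(\hat y)}$. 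Enlarging the window therefore lowers the first term but does nothing to the second. For the same reason the post-$k_0$ residuals are not a Krylov sequence from $r_{k_0}$, and the GMRES equivalence you cite is unavailable, since it needs the whole window to be generated by one affine map.

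This is not a proof artifact. Take $M(x)=\max(x,-\tfrac12)$, the derivative of a convex piecewise linear-quadratic function. Then $J(y)=y/2$ on $\{y\ge-1\}$ and $J(y)=y+\tfrac12$ otherwise. From $y_0=-\tfrac54$ one gets $y_1=-\tfrac34$. The surrogate minimum is $0$ for every $k\ge1$, and the display gives $y_{k+1}=-\gamma^k_0/4$ and $r_{k+1}=\gamma^k_0/8$.
\begin{itemize}
\item At $k=1$ the minimizer $\gamma^1=(-3,4)$ is unique, yet $r_2=-\tfrac38\neq0$.
\item For $k\ge2$, take the minimum-norm minimizer, which Definition~\ref{def:aam} admits. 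With $S_k=\sum_ir_i$ and $Q_k=\sum_ir_i^2$ it has $\gamma^k_0=(Q_k-\tfrac12S_k)/((k+1)Q_k-S_k^2)$. This equals $9/43$ at $k=2$ and stays in $(0,9/43]$ by induction.
\end{itemize}
So this AA trajectory never leaves the piece and never terminates. The argument only goes through if the hypothesis is read as ``every window pair with nonzero weight lies in the piece'' (e.g.\ AA restarted at $k_0$). Then Lemma~\ref{lem:perstep} gives $r(\hat y)=L_P\sum_i\gamma_ir_i$, so a vanishing surrogate forces a vanishing residual, and Theorem~\ref{thm:affine} applies verbatim from $y_{k_0}$.

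Your treatment of a zero-free piece also fails. The $\ker A$-component of the in-piece residual is indeed the constant $-b_K\neq0$, but nothing bounds AA iterates, and pieces may be unbounded, so there is no contradiction. For example, take $f=\abs{x}$ and $y_0=5$. Every residual on $\{y>1\}$ equals $-1$, so every $\gamma$ is a minimizer. The choice $\gamma^k=(k+1,-k,0,\dots,0)$ gives $y_{k+1}=4+k$, which stays in the piece forever. So $\zer M_P\neq\emptyset$ must be assumed, as in your fallback. The proof of Theorem~\ref{thm:affine} tacitly assumes this too when it writes the residual as $(L-I)q(L)y_0$.

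Finally, the annihilating polynomial you borrow must be the minimal polynomial of $L_P$ off the eigenvalue $1$, not one vanishing only on the distinct eigenvalues; this defect is inherited from the proof of Theorem~\ref{thm:affine}. For the monotone $A=\begin{pmatrix}1&1&0\\0&1&1\\0&0&1\end{pmatrix}$, $L_P=(I+A)^{-1}$ has a single $3\times3$ Jordan block at $\tfrac12$. So $\nu_P=1$, yet for a cyclic $y_0$ no iterate $y_k$ with $k\le2$ has zero residual.
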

\begin{remark}
Eventual identification of the active piece is an \emph{assumption}, not a
theorem, in this generality; proving activity identification for AA-PPM
(analogous to proximal identification results) is open. We flag this
explicitly to avoid overstating the result.
\end{remark}

\subsection{H\"olderian growth: the correct trichotomy}

\begin{definition}\label{def:growth}
$M$ satisfies the \emph{H\"olderian growth condition of order $q>1$} (with
constant $\mu>0$, on $\{d(\cdot)\le\rho\}$ or globally) if
\begin{equation}\label{eq:growth}
\inner{w}{x-\Pi_Zx}\ \ge\ \mu\, d(x)^q
\qquad\forall\, w\in Mx .
\end{equation}
\end{definition}

For $M=\partial f$, \eqref{eq:growth} with $q\ge2$ follows from the
H\"olderian error bound $f(x)-f^\star\ge c\,d(x)^q$ via convexity.

\begin{theorem}\label{thm:growth}
Let $M$ be maximal monotone satisfying \eqref{eq:growth}, and let
$y_{k+1}=Jy_k$ (PPM), $d_k=d(y_k)$.
\begin{enumerate}[label=(\alph*)]
\item $q>2$: $d_k=O(k^{-1/(q-2)})$; if moreover
      $\dist(0,Mx)\le L\,d(x)^{q-1}$ (automatic for $\partial f$ of
      H\"olderian $f$), then
      $\norm{r_k}=O(k^{-(q-1)/(q-2)})$. Both exponents are sharp:
      $f(x)=\abs{x}^q/q$ attains them.
\item $q=2$: linear convergence,
      $d_{k+1}^2\le d_k^2/(1+2\mu)$.
\item $q<2$: local superlinear convergence of order $1/(q-1)>1$:
      $d_{k+1}\le (2/\mu)^{1/(q-1)}\,d_k^{\,1/(q-1)}$ once
      $d_k$ is small enough.
\end{enumerate}
The same rates hold for Algorithm~1 (by Theorem~\ref{thm:cert}(i)) and for
raw AA-PPM on linear problems (by Corollary~\ref{cor:linsafe}).
\end{theorem}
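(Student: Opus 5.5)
The whole trichotomy will come from one \emph{one-step inequality} for PPM, obtained from the growth condition \eqref{eq:growth} applied at the new iterate. Write $x=y_{k+1}=Jy_k$, $w=y_k-x\in Mx$ (so $r_k=-w$), and $z=\Pi_Z x$. Expanding $\norm{y_k-z}^2=\norm{x-z}^2+\norm{w}^2+2\inner{w}{x-z}$, and using $d_k\le\norm{y_k-z}$ on the left and $\inner{w}{x-z}\ge\mu d_{k+1}^q$ on the right, gives
\[
d_{k+1}^2+\norm{r_k}^2+2\mu\,d_{k+1}^q\ \le\ d_k^2 .
\]
In the local version this is legitimate at every step, because $d_k$ is nonincreasing along PPM: $J$ is nonexpansive and fixes $Z$. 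Hence the iterates never leave $\{d\le\rho\}$ once they enter it.

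For (b), $q=2$, the inequality gives $(1+2\mu)d_{k+1}^2\le d_k^2$ immediately. For (c), $q<2$, I will use a second consequence of the growth condition instead. Cauchy--Schwarz gives $\mu d_{k+1}^q\le\inner{w}{x-z}\le\norm{w}\,d_{k+1}$, so $\norm{r_k}\ge\mu d_{k+1}^{q-1}$. Nonexpansiveness gives $\norm{r_k}=\norm{y_k-Jy_k}\le\norm{y_k-\Pi_Zy_k}=d_k$. Together these yield $d_{k+1}\le(d_k/\mu)^{1/(q-1)}\le(2/\mu)^{1/(q-1)}d_k^{1/(q-1)}$. This is contractive, and hence superlinear of order $1/(q-1)>1$, once $d_k<\mu^{1/(2-q)}$.

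For (a), $q>2$, set $a_k=d_k^2$, so that $a_k-a_{k+1}\ge2\mu a_{k+1}^{q/2}$. I then run the standard argument on $b_k=a_k^{-(q-2)/2}$ and split into two cases.
\begin{itemize}
\item If $a_{k+1}\ge a_k/2$, the mean value theorem applied to $t\mapsto t^{-(q-2)/2}$ on $[a_{k+1},a_k]$ gives $b_{k+1}-b_k\ge c(q)\mu$.
\item If $a_{k+1}<a_k/2$, then directly $b_{k+1}\ge2^{(q-2)/2}b_k$, so $b_{k+1}-b_k\ge(2^{(q-2)/2}-1)b_0$.
\end{itemize}
In either case $b_k$ grows at least linearly in $k$, so $d_k=O(k^{-1/(q-2)})$.

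For the residual in (a) I use the resolvent inequality $\norm{y-Jy}\le\norm{m}$ for every $m\in My$. This follows from monotonicity applied to the pairs $(Jy,\,y-Jy)$ and $(y,m)$, which gives $\norm{y-Jy}^2\le\inner{m}{y-Jy}$. Hence $\norm{r_k}\le\dist(0,My_k)\le L\,d_k^{q-1}=O(k^{-(q-1)/(q-2)})$.

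Sharpness is the step I expect to be the main obstacle, because the argument must be quantitative in both directions. Take $f(x)=\abs{x}^q/q$ on $\R$, so $Z=\{0\}$ and the growth condition holds with $\mu=1$. PPM reads $y_{k+1}+y_{k+1}^{q-1}=y_k$ with $y_k>0$, so $d_k-d_{k+1}=d_{k+1}^{q-1}$. I plan to compare with the ODE $\dot d=-d^{q-1}$: the sum $\sum_k\int_{d_{k+1}}^{d_k}t^{1-q}\,dt$ increases by at most $1$ per step, because $t^{1-q}\le d_{k+1}^{1-q}$ on that interval. This gives $d_k^{-(q-2)}\le d_0^{-(q-2)}+(q-2)k$, i.e.\ $d_k\gtrsim k^{-1/(q-2)}$. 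Consequently $\abs{r_k}=d_{k+1}^{q-1}\gtrsim k^{-(q-1)/(q-2)}$, so both exponents are attained.

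Finally, the statement about Algorithm~1 follows by quoting Theorem~\ref{thm:cert}(i), since its certified chain is exactly PPM and the output dominates it. The statement about raw AA-PPM on linear problems is the monotone-residual statement of Corollary~\ref{cor:linsafe}, combined with per-step optimality (Lemma~\ref{lem:perstep}), which makes the AA residual no larger than the corresponding PPM residual.
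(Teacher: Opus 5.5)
\paragraph{The one-step inequality has a gap.} Parts (a) and (b) rest on your one-step inequality, and the chain you give does not prove it. The paper's own derivation makes the same choice $z=\Pi_Z y_{k+1}$ and the same expansion, so it has the identical defect.

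The expansion gives the lower bound $\norm{y_k-z}^2\ge d_{k+1}^2+\norm{r_k}^2+2\mu d_{k+1}^q$. The inequality $d_k\le\norm{y_k-z}$ is a second \emph{lower} bound on the same quantity. Two lower bounds on $\norm{y_k-z}^2$ say nothing about $d_k^2$ versus $d_{k+1}^2+\cdots$. The step is valid only when $\Pi_Z y_k=\Pi_Z y_{k+1}$, e.g.\ when $Z$ is a singleton, which does cover your sharpness example.

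The missing idea is to center at $z'=\Pi_Z y_k$ and use the projection inequality at $z=\Pi_Z x$. Writing $y_k-z'=(x-z)+(w+z-z')$,
\[
d_k^2=d_{k+1}^2+2\inner{w}{x-z}+2\inner{x-z}{z-z'}+\norm{w+z-z'}^2\ \ge\ d_{k+1}^2+2\mu d_{k+1}^q,
\]
because $\inner{x-z}{z'-z}\le0$ and the square is nonnegative. This is all that (a) and (b) use.

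You should drop the $\norm{r_k}^2$ term. The projection property of $z'$ gives $2\inner{x-z}{z-z'}+\norm{w+z-z'}^2\le\norm{r_k}^2-\norm{z-z'}^2$. So your stronger inequality fails whenever the growth inequality is tight at $x$ and $z\neq z'$. Nothing downstream needs it.

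\paragraph{Where your route differs, and is more careful.}
\begin{itemize}
\item In (a), your two-case treatment of $b_k$ is exactly what the implicit recursion $a_k-a_{k+1}\ge2\mu a_{k+1}^{q/2}$ requires. The paper's appendix Lemma~\ref{lem:seq} instead uses $\phi(u_{k+1})-\phi(u_k)\ge\phi'(u_{k+1})(u_{k+1}-u_k)$ for $\phi(u)=u^{1-\alpha}$, but convexity gives the opposite direction. Its $u_0$-free bound is also false: with $c=1$, $\alpha=2$, $u_0=10^6$, the recursion admits $u_1\approx999.5>1$. Your constant depends on $b_0$, which is enough for an $O(\cdot)$ claim.
\item Your residual bound $\norm{r(y)}\le\dist(0,My)$ at $y_k$ uses the hypothesis on the minimal-norm element correctly. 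The paper's $\norm{w_{k+1}}\le L\,d_{k+1}^{q-1}$ tacitly needs $My_{k+1}$ to be a singleton.
\item In (c), $\norm{r_k}\le d_k$ (firm nonexpansiveness of $I-J$) improves the paper's $2d_k$.
\item Your integral comparison $d_k^{-(q-2)}\le d_0^{-(q-2)}+(q-2)k$ actually proves the sharpness claim, which the paper only asserts.
\end{itemize}

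\paragraph{The AA clause.} Lemma~\ref{lem:perstep} does not make the AA residual no larger than the PPM residual. It gives $\norm{r_{k+1}}=\norm{Ls_k}$, where $s_k$ is the minimized surrogate and $\norm{s_k}\le\norm{r_k}$. That compares the AA chain with itself, not with the PPM trajectory from $y_0$. A residual comparison would not transfer the $d_k$ rates in any case. The paper's appeal to Corollary~\ref{cor:linsafe} is no stronger, since monotone residuals carry no rate.

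An argument that does work on linear problems:
\begin{itemize}
\item The growth condition implies $\inner{Mv}{v}\ge\mu'\norm{v}^2$ on $Z^\perp$ for some $\mu'>0$.
\item Hence $\norm{Lu}\le\norm{u}/(1+\mu')$ for $u\in Z^\perp$.
\item Since all residuals, and therefore $s_k$, lie in $Z^\perp$, you get $\norm{r_{k+1}}\le\norm{r_k}/(1+\mu')$ directly.
\end{itemize}
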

\begin{proof}
The resolvent identity gives $w_{k+1}=y_k-y_{k+1}\in My_{k+1}$. Firm
nonexpansiveness of $J$ with $z=\Pi_Zy_{k+1}$ yields
\[
d_k^2-d_{k+1}^2
\ge \norm{w_{k+1}}^2+2\inner{w_{k+1}}{y_{k+1}-z}
\ge 2\mu\, d_{k+1}^{\,q},
\]
where the first inequality is
$\norm{y_k-z}^2=\norm{y_k-y_{k+1}}^2+\norm{y_{k+1}-z}^2
+2\inner{y_k-y_{k+1}}{y_{k+1}-z}$ and the second uses \eqref{eq:growth}.

(a) $u_k=d_k^2$ satisfies $u_k-u_{k+1}\ge 2\mu u_{k+1}^{q/2}$ with
$q/2>1$; Lemma~\ref{lem:seq} (Appendix~\ref{app:seq}) gives
$u_k=O(k^{-2/(q-2)})$. With $\dist(0,Mx)\le Ld(x)^{q-1}$,
$\norm{r_k}=\norm{w_{k+1}}\le L d_{k+1}^{\,q-1}=O(k^{-(q-1)/(q-2)})$.
Sharpness: for $f(x)=\abs{x}^q/q$ the resolvent solves
$x+\abs{x}^{q-2}x=y$, and a direct asymptotic expansion gives
$d_k=\Theta(k^{-1/(q-2)})$ and $\norm{r_k}=\Theta(k^{-(q-1)/(q-2)})$.

(b) $q=2$: $d_k^2-d_{k+1}^2\ge2\mu d_{k+1}^2$, i.e.\
$d_{k+1}^2\le d_k^2/(1+2\mu)$.

(c) $q<2$: from \eqref{eq:growth} and Cauchy--Schwarz,
$\norm{w_{k+1}}\ge\mu d_{k+1}^{\,q-1}$; but
$\norm{w_{k+1}}=\norm{y_k-y_{k+1}}\le d_k+d_{k+1}\le 2d_k$, so
$d_{k+1}\le(2/\mu)^{1/(q-1)}d_k^{1/(q-1)}$.
\end{proof}

\begin{remark}[The error-bound parametrization]\label{rem:eb}
In terms of the H\"olderian error bound
$d(x)\le c\,\dist(0,Mx)^p$ ($p\in(0,1]$), one has $q=1+1/p$ and the
trichotomy reads: $p\in(0,1)$ (flat minima):
$d_k=O(k^{-p/(1-p)})$, $\norm{r_k}=O(k^{-1/(1-p)})$; $p=1$ (metric
subregularity): linear; the formally dual regime $p>1$ (sharp minima)
corresponds to $q<2$ and superlinear convergence of order $p$. These are
the correct exponents; they correct threshold errors that appear when the
Fej\'er term $\norm{w_{k+1}}^2$ is used alone (it is dominated by the
monotonicity term $2\inner{w_{k+1}}{y_{k+1}-z}$ precisely in the flat
regime).
\end{remark}

\section{Numerical experiments}\label{sec:numerics}

All experiments use the rotation-block resolvents of
Definition~\ref{def:barrier}-type operators, which are exactly the worst
case behind the tight PPM rate \cite{guyang2020tight,bravo2019sharp}. Code
is available from the author upon request.

\subsection{Experiment 1: dense ladders --- the barrier is real}\label{sec:exp1}

We take the geometric angle ladder $\theta_j=\frac{\pi}8(1.08)^{-j}$,
$j=0,\dots,234$ (from $\pi/8$ down to $10^{-7}$; floor $s\approx10^{-7}\ll
1/K$ for all $K$ used: the barrier regime of Theorem~\ref{thm:barrier}),
equal initial mass, $\norm{y_0}=1$. Figure~\ref{fig:ladder} and
Table~\ref{tab:ladder} report PPM, raw AA(3), and certified AA(3)
(Algorithm~1, $\delta=0$, $m=3$). All curves have log-log slope
$\approx-1/2$: on dense spectra AA does \emph{not} improve the rate, only
the constant ($\approx1.5\times$ at equal iteration count). Since the
problem is linear, the certified scheme accepts every probe
(Corollary~\ref{cor:linsafe}): its output coincides with raw AA and keeps
the PPM floor intact (verified pointwise); even after paying the factor-two
certification overhead it still edges PPM at equal oracle cost. This is
precisely the prediction of Theorems~\ref{thm:barrier}
and~\ref{thm:mainlb}: since the ladder's floor is below $1/K$, no
residual-polynomial method can beat the $1/(K+1)$ envelope here (up to the
$\log K$ slack of the roots-of-unity instance,
Remark~\ref{rem:artifact}), and AA's greedy per-step rule lands at the PPM
end of it.

\begin{figure}[htbp]
\centering
\begin{tikzpicture}
\begin{loglogaxis}[width=0.60\textwidth,height=0.42\textwidth,
  xlabel={iterations $k$},ylabel={$\norm{r_k}$},
  legend style={at={(1.02,0.5)},anchor=west,font=\small},
  grid=major,grid style={gray!20}]
\addplot[thick,cblue] coordinates {
(1,6.9925e-02) (2,6.7176e-02) (3,6.4612e-02) (4,6.2220e-02) (5,5.9987e-02)
(7,5.5951e-02) (9,5.2421e-02) (12,4.7916e-02) (16,4.3086e-02) (21,3.8444e-02)
(28,3.3711e-02) (36,2.9888e-02) (48,2.5963e-02) (63,2.2702e-02) (83,1.9804e-02)
(110,1.7219e-02) (145,1.5009e-02) (191,1.3084e-02) (251,1.1418e-02)
(331,9.9464e-03) (437,8.6586e-03) (575,7.5497e-03) (759,6.5721e-03)
(1000,5.7263e-03)};
\addlegendentry{PPM}
\addplot[thick,cverm,dashed] coordinates {
(1,6.9925e-02) (2,6.7176e-02) (3,3.5368e-02) (4,3.4673e-02) (5,3.4013e-02)
(7,2.5667e-02) (9,2.2744e-02) (12,2.1925e-02) (16,2.0844e-02) (21,1.9545e-02)
(28,1.8071e-02) (36,1.6681e-02) (48,1.5094e-02) (63,1.3628e-02) (83,1.2131e-02)
(110,1.0740e-02) (145,9.4550e-03) (191,8.3360e-03) (251,7.3482e-03)
(331,6.4387e-03) (437,5.6247e-03) (575,4.9239e-03) (759,4.2933e-03)
(1000,3.7463e-03)};
\addlegendentry{AA(3)}
\addplot[thin,gray,dotted] coordinates {
(1,2.0000e-01) (2,1.4142e-01) (3,1.1547e-01) (4,1.0000e-01) (5,8.9443e-02)
(7,7.5593e-02) (9,6.6667e-02) (12,5.7735e-02) (16,5.0000e-02) (21,4.3644e-02)
(28,3.7796e-02) (36,3.3333e-02) (48,2.8868e-02) (63,2.5198e-02) (83,2.1953e-02)
(110,1.9069e-02) (145,1.6609e-02) (191,1.4471e-02) (251,1.2624e-02)
(331,1.0993e-02) (437,9.5673e-03) (575,8.3406e-03) (759,7.2595e-03)
(1000,6.3246e-03)};
\addlegendentry{$0.2\,k^{-1/2}$ (reference)}
\end{loglogaxis}
\end{tikzpicture}
\caption{Dense rotation ladder (floor $10^{-7}$): PPM and AA(3)
(Experiment~1). All slopes $\approx-1/2$: the adaptive barrier of
Theorem~\ref{thm:mainlb} bars any rate gain; only constants improve. On
this linear instance the output of certified AA(3) (Algorithm~1) coincides
with the AA(3) curve---acceptance is $100\%$ by
Corollary~\ref{cor:linsafe}---at two oracle evaluations per iteration; see
Table~\ref{tab:ladder} for the equal-cost comparison.}
\label{fig:ladder}
\end{figure}

\begin{table}[htbp]
\centering\small
\caption{Dense ladder, Experiment~1. Certified AA(3) costs $2$ evaluations
per iteration; the comparison at equal oracle cost reads it at iteration
$500$ ($1000$ evaluations) against PPM at iteration $1000$.}
\label{tab:ladder}
{\small
\begin{tabular}{lcccc}
\toprule
method & $\norm{r_{500}}$ & $\norm{r_{1000}}$ & log-log slope &
acceptance rate\\
\midrule
PPM & $8.10\cdot10^{-3}$ & $5.73\cdot10^{-3}$ & $-0.499$ & ---\\
AA(3) raw & $5.27\cdot10^{-3}$ & $3.75\cdot10^{-3}$ & $-0.486$ & ---\\
certified AA(3) (output) & $5.27\cdot10^{-3}$ (iters $500$/$1000$) &
--- & $-0.470$ & $1.00$\\
\bottomrule
\end{tabular}}
\end{table}

\subsection{Experiment 2: the extremal instance --- exactness to machine
precision}\label{sec:expExtremal}

On the extremal instance of Definition~\ref{def:extremal} we verify the
three claims of Theorem~\ref{thm:exactminimax} numerically. (i)~The
instance minimax, computed by solving the Gram system underlying
\eqref{eq:residsq}, matches $1/(K+1)$ to machine precision for
$K=1,\dots,100$, and the masses satisfy $w_0=w_{n-1}\to4/\pi^2$
(Table~\ref{tab:extremal}). (ii)~On the fixed instance with $K=30$,
Figure~\ref{fig:extremal} plots PPM, AA(3) and the averaged reflection
against the per-step floor $1/\sqrt{31(k+1)}$: the averaged reflection
\emph{coincides with the floor at every step} (max deviation
$6\cdot10^{-16}$), confirming that the degree-$k$ Fej\'er polynomial is
the instance-optimal degree-$k$ polynomial; AA(3) respects the floor and
sits between it and PPM; PPM is the slowest curve, as predicted by its
concentration on the near-fixed blocks. (iii)~Full-memory AA coincides
with the floor (it computes the instance-optimal polynomial by
construction): on the hardest instance, optimal adaptivity \emph{is}
averaged reflection.

\begin{table}[htbp]
\centering\small
\caption{The extremal instance (Experiment~2): instance minimax versus
$1/(K+1)$, and mass concentration at the two nearest nodes.}
\label{tab:extremal}
\begin{tabular}{rccccc}
\toprule
$K$ & instance minimax & $1/(K+1)$ & abs.\ error & $w_0$ & $w_0+w_{n-1}$\\
\midrule
$5$   & $0.1666666667$ & $0.1666666667$ & $2\cdot10^{-16}$ & $0.386700$ & $0.773400$\\
$10$  & $0.0909090909$ & $0.0909090909$ & $3\cdot10^{-16}$ & $0.397181$ & $0.794362$\\
$20$  & $0.0476190476$ & $0.0476190476$ & $2\cdot10^{-16}$ & $0.401539$ & $0.803079$\\
$30$  & $0.0322580645$ & $0.0322580645$ & $3\cdot10^{-17}$ & $0.403046$ & $0.806091$\\
$50$  & $0.0196078431$ & $0.0196078431$ & $7\cdot10^{-17}$ & $0.403887$ & $0.807774$\\
$100$ & $0.0099009901$ & $0.0099009901$ & $1\cdot10^{-16}$ & $0.404438$ & $0.808876$\\
$200$ & $0.0049751244$ & $0.0049751244$ & $4\cdot10^{-16}$ & $0.404713$ & $0.809426$\\
\bottomrule
\end{tabular}
\end{table}

\begin{figure}[htbp]
\centering
\begin{tikzpicture}
\begin{semilogyaxis}[width=0.55\textwidth,height=0.40\textwidth,
  xlabel={$k$},ylabel={$\norm{r_k}$},
  legend style={at={(1.02,0.5)},anchor=west,font=\small},
  grid=major,grid style={gray!20}]
\addplot[thick,cblue,mark=*,mark size=1.2pt] coordinates {
(1,1.2700e-01) (2,1.0370e-01) (3,8.9803e-02) (4,8.0322e-02) (5,7.3324e-02)
(6,6.7884e-02) (7,6.3500e-02) (8,5.9868e-02) (9,5.6796e-02) (10,5.4153e-02)
(11,5.1848e-02) (12,4.9814e-02) (13,4.8002e-02) (14,4.6374e-02) (15,4.4901e-02)
(16,4.3561e-02) (17,4.2333e-02) (18,4.1204e-02) (19,4.0161e-02) (20,3.9193e-02)
(21,3.8292e-02) (22,3.7450e-02) (23,3.6662e-02) (24,3.5921e-02) (25,3.5223e-02)
(26,3.4565e-02) (27,3.3942e-02) (28,3.3352e-02) (29,3.2791e-02) (30,3.2258e-02)};
\addlegendentry{averaged reflection $=$ floor (Thm.~\ref{thm:exactminimax})}
\addplot[thick,cverm,dashed,mark=square*,mark size=1.2pt] coordinates {
(1,1.2700e-01) (2,1.0999e-01) (3,9.4660e-02) (4,8.4003e-02) (5,7.7771e-02)
(6,7.3879e-02) (7,7.1496e-02) (8,6.9291e-02) (9,6.7280e-02) (10,6.5675e-02)
(11,6.4326e-02) (12,6.2964e-02) (13,6.1578e-02) (14,6.0332e-02) (15,5.9482e-02)
(16,5.8615e-02) (17,5.7695e-02) (18,5.6692e-02) (19,5.5937e-02) (20,5.5363e-02)
(21,5.4740e-02) (22,5.3980e-02) (23,5.3287e-02) (24,5.2757e-02) (25,5.2332e-02)
(26,5.1782e-02) (27,5.1209e-02) (28,5.0711e-02) (29,5.0341e-02) (30,4.9965e-02)};
\addlegendentry{AA(3)}
\addplot[thick,cblgr,dotted,mark=triangle*,mark size=1.4pt] coordinates {
(1,1.2700e-01) (2,1.0999e-01) (3,1.0040e-01) (4,9.3918e-02) (5,8.9098e-02)
(6,8.5305e-02) (7,8.2202e-02) (8,7.9592e-02) (9,7.7349e-02) (10,7.5391e-02)
(11,7.3657e-02) (12,7.2107e-02) (13,7.0706e-02) (14,6.9432e-02) (15,6.8265e-02)
(16,6.7190e-02) (17,6.6195e-02) (18,6.5269e-02) (19,6.4404e-02) (20,6.3594e-02)
(21,6.2833e-02) (22,6.2114e-02) (23,6.1436e-02) (24,6.0792e-02) (25,6.0181e-02)
(26,5.9600e-02) (27,5.9045e-02) (28,5.8516e-02) (29,5.8009e-02) (30,5.7524e-02)};
\addlegendentry{PPM}
\addplot[thick,crp,dashdotted] coordinates {
(1,1.2700e-01) (2,1.0370e-01) (3,8.9803e-02) (4,8.0322e-02) (5,7.3324e-02)
(6,6.7884e-02) (7,6.3500e-02) (8,5.9868e-02) (9,5.6796e-02) (10,5.4153e-02)
(11,5.1848e-02) (12,4.9814e-02) (13,4.8002e-02) (14,4.6374e-02) (15,4.4901e-02)
(16,4.3561e-02) (17,4.2333e-02) (18,4.1204e-02) (19,4.0161e-02) (20,3.9193e-02)
(21,3.8292e-02) (22,3.7450e-02) (23,3.6662e-02) (24,3.5921e-02) (25,3.5223e-02)
(26,3.4565e-02) (27,3.3942e-02) (28,3.3352e-02) (29,3.2791e-02) (30,3.2258e-02)};
\addlegendentry{per-step floor $1/\sqrt{31(k+1)}$}
\end{semilogyaxis}
\end{tikzpicture}
\caption{The extremal instance of Definition~\ref{def:extremal}, $K=30$
(Experiment~2): the averaged reflection coincides at every step with the
per-step instance floor $1/\sqrt{31(k+1)}$
(Theorem~\ref{thm:exactminimax}(iii)), AA(3) respects the floor, and PPM
is slowest. Full-memory AA coincides with the floor curve.}
\label{fig:extremal}
\end{figure}

\subsection{Experiment 3: the roots-of-unity barrier operator}
\label{sec:exp2}

On the operator $M_K$ of Definition~\ref{def:barrier} with $K=30$ and the
optimal-mass initial point $y_0^\star$ of \eqref{eq:masses},
Figure~\ref{fig:t3} plots AA(3) and PPM against the instance floor
$1/\Lambda_{30}$: by Theorem~\ref{thm:mainlb}(i) applied to this operator,
\emph{every} degree-$\le30$ polynomial method satisfies
$\norm{r_k}\ge1/\Lambda_{30}$ at every step $k\le30$, and both curves
respect it. The floor bites: AA(3) ends within a factor $4.6$ of
$1/\Lambda_{30}=0.0137$. Note that the extremal instance of
Definition~\ref{def:extremal} is strictly harder at the same budget: its
instance minimax is $1/31=0.0323$, a factor $2.4$ above $1/\Lambda_{30}$,
and the factor grows like $\Theta(\log K)$---the $\csc$ versus $\csc^2$
artifact of Remark~\ref{rem:artifact}. (The per-budget statement of
Theorem~\ref{thm:mainlb}---for each budget $k$ its own hard instance
$M_k$---gives the growing bound $1/\Lambda_k\asymp1/(k\log k)$.)

\begin{figure}[htbp]
\centering
\begin{tikzpicture}
\begin{semilogyaxis}[width=0.60\textwidth,height=0.40\textwidth,
  xlabel={$k$},ylabel={$\norm{r_k}$},
  legend style={at={(1.02,0.5)},anchor=west,font=\small},
  grid=major,grid style={gray!20}]
\addplot[thick,cblue,mark=*,mark size=1.2pt] coordinates {
(1,0.2097) (2,0.1796) (3,0.1521) (4,0.1326) (5,0.1222) (6,0.1164) (7,0.1109)
(8,0.1055) (9,0.1020) (10,0.0992) (11,0.0955) (12,0.0920) (13,0.0899)
(14,0.0881) (15,0.0852) (16,0.0827) (17,0.0813) (18,0.0796) (19,0.0774)
(20,0.0754) (21,0.0744) (22,0.0727) (23,0.0710) (24,0.0695) (25,0.0687)
(26,0.0672) (27,0.0657) (28,0.0645) (29,0.0638) (30,0.0627)};
\addlegendentry{AA(3)}
\addplot[thick,cverm,dashed,mark=square*,mark size=1.2pt] coordinates {
(1,0.2097) (2,0.1796) (3,0.1625) (4,0.1508) (5,0.1420) (6,0.1350) (7,0.1293)
(8,0.1245) (9,0.1203) (10,0.1166) (11,0.1134) (12,0.1104) (13,0.1078)
(14,0.1053) (15,0.1031) (16,0.1010) (17,0.0991) (18,0.0973) (19,0.0956)
(20,0.0940) (21,0.0925) (22,0.0911) (23,0.0897) (24,0.0884) (25,0.0872)
(26,0.0860) (27,0.0849) (28,0.0838) (29,0.0828) (30,0.0818)};
\addlegendentry{PPM}
\addplot[thick,crp,dashdotted] coordinates {
(1,0.0137) (2,0.0137) (3,0.0137) (4,0.0137) (5,0.0137) (6,0.0137) (7,0.0137)
(8,0.0137) (9,0.0137) (10,0.0137) (11,0.0137) (12,0.0137) (13,0.0137)
(14,0.0137) (15,0.0137) (16,0.0137) (17,0.0137) (18,0.0137) (19,0.0137)
(20,0.0137) (21,0.0137) (22,0.0137) (23,0.0137) (24,0.0137) (25,0.0137)
(26,0.0137) (27,0.0137) (28,0.0137) (29,0.0137) (30,0.0137)};
\addlegendentry{instance floor $1/\Lambda_{30}$}
\end{semilogyaxis}
\end{tikzpicture}
\caption{The barrier operator of Definition~\ref{def:barrier}, $K=30$,
optimal-mass $y_0^\star$ (Experiment~3): AA(3) and PPM against the instance
floor $1/\Lambda_{30}$, which by Theorem~\ref{thm:mainlb} bounds every
degree-$\le30$ polynomial method from below at every step on this instance.}
\label{fig:t3}
\end{figure}

\subsection{Experiment 4: the phase transition crossover}\label{sec:exp3}

For spectra with two-sided floor $\theta\in[\delta,\pi-\delta]$ (i.e.\
$\dist(1,\sigma)\ge\sin\delta$), $K=50$, we compare the Jackson estimator of
Theorem~\ref{thm:jackson} (best over
$N\in\{\lfloor(K+2)/8\rfloor,\lfloor(K+2)/4\rfloor,\lfloor(K+2)/2\rfloor\}$)
against the universal Fej\'er envelope $1/(K+1)=0.0196$.

\begin{table}[htbp]
\centering\small
\caption{Jackson vs.\ Fej\'er on floored spectra, $K=50$
(Experiment~4). The crossover sits at $\delta K\approx10$.}
\label{tab:jackson}
\begin{tabular}{cccc}
\toprule
$\delta K$ & Jackson $\max_\theta\abs{(1-\zeta)q_J(\zeta)}$ &
Fej\'er $1/(K+1)$ & ratio\\
\midrule
$\pi$ & $5.27\cdot10^{-2}$ & $1.96\cdot10^{-2}$ & $2.69$\\
$10$ & $8.23\cdot10^{-3}$ & $1.96\cdot10^{-2}$ & $0.42$\\
$50$ & $1.90\cdot10^{-3}$ & $1.96\cdot10^{-2}$ & $0.097$\\
\bottomrule
\end{tabular}
\end{table}

\subsection{Experiment 5: the nonlinear family $M=S+N_C$}\label{sec:expNonlin}

We illustrate Section~\ref{sec:beyond} on $M=S+N_C$ with $S$ the extremal
skew-adjoint of Definition~\ref{def:extremal} at $K=29$ (chosen odd so
that no block is the frozen block of Definition~\ref{def:extremal}) and
$C$ the Euclidean ball of radius $0.6$. The resolvent is computed through
the exact variational characterization $x=P_C(x+\tau(y-x-Sx))$, solved to
$\norm{\cdot}_\infty\le10^{-13}$; the solution of $y\in x+Sx+N_C(x)$ is
verified to satisfy the normal-cone inclusion with orthogonal defect
$10^{-12}$ (the constraint is active: $\norm{J(y_0)}=0.6$, multiplier
$\lambda=0.65$). Table~\ref{tab:nonlin} reports PPM, the averaged
reflection, and AA(3): the universal bound $1/(k+1)$ holds everywhere, and
on this particular instance PPM is \emph{faster} than the averaged
reflection---the ball constraint removes the near-fixed blocks that make
the extremal instance hard (the reflection orbit leaves the boundary after
the first step), illustrating Remark~\ref{rem:genuine}: the worst case
over $\mathfrak F$ is linear, but individual nonlinear instances can be
easier. For $C=V$ a subspace (first $2m$ coordinates, $m=15$), the
compressed resolvent of Proposition~\ref{prop:subspace} is verified to be
skew-adjoint-shaped: $S_V$ is exactly skew-symmetric and the eigenvalues
of the compressed resolvent satisfy $\abs{2\zeta-1}=1$ to
$2\cdot10^{-15}$---the family $\{S+N_V\}$ stays inside the skew-adjoint
class---and the averaged reflection respects $1/(k+1)$ throughout.

\begin{table}[htbp]
\centering\small
\caption{The nonlinear instance $M=S+N_C$ of Experiment~5 ($K=29$, $C$ the
ball of radius $0.6$): residuals $\norm{r_k}$ against the universal bound
$1/(k+1)$. All values verified against the exact variational solution.}
\label{tab:nonlin}
\begin{tabular}{rcccc}
\toprule
$k$ & PPM & averaged reflection & AA(3) & bound $1/(k+1)$\\
\midrule
$1$  & $8.73\cdot10^{-2}$ & $2.94\cdot10^{-1}$ & $2.94\cdot10^{-1}$ & $5.00\cdot10^{-1}$\\
$2$  & $6.27\cdot10^{-2}$ & $1.79\cdot10^{-1}$ & $1.74\cdot10^{-1}$ & $3.33\cdot10^{-1}$\\
$5$  & $4.48\cdot10^{-2}$ & $9.82\cdot10^{-2}$ & $8.49\cdot10^{-2}$ & $1.67\cdot10^{-1}$\\
$10$ & $3.50\cdot10^{-2}$ & $5.85\cdot10^{-2}$ & $5.21\cdot10^{-2}$ & $9.09\cdot10^{-2}$\\
$15$ & $2.95\cdot10^{-2}$ & $4.19\cdot10^{-2}$ & $3.83\cdot10^{-2}$ & $6.25\cdot10^{-2}$\\
$20$ & $2.58\cdot10^{-2}$ & $3.30\cdot10^{-2}$ & $3.02\cdot10^{-2}$ & $4.76\cdot10^{-2}$\\
$25$ & $2.31\cdot10^{-2}$ & $2.75\cdot10^{-2}$ & $2.49\cdot10^{-2}$ & $3.85\cdot10^{-2}$\\
$29$ & $2.13\cdot10^{-2}$ & $2.42\cdot10^{-2}$ & $2.16\cdot10^{-2}$ & $3.33\cdot10^{-2}$\\
\bottomrule
\end{tabular}
\end{table}

\subsection{Experiment 6: safeguarding --- acceptance profile and linear
safety}\label{sec:exp4}

(i) On $4000$ random linear monotone systems (random symmetric psd plus
skew parts, dimensions $2$--$8$) and $400$ random projected-nonlinear
systems $J=P_C\circ(I+A)^{-1}$ with $A$ monotone, we tracked the descent
ratio $\norm{r(\hat y)}/\norm{r_k}$ of raw AA($2$): the maximum observed
was $1.000$ (never above $1$), as Corollary~\ref{cor:linsafe} mandates in
the linear case and as is empirically typical even nonlinearly on monotone
problems (Remark~\ref{rem:zob}). (ii) On the three-block spectrum
$\{0.9,0.3,0.05\}$ (floored, escape regime), certified AA(3) reaches
$\norm{r_{120}}=1.90\cdot10^{-2}$ against PPM's $2.14\cdot10^{-2}$ at
equal oracle cost ($240$ evaluations). On both this instance and the dense
ladder of Experiment~1 the acceptance rate is $1.00$---the safeguard never
fires on linear problems, exactly as Corollary~\ref{cor:linsafe}
predicts---the certified output equals raw AA and dominates PPM at equal
oracle cost, and the floor $\norm{r(b_k)}\le\norm{r^c_k}$ of
Theorem~\ref{thm:cert} holds pointwise.

\subsection{Experiment 7: H\"older growth rates}\label{sec:exp5}

PPM on $f(x)=\abs{x}^\gamma/\gamma$ in $\R$ (resolvent by bisection to
machine precision), $40\,000$ iterations, slopes on
$k\in[2\cdot10^4,4\cdot10^4]$:

\begin{table}[htbp]
\centering\small
\caption{H\"older growth, Experiment~7 vs.\ Theorem~\ref{thm:growth}(a):
predicted exponents $-1/(\gamma-2)$ for $d_k$ and $-(\gamma-1)/(\gamma-2)$
for $\norm{r_k}$.}
\label{tab:growth}
\begin{tabular}{cccccc}
\toprule
$\gamma$ & $q$ & slope $d_k$ & predicted & slope $\norm{r_k}$ & predicted\\
\midrule
$4$ & $4$ & $-0.500$ & $-0.500$ & $-1.500$ & $-1.500$\\
$6$ & $6$ & $-0.250$ & $-0.250$ & $-1.250$ & $-1.250$\\
\bottomrule
\end{tabular}
\end{table}

\subsection{Experiment 8: averaged reflection and the Fej\'er
constant}\label{sec:exp6}

(i) The Fej\'er residual polynomial attains
$\max_{\abs{u}=1}\abs{(1-u)q_K^\star(u)}=2/(K+1)$ exactly:
$0.181818$ vs.\ $2/11$ at $K=10$, $0.019802$ vs.\ $2/101$ at $K=100$
(Table~\ref{tab:fejer}). (ii) On a random \emph{nonlinear} monotone
example ($J=P_{[-0.3,0.3]^4}\circ(I+A)^{-1}$, $A$ monotone, $K=60$), the
averaged-reflection estimator \eqref{eq:avredef} gives
$\norm{r(\hat y_K)}=0.0603\le d_0/(K+1)=0.1283$: the telescoping bound of
Theorem~\ref{thm:avref} holds beyond the linear theory.

\begin{table}[htbp]
\centering\small
\caption{Fej\'er extremal values (Experiment~8).}
\label{tab:fejer}
\begin{tabular}{ccc}
\toprule
$K$ & $\max_{\abs{u}=1}\abs{(1-u)q_K^\star(u)}$ & $2/(K+1)$\\
\midrule
$10$ & $0.181818$ & $0.181818$\\
$100$ & $0.019802$ & $0.019802$\\
\bottomrule
\end{tabular}
\end{table}

\section{Discussion and open problems}\label{sec:discussion}

\textbf{What the map says.} The complexity of residual-polynomial
acceleration of PPM is now pinned exactly on three levels. Uniformly over
all maximal monotone operators, the answer is exactly $d_0/(K+1)$,
achieved by the elementary averaged-reflection estimator
\eqref{eq:avredef} and matched by the extremal instance of
Definition~\ref{def:extremal} against every polynomial method, adaptive or
not (Theorem~\ref{thm:exactminimax}, Corollary~\ref{cor:exactV}); on the
hardest instance the optimal adaptive scheme is the non-adaptive Fej\'er
kernel itself. The obstruction is spectral, not algorithmic: the
adversarial mass concentrates at the critical scale
$\dist(\zeta,1)\asymp1/K$ with the critical mass exponent $2$
(Remark~\ref{rem:artifact}). Structure helps exactly when the spectrum
clears the critical scale: $sK\to\infty$ opens the Jackson/Chebyshev
escape route (Theorem~\ref{thm:jackson}), and $sK=O(1)$ closes it with the
exact barrier $1/(K+1)$ (Theorem~\ref{thm:barrier}); both sides extend to
normal operators (Theorem~\ref{thm:normal}), and the exact value extends
to the nonlinear family $S+N_C$ (Theorem~\ref{thm:nonlinminimax}).
Safeguarding costs a factor two, and only on nonlinear problems
(Corollaries~\ref{cor:linsafe} and~\ref{cor:optimal2}).

\textbf{Open problems.}
(1) \emph{Greedy vs.\ minimax on dense spectra}: empirically AA tracks
$\Theta(1/\sqrt{k})$ with a small constant gain on dense ladders, far from
the instance minimax; is there a proved per-step suboptimality statement
for greedy residual minimization, or a better adaptive rule closing the
gap toward the instance minimax?
(2) \emph{Memory tradeoffs}: on the extremal instance, limited memory
provably loses (AA(3) sits above the per-step floor in
Figure~\ref{fig:extremal}); a sharp memory-versus-rate lower bound,
complementing the Chebyshev-number upper bounds $\tau_m$ of the arc, is
open.
(3) \emph{Algorithmic escape}: the Jackson polynomial of
Theorem~\ref{thm:jackson} is tuned to the floor $s$; an adaptive
diagnostic that detects the floor online and switches between the Fej\'er
and Jackson regimes without knowing $s$ would make the escape side
algorithmic.
(4) \emph{Non-normal and genuinely nonlinear sharpness}: the exact value
$d_0/(K+1)$ is proved for skew-adjoint (hence normal) instances and, by
inclusion, for the family $S+N_C$; for non-normal linear operators the
escape side degrades by a Kreiss factor (Remark~\ref{rem:nonnormal}), and
for genuinely nonlinear $J$ the polynomial reduction fails
(Remark~\ref{rem:genuine}); sharp constants in both directions are open.
(5) \emph{Nonlinear polynomial theory}: the reduction of
Lemma~\ref{lem:polyred} is linear; nothing comparable is known for
genuinely nonlinear $J$, where the residual of a combination is not the
combination of residuals (Remark~\ref{rem:nonlin}).

\textbf{On the stochastic setting.} A stochastic extension is \emph{not} a
routine modification, and we prefer to say so plainly rather than to claim
unsupported rates. With noisy resolvents
$\widetilde J(y)=J(y)+\xi$, the AA candidate
$\hat y=\sum_i\gamma_i\widetilde x_i$ inherits the noise
$\sum_i\gamma_i\xi_i$ with amplification $\sum_i\abs{\gamma_i}$---the
Lebesgue constant of the window---which is unbounded in general and large
exactly when AA extrapolates aggressively (the regime of
Proposition~\ref{prop:nohist}). Consequently, no variance-reduction-free
stochastic rate can hold without controlling $\sum_i\abs{\gamma_i}$ (e.g.\
by coefficient-capped or ridge-regularized AA), and the acceptance
certificate of Theorem~\ref{thm:cert} must itself be replaced by a
high-probability variant. We regard a variance-aware certified stochastic
AA-PPM as an open problem, not a corollary.

\appendix
\section{A sequence lemma}\label{app:seq}

\begin{lemma}\label{lem:seq}
Let $u_k\ge0$ satisfy $u_k-u_{k+1}\ge c\,u_{k+1}^{\alpha}$ with $c>0$,
$\alpha>1$. Then
\[
u_k\ \le\ \big(c(\alpha-1)\,k\big)^{-1/(\alpha-1)}
\qquad\text{for all }k\ge1 .
\]
\end{lemma}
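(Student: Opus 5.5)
The plan is to linearize the recursion through the substitution $v_k=u_k^{-(\alpha-1)}$. The target inequality $u_k\le\big(c(\alpha-1)k\big)^{-1/(\alpha-1)}$ is equivalent to $v_k\ge c(\alpha-1)\,k$. If some $u_{k_0}=0$, the hypothesis forces $u_k=0$ for all $k\ge k_0$, and there is nothing to prove. Otherwise the hypothesis makes $(u_k)$ nonincreasing and positive, since $u_k\ge u_{k+1}+c\,u_{k+1}^\alpha\ge u_{k+1}$. I would then telescope:
\[
v_{k+1}-v_k=(\alpha-1)\int_{u_{k+1}}^{u_k}t^{-\alpha}\,dt .
\]
The goal is to show that each increment is at least $c(\alpha-1)$. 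Writing $a=u_{k+1}$ and $b=u_k$, the hypothesis gives $b-a\ge c\,a^\alpha$. The natural estimate bounds the integrand below by its value at the right endpoint:
\[
\int_a^b t^{-\alpha}\,dt\ \ge\ (b-a)\,b^{-\alpha}\ \ge\ c\,(a/b)^\alpha .
\]
When $a/b$ is bounded below, say $a\ge b/2$, this gives an increment of at least $c(\alpha-1)2^{-\alpha}$.

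In the complementary case $a<b/2$, I would instead use $\int_a^b t^{-\alpha}dt\ge\big(a^{1-\alpha}-b^{1-\alpha}\big)/(\alpha-1)$ directly. Since $b>2a$, this yields $v_{k+1}\ge 2^{\alpha-1}v_k$, so $v$ grows geometrically in such steps. Combining the two cases gives $v_k\ge v_0+c'k$ or faster growth, and hence the claimed rate.

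The main obstacle is the \emph{exact} constant $c(\alpha-1)$ with no dependence on $u_0$. I expect this to fail as literally stated, because the first step is not controlled by the hypothesis. Take $\alpha=2$, $c=1$, $u_1=10$ and $u_0=110$. Then $u_0-u_1=100=c\,u_1^2$, so the hypothesis holds, yet the claim at $k=1$ would require $u_1\le1$. The geometric case above is exactly where the sharp constant is lost: a single step can start from an arbitrarily large $u_k$.

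So the proof I would actually write establishes the following corrected version. Either assume $c(\alpha-1)u_0^{\alpha-1}$ is small, or accept the constant $2^{-\alpha}c(\alpha-1)$ in place of $c(\alpha-1)$. In both cases the two-case increment argument above gives $v_k\ge \kappa k$ with an explicit $\kappa>0$, hence $u_k\le(\kappa k)^{-1/(\alpha-1)}$.

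This corrected form suffices for its only use in the paper. In the proof of Theorem~\ref{thm:growth}(a), Lemma~\ref{lem:seq} is applied with $\alpha=q/2$ and $c=2\mu$, and only the rate $u_k=O(k^{-2/(q-2)})$ is needed there. I would flag this correction to the stated constant explicitly in the text.
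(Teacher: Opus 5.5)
Your diagnosis is correct: the lemma is false as stated, and your example ($\alpha=2$, $c=1$, $u_0=110$, $u_1=10$) is a valid counterexample. The paper's one-line proof does not rescue it. Convexity of $\phi(u)=u^{1-\alpha}$ gives $\phi(u_k)\ge\phi(u_{k+1})+\phi'(u_{k+1})(u_k-u_{k+1})$, that is, $\phi(u_{k+1})-\phi(u_k)\le(\alpha-1)u_{k+1}^{-\alpha}(u_k-u_{k+1})$. This is the reverse of the inequality the paper telescopes.

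The sharp constant $c(\alpha-1)$ also fails when $u_0$ is small. Take the equality sequence $u_k=u_{k+1}+u_{k+1}^2$. Then $1/u_{k+1}-1/u_k=1/(1+u_{k+1})$, so $1/u_k=1/u_0+k-\sum_{j=1}^k u_j/(1+u_j)$. Since $u_j\ge 1/(1/u_0+j)$, the subtracted sum diverges (at least like $\log k/(1+u_0)$). Hence $u_k>1/k$ for all large $k$, whatever $u_0>0$. So if your first alternative (``assume $c(\alpha-1)u_0^{\alpha-1}$ small'') was meant to restore the constant $c(\alpha-1)$, it cannot.

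The genuine gap is in your repair. You claim the two cases combine to $v_k\ge v_0+c'k$ with $c'$ independent of $u_0$, and this is false. In the case $a<b/2$ you only get $v_{k+1}-v_k\ge(2^{\alpha-1}-1)v_k$. That additive gain is tiny when $v_k$ is tiny, i.e.\ when $u_k$ is huge. Your own example refutes the unconditional constant $2^{-\alpha}c(\alpha-1)$, since it would force $u_1\le 4$. More generally, no $u_0$-independent bound exists at any fixed $k$: pick $u_k$ arbitrarily large and build $u_{k-1},\dots,u_0$ backwards by $u_j=u_{j+1}+c\,u_{j+1}^{\alpha}$.

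The fix is one line. Since $(u_k)$ is nonincreasing, $v_k\ge v_0=u_0^{1-\alpha}$. So every increment is at least $\kappa:=\min\{2^{-\alpha}c(\alpha-1),\,(2^{\alpha-1}-1)u_0^{1-\alpha}\}$, giving $u_k\le(\kappa k)^{-1/(\alpha-1)}$. Your smallness branch is exactly the regime where the minimum is the first term. A $u_0$-dependent constant is harmless in Theorem~\ref{thm:growth}(a), where $u_0=d_0^2$ and only $u_k=O(k^{-2/(q-2)})$ is used.
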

\begin{proof}
The function $\phi(u)=u^{1-\alpha}$ is convex and decreasing with
$\phi'(u)=(1-\alpha)u^{-\alpha}$. Hence
\[
\phi(u_{k+1})-\phi(u_k)
=(u_{k+1}^{1-\alpha}-u_k^{1-\alpha})
\ge (1-\alpha)u_{k+1}^{-\alpha}(u_{k+1}-u_k)
\ge (\alpha-1)u_{k+1}^{-\alpha}\,c\,u_{k+1}^{\alpha}
=c(\alpha-1).
\]
Telescoping,
$u_k^{1-\alpha}\ge u_0^{1-\alpha}+c(\alpha-1)k\ge c(\alpha-1)k$, and
inverting gives the claim.
\end{proof}

\end{document}